\newcommand{\bsm}{\begin{smallmatrix}}
\newcommand{\esm}{\end{smallmatrix}}
\newcommand{\bbm}{\begin{matrix}}
\newcommand{\ebm}{\end{matrix}}
\newcommand{\rt}{\rightarrow}
\newcommand{\lrt}{\longrightarrow}
\newcommand{\st}{\stackrel}
\newcommand{\la}{\lambda}
\newcommand{\La}{\Lambda}
\newcommand{\mmod}{{\mathrm{mod}}\mbox{-}}
\newcommand{\CA}{\mathscr{A} }
\newcommand{\CC}{\mathscr{C} }
\newcommand{\DD}{\mathscr{D} }
\newcommand{\CF}{\mathscr{F} }
\newcommand{\CP}{\mathscr{P }}
\newcommand{\CS}{\mathscr{S} }
\newcommand{\CT}{\mathscr{T} }
\newcommand{\CM}{\mathscr{M} }
\newcommand{\CH}{\mathscr{H} }
\newcommand{\CE}{\mathscr{E} }
\newcommand{\CV}{\mathcal{V} }
\newcommand{\Hom}{\rm{Hom}}
\newcommand{\Ext}{\rm{Ext}}
\newtheorem{theorem}{Theorem}[section]
\newtheorem{corollary}[theorem]{Corollary}
\newtheorem{lemma}[theorem]{Lemma}
\newtheorem{proposition}[theorem]{Proposition}
\newtheorem{notation}[theorem]{Notation}
\theoremstyle{definition}
\newtheorem{definition}[theorem]{Definition}
\newtheorem{example}[theorem]{Example}
\newtheorem{construction}[theorem]{Construction}
\newtheorem{remark}[theorem]{Remark}
\theoremstyle{plain}
\theoremstyle{definition}
\numberwithin{equation}{section}
\newtheorem*{theorem a*}{Theorem A}
\newtheorem*{theorem b*}{Theorem B}
\newtheorem*{theorem c*}{Theorem C}
\newtheorem*{theorem d*}{Theorem D}
\newtheorem*{theorem e*}{Theorem E}
\begin{document}

\title[$r$ICE-closed subcategories]{$r$ICE-closed subcategories induced by the   morphism category of projective modules }

\author{ Rasool Hafezi }
\dedicatory{}
\address{School of Mathematics and Statistics,
Nanjing University of Information Science \& Technology, Nanjing, Jiangsu 210044, P.\,R. China}
\email{hafezi@nuist.edu.cn}
\author{Alireza Nasr-Isfahani}
\address{Department of Pure Mathematics\\
Faculty of Mathematics and Statistics\\
University of Isfahan\\
P.O. Box: 81746-73441, Isfahan, Iran\\ and School of Mathematics, Institute for Research in Fundamental Sciences (IPM), P.O. Box: 19395-5746, Tehran, Iran}
\email{nasr$_{-}$a@sci.ui.ac.ir / nasr@ipm.ir}
\author{Jiaqun Wei}
\address{School of Mathematical Science,
Zhejiang Normal University, Jinhua 321004, China}
\email{weijiaqun5479@zjnu.edu.cn}
\subjclass[2020]{18A25, 18E40, 16G20, 16G99.}

\keywords{Morphism category of projective modules, tilting object,
(support) $\tau$-tilting module, ICE-closed subcategory, rICE-subcategory,  rigid object, $\tau$-rigid module.}

\begin{abstract}
Let $\Lambda$ be an Artin $R$-algebra, and  ${\rm proj}\mbox{-}\Lambda$ denotes the category of all finitely generated projective $\Lambda$-modules. Define $\CP(\Lambda) := {\rm Mor}({\rm proj}\mbox{-}\Lambda)$. Due to the favorable homological properties of $\CP(\Lambda)$, we initially examine several noteworthy objects and subcategories of $\CP(\Lambda)$, subsequently relating these findings to $\mmod \Lambda$. Following our examination of Image-Cokernel-Extension closed (hereafter referred to as ICE-closed) subcategories of $\CP(\Lambda)$, among other bijections, we demonstrate a bijection between rigid objects in $\CP(\Lambda)$ and ICE-closed subcategories of $\CP(\Lambda)$ with enough Ext-projectives. In order to translate the concept of ICE-closed subcategory from $\CP(\Lambda)$ to $\mmod \Lambda$, it is necessary to introduce the framework of rICE-closed subcategories of $\mmod \Lambda$. We then establish a bijection between $\tau$-rigid modules in $\mmod \Lambda$ and rICE-closed subcategories of $\mmod \Lambda$ that possess an rExt-progenerator. This is a generalization of a bijection given by Enomoto for hereditary algebras. Our morphism approach improves a bijection given by Buan and Zhou by introducing r-cotorsion-torsion triples. We conclude our paper with further applications for $\tau$-tilting theory.
\end{abstract}

\maketitle

%\tableofcontents

\section{Introduction}\label{Introduction}

Let $R$ be a commutative Artinian ring and $\Lambda$ be an Artin $R$-algebra. 
The morphism category $\mathrm{Mor}(\Lambda)$ consists of two-term complexes 
of $\Lambda$-modules concentrated in homological degrees $1$ and $0$, with 
morphisms given by morphisms of complexes. It is known that the category 
$\mathrm{Mor}(\Lambda)$ is equivalent to the category of left modules over 
the lower triangular matrix algebra $T_2(\Lambda)$ (for more details see 
\cite[Chapter I.4]{Aus} and \cite[Chapter III.2]{ARS}).  Let ${\rm proj}\mbox{-}\La$ be the category of all finitely generated projective $\La$-modules and set $\CP(\La):={\rm Mor}({\rm proj}\mbox{-}\La).$ Then $\CP(\La)$ is a full and extension-closed subcategory of 
$\mathrm{Mor}(\Lambda)$ which has good homological properties 
(see \cite{Ba}). The category $\CP(\La)$ is naturally equipped with the 
structure of an exact category induced by the exact structure of 
$\mathrm{Mor}(\Lambda)$, where conflations are defined as degree-wise 
exact sequences. Moreover, $\CP(\La)$ is hereditary with enough 
projective and injective objects. 

Recent work of the first and third authors shows that the representation 
theory of $\mmod \La$ and $\CP(\La)$ are closely related. By \cite{HW}, 
we know that $\CP(\La)$ has almost split sequences. The first and third 
authors in \cite{HW} showed that the functor $\mathsf{Cok}:\CP(\La) \rt \mmod \La$
is full, dense, and objective, which induces an equivalence $\overline{\CP(\La)} \cong \mmod \La,$ where $\overline{\CP(\La)}$ is the injectively stable category of 
$\CP(\La)$. 

By taking $\CC={\rm proj}\mbox{-}\La$ in \cite[Theorem 4.2]{HE}, we infer 
that the Auslander--Reiten quiver of $\La$ is a full subquiver of the 
Auslander--Reiten quiver of $\CP(\La)$. This shows that the difference 
between their Auslander--Reiten quivers consists only of indecomposable 
injective objects in $\CP(\La)$, which are completely characterized. 
Therefore, the good homological properties of $\CP(\La)$ suggest that one 
may translate information from $\mmod \La$ to $\CP(\La)$, analyze this 
information using the homological properties of $\CP(\La)$, and then 
transfer the results back to $\mmod \La$. A similar observation was made in Auslander's work, where he considered  the category $((\mmod \La)^{\rm op}, \mathcal{A}{\rm b})$
of  finitely presented contravariantly functors from $\mmod \La$ to the 
category of abelian groups $\mathcal{A}{\rm b}$, which has global dimension $2$, 
to study certain properties of $\mmod \La$. In contrast, $\CP(\La)$ is an 
exact category of global dimension one, so from a homological perspective 
it is ``one dimension smaller''. 

In this paper, we plan to use $\CP(\La)$ as a tool for studying 
$\tau$-tilting theory in $\mmod \La$, instead of using the homotopy 
category of two-term complexes with projective components, as is commonly 
done in the literature.

Tilting theory is one of the most powerful tools in the representation theory of finite-dimensional algebras. Brenner and Butler introduced tilting modules \cite{BB} (see also \cite{HR}). It is well known that an almost complete tilting module has at most two complements \cite{RS, Un}, and Happel and Unger \cite{HU} showed that there are exactly two complements in the faithful case. The idea of two complements is closely related to mutation in cluster algebras \cite{FZ}. Later, Derksen and Fei \cite{DF} observed a class of representations with similar behavior. Adachi, Iyama, and Reiten introduced $\tau$-tilting modules and developed their theory in \cite{AIR}. Nowadays, $\tau$-tilting theory has become an active area in the representation theory of finite-dimensional algebras.

On the other hand, certain subcategories of finitely generated modules also play an important role in representation theory. In particular, torsion classes and torsion-free classes are closely related to tilting theory. Moreover, there are several bijections between classes of subcategories and well-structured modules (see \cite{IT} and references therein). For example, Adachi, Iyama, and Reiten \cite{AIR} established a bijection between functorially finite torsion classes and isomorphism classes of basic support $\tau$-tilting modules.

ICE-closed subcategories, that is, subcategories closed under images, cokernels, and extensions, were introduced by Enomoto in \cite{En} as a common generalization of torsion classes and wide subcategories. In \cite{En}, for a Dynkin quiver $Q$, a bijection between ICE-closed subcategories of $\mathrm{mod}\,kQ$ and rigid $kQ$-modules was established, extending earlier bijections by Adachi, Iyama, and Reiten, and by Ingalls and Thomas \cite{IT}. 

Later, Enomoto and Sakai generalized this result in two directions. First, in \cite{ES2}, they introduced twin rigid modules to study IE-closed subcategories, namely subcategories closed under images and extensions. In the hereditary case, they obtained a bijection between functorially finite IE-closed subcategories and isomorphism classes of twin rigid modules.

Second, in \cite{ES}, they considered ICE-closed subcategories that are functorially finite torsion classes in functorially finite wide subcategories, called \emph{doubly functorially finite}. Over an Artin algebra, they established a bijection between such subcategories and isomorphism classes of basic wide $\tau$-tilting modules, where a wide $\tau$-tilting module means a $\tau_{\mathcal W}$-tilting module in a functorially finite wide subcategory $\mathcal W$ (see \cite[Definition 4.11]{ES}).

In this paper, we study certain objects and subcategories in $\CP(\La)$ and establish a series of bijections between them. We then transfer these results to $\mmod \La$. In this way, we obtain new results for $\mmod \La$ and show that several known results can be derived from $\CP(\La)$. In particular, this leads to the notion of rICE-closed subcategories in module categories.

Sauter \cite{S} introduced tilting objects and tilting subcategories in exact categories with enough projectives (see Definition \ref{Def. tilting. CP}). Pan, Zhang, and Zhu \cite{PZZ} defined and studied (support) $\tau$-tilting objects and subcategories in exact categories. More generally, the authors in \cite{GNP1} studied hereditary extriangulated categories and developed mutation theory for silting objects in certain hereditary extriangulated categories, called $0$-Auslander extriangulated categories. Typical examples include homotopy categories of two-term complexes of projectives. As discussed in \cite{HW}, another source of $0$-Auslander extriangulated categories is the exact category $\CP(\La)$. Since silting objects in $\CP(\La)$ coincide with tilting objects, the theory developed in \cite{GNP1} applies to the hereditary exact category $\CP(\La)$.

One of our strategies is to find analogues of results known for module categories over hereditary algebras in the hereditary exact category $\CP(\La)$. Our first main result is an analogue of \cite[Theorem 2.3]{En}, which gives, for hereditary algebras, a bijection between rigid modules and ICE-closed subcategories with enough $\Ext$-projectives.

\medskip

\textbf{Theorem A.} (Theorem \ref{Thm. ICE-rigid-Mor}) {\it Let $\Lambda$ be an  Artin algebra. Then there exist mutually inverse bijection between the following two sets:
\begin{itemize}
    \item [$(1)$] isomorphism class of basic rigid objects in $\CP(\La);$
    \item [$(2)$] ICE-closed subcategories with enough $\Ext$-projectives, see Definition \ref{Def. ICE-Mor-Proj}. 
\end{itemize}
 }

ICE-closed subcategories in $\CP(\Lambda)$ are defined similarly to those in $\mmod \Lambda$. 
However, in this setting we should work with admissible morphisms, namely morphisms 
which factor as an inflation followed by a deflation. Consequently, the cokernel, 
kernel, and image of an admissible morphism in the exact category $\CP(\Lambda)$ 
are well-defined. Rigid objects in $\CP(\Lambda)$ are defined in the usual way, 
namely as objects having no self-extensions. 

The map from $(1)$ to $(2)$ in the above theorem is given by sending a rigid object 
$\mathsf{M}$ to $\mathsf{Cok}\,\mathsf{M}$, the subcategory of $\CP(\Lambda)$ 
consisting of cokernels of admissible morphisms in $\mathsf{add} \mathsf{M}$.

One advantage of working with $\CP(\Lambda)$ is the existence of a duality between 
$\CP(\Lambda)$ and $\CP(\Lambda^{\mathrm{op}})$, obtained by applying 
$(-)^*=\Hom_{\Lambda}(-,\Lambda)$ componentwise to objects in $\CP(\Lambda)$. 
This yields a dual version of Theorem~A stated in 
\ref{Thm. IKE-rigid-Mor}. In the dual setting, we deal with IKE-closed 
subcategories in $\CP(\Lambda)$, that is, subcategories closed under  
admissible images, admissible kernels and extensions. 

According to Theorem~A and its dual, we observe that for a pair of rigid objects 
$(\mathsf{P},\mathsf{I})$, the subcategory 
$\mathsf{Cok}\,\mathsf{P}\cap \mathsf{Ker}\,\mathsf{I}$ 
is an IE-closed subcategory, closed under admissible images and extensions. 
Inspired by twin modules defined in \cite{ES2}, we introduce in 
Definition~\ref{def:twin-rigid} the notion of twin objects, which are pairs 
of rigid objects satisfying certain homological conditions. We then use them 
to classify IE-subcategories as follows.

\medskip

\textbf{Theorem B.} (Theorem \ref{thm:main-twin-IEclosed123})
 Let $\La$ be an Artin algebra. Then there is mutually inverse   bijections between the following two sets:
  \begin{enumerate}
    \item The set of  equivalent classes of IE-closed subcategories $\CC$ of $\CP(\La)$ which  admit an $\Ext$-progenerator  and an $\Ext$-injective cogenerator.
    \item The set of isomorphism classes of basic twin rigid objects $(\mathsf{P}, \mathsf{I})$.
  \end{enumerate}

We should point out a similar version of Theorem B is proved in \cite[Theorem 2.14]{ES2} over hereditary algebras.

Next, we aim to develop a version of the above results in $\CP(\Lambda)$ for the 
module category over an  Artin algebra. In this paper, we mainly focus on 
Theorem~A. Although Theorem~B may lead to interesting notions in the module category, 
its investigation requires an independent study. According to Theorem~A, we seek to 
identify which modules arise as images under the cokernel functor applied to rigid 
objects. 

We will show in Lemma~\ref{rigid-tau-rigid} that rigid objects in $\CP(\Lambda)$ are closely related 
to $\tau$-rigid modules in $\mmod \Lambda$. More generally, we prove that there is 
a bijection between tilting objects in $\CP(\Lambda)$ and support $\tau$-tilting modules 
in $\mmod \Lambda$. This bijection explains how $\tau$-tilting theory originates 
from tilting theory in the morphism category of projective modules. One may further 
pursue these ideas inside $\CP(\Lambda)$. For instance, the bijection given in 
\cite[Theorem~2.14]{AIR} between support $\tau$-tilting modules in $\mmod \Lambda$ 
and those in $\mmod \Lambda^{\rm op}$ is induced by the duality  $
(-)^*:\CP(\Lambda)\longrightarrow \CP(\Lambda^{\rm op}).$
Moreover, the bijection between support $\tau$-tilting modules and support 
$\tau^{-1}$-tilting modules can also be reproved using our morphism approach; 
please see the preliminary version of this paper \cite{HNW} for further details. Also, mutation 
of support $\tau$-tilting modules can be formulated via mutation of tilting (or 
silting) objects in $\CP(\Lambda)$, based on \cite{GNP1}. We also note that in 
\cite{MS}, the morphism category of all modules, rather than only projective modules 
as in the present paper, is used to translate silting modules into tilting objects 
for the study of universal localization.

To achieve our goal, we need to analyze ICE-closed subcategories of $\CP(\Lambda)$ and then 
transfer the results to $\mmod \Lambda$. This leads to the notion of 
{\em rICE-closed subcategories}. In the crucial 
Construction~\ref{Construction5}, we explain how a right exact sequence in 
$\mmod \Lambda$ is induced by a short exact sequence in $\CP(\Lambda)$.  Let $(A_3,J)$ be the bound quiver  $
A_3: v_2 \xrightarrow{a} v_1 \xrightarrow{b} v_0$
with relation $J$ generated by $ab$, and let ${\rm rep}(A_3,J,\Lambda)$ denote 
the category of representations of $(A_3,J)$ by $\Lambda$-modules and 
$\Lambda$-homomorphisms. Viewing a right exact sequence as an object in 
${\rm rep}(A_3,J,\Lambda)$, the above construction produces a (minimal) projective 
presentation of the right exact sequence in the abelian category 
${\rm rep}(A_3,J,\Lambda)$. Taking the kernel of this minimal projective presentation 
yields information about the second syzygies of right exact sequences in 
${\rm rep}(A_3,J,\Lambda)$.  This leads to Definition~\ref{def. typ0-1-P-right-exact}, where we introduce two  types of right exact sequences, called $P$-right exact sequences of type $0$ and  type $1$. If both conditions are satisfied, we call the sequence a $P$-right exact sequence. In Definition~\ref{Def. ICE-Mor-Proj}, we formally introduce rICE-closed subcategories, which are roughly defined as subcategories closed under  images and cokernels of 
$P$-right exact sequences, as well as extensions of $P$-right exact sequences. Over a hereditary algebra, rICE-closed subcategories 
coincide with ICE-closed subcategories.

We are now ready to state a version of Enomoto's theorem 
\cite[Theorem~2.3]{En} for general Artin algebras.

\textbf{Theorem C.} (Theorem \ref{mainTheorem5})
Let $\La$ be an Artin algebra.
Then there exist mutually inverse bijections between the following two sets:
\begin{itemize}
    \item[(1)] Isomorphism classes of $\tau$-rigid modules;
    \item[(2)]  rICE-closed subcategories of $\mmod \La$ that admit an
    $r\Ext$-progenerator.
\end{itemize}

According to \cite[Theorem~3.8]{PZZ} or \cite[Theorem~5.2]{GNP1}, we obtain in 
Proposition~\ref{prop. bij-twin-tilting-cotortor} a bijection between tilting objects 
and cotorsion--torsion triples in $\CP(\Lambda)$. Recall that a cotorsion--torsion 
triple consists of a cotorsion pair together with a torsion pair. 

In view of the setting of our paper, a cotorsion--torsion triple in $\CP(\Lambda)$ 
corresponds to an r-cotorsion--torsion triple in $\mmod \Lambda$, consisting of an 
r-cotorsion pair and a torsion pair; see 
Definition~\ref{Def. r-cot-r-tor-triple2}. We thus obtain the following bijection.

\medskip

\noindent
\textbf{Theorem D.} (Theorem~\ref{Thm. r-cot-r-torsion-supp})
There exists a bijection between the following sets:
\begin{itemize}
\item[$(1)$] Isomorphism classes of basic support $\tau$-tilting modules;
\item[$(2)$] r-cotorsion--torsion triples.
\end{itemize}

\medskip

In \cite{BZ}, Buan and Zhou introduced the notion of a left weak cotorsion pair 
(for short, lw-cotorsion pair), consisting of a left weak cotorsion pair together 
with a torsion pair; see Definition~\ref{def. lw-cotrsion-pair}. They proved a 
bijection between lw-cotorsion triples and support $\tau$-tilting modules. 
In contrast to an lw-cotorsion pair, the subcategories involved in an r-cotorsion 
pair determine each other.

The paper is organized as follows. In Section~2, we provide the necessary background 
needed throughout the paper. In Section~3, we introduce ICE-closed, IKE-closed, and 
IE-closed subcategories in $\CP(\Lambda)$ and prove the bijections stated in 
Theorems~A and~B.  In Section~4, we study cotorsion--torsion triples and their duals (torsion--cotorsion  triples) in $\CP(\Lambda)$, and establish a correspondence between these triples and 
tilting objects in $\CP(\Lambda)$. Section~5 builds a bridge between our results in 
the morphism category $\CP(\Lambda)$ and the module category $\mmod \Lambda$. In this 
section, we introduce $P$-right exact sequences, which form the main ingredient of 
our notion of rICE-closed subcategories defined in the next section.  In Section~6, we prove Theorem~C. In the final section, we prove Theorem~D and establish a bijection between tilting objects in $\CP(\Lambda)$ and support $\tau$-rigid modules in $\mmod \Lambda$. We conclude the paper with further applications, in particular Theorem~\ref{final}, which illustrates the role of right exact sequences in support $\tau$-tilting theory.

Throughout this paper, all subcategories are assumed to be full and closed under isomorphisms. An Artin algebra  $\La$ is assumed to be basic.

\section{Preliminaries}\label{Preleminary}
In this section, we collect some necessary background material for the paper.

\subsection{Morphism category}
Let $\CC$ be a category.
The morphism category $\rm{Mor}(\CC)$ of $\CC$ is a category whose objects are morphisms $f:X\rt Y$ in $\CC$, and whose morphisms are given by commutative diagrams. If we regard the morphism $f:X\rt Y$ as an object in $\rm{Mor}(\CC)$, we will usually present it as
 $(X\st{f}\rt Y)$.
A morphism between the objects $(X\st{f}\rt Y)$ and $(X'\st{f'}\rt Y')$ is presented as
$(\sigma_1, \sigma_2): (X\st{f}\rt Y)\rt (X'\st{f'}\rt Y')$,  where $\sigma_1:X\rt X'$ and $\sigma_2:Y\rt Y'$ are morphisms in $\CC$ with $\sigma_2f=f'\sigma_1.$ For breviary,  we let $\CH(\La):={\rm Mor}(\mmod \La)$ and $\CP(\La):={\rm Mor}({\rm proj}\mbox{-}\La)$. If there is no possibility of confusion, we write $\CH$ and $\CP$, respectively. We typically use a sans-serif font style, for example, $\mathsf{A}, \mathsf{B}, \ldots,$ to denote objects in the morphism category.

\subsection{ Exact categories}

 Let $\mathscr{E}=(\mathscr{A},\mathscr{S})$ be an exact category, where $\mathscr{A}$ is an additive category and $\mathscr{S}$ is a class of kernel-cokernel pairs which satisfies axioms of \cite[Definition 2.1]{Bu}. We call an element $X\stackrel{f}\rightarrow Y\stackrel{g}\rightarrow Z$ in $\mathscr{S}$ a conflation, $f$ an inflation and $g$ a deflation. We depict inflations (resp. deflations) by $\rightarrowtail$ (resp. $\twoheadrightarrow$). A morphism $f$ in $\mathscr{E}$ is called {\em admissible} if $f=i\circ d$ for some inflation $i$ and deflation $d$. A sequence of admissible morphisms $X\stackrel{f} \rightarrow Y\stackrel{g} \rightarrow Z$ in $\mathscr{E}$ is {\em exact} at $Y$ if Im$f$=Ker$g$. Here we emphasize that, since $f$ is an admissible morphism, the notions of $\mathsf{Ker}$, $\mathsf{Cok}$, and $\mathsf{Im}$ are well defined. Of course, these notions cannot be defined for an arbitrary morphism in an exact category.

 For an additive subcategory $\mathscr{T}$ of $\CE$, define
\begin{itemize}
\item $\mathsf{Fac}\mathcal{T}=\{X\in \mathcal{E}~|~\exists~T\twoheadrightarrow X,~T\in \mathcal{T}\}$ 
\item $ \mathsf{Sub}\mathcal{T}=\{X\in \mathcal{E}~|~\exists~X\rightarrowtail T,~T\in \mathcal{T}\}$
\item $\mathsf{Cok}\mathcal{T}=\{X\in \mathcal{E}~|~\exists~\text{admissible morphism}~f \in \mathcal{T}~\text{such that}~\mathsf{Cok}f=X \}$
\item $\mathsf{Ker}\mathcal{T}=\{X\in \mathcal{E}~|~\exists~\text{admissible morphism}~f \in \mathcal{T}~\text{such that}~\mathsf{Ker}f=X \}$

\end{itemize}

For an object $X \in \CE$, set  $\mathsf{Fac} X:=\mathsf{Fac}(\mathsf{add} X)$ and $\mathsf{Sub} X:=\mathsf{Sub}(\mathsf{add} X)$, where $\mathsf{add} X$ is the smallest additive subcategory of $\CE$ which contains $X$.

By using projective (or injective) resolutions, as in the abelian case, there are abelian groups ${\rm Ext}^{n}_{\mathscr{E}}(X, Y)$ for $n\geq 0$ and $X, Y\in \mathscr{E}$. We refer to \cite{K22} for details. 
A \emph{full exact subcategory} $\mathscr{C}$ of $\mathscr{E}$ is a full additive subcategory that is closed under extensions.  In this case, $(\mathscr{C},\mathscr{S}|_{\mathscr{C}})$ is an exact category. Note that the subcategory $\mathsf{P}(\mathscr{C})$ consisting of Ext-projective objects in $\mathscr{C}$ (i.e. $X\in \mathscr{C}$ such that ${\rm Ext}_{\mathscr{E}}^{1}(X,\mathscr{C})=0$) is precisely the subcategory of projective objects in the exact category $\mathscr{C}$. In a dual manner, we can define the subcategory $\mathsf{I}(\mathscr{C})$ consisting of Ext-injective objects in $\mathscr{C}$.

\begin{definition}
 Let $\CE$ be an  exact category, and let  $\CC$ be  an extension-closed subcategory of $\CE$.
   An object $P \in \CC$ is an \emph{$\Ext$-progenerator} if $\mathsf{P}(\CC)=\mathsf{add} P$ and $\CC$ has enough $\Ext$-projectives, that is,  for every object $X \in \CC$, there is a conflation
$$Y\rightarrowtail P\twoheadrightarrow X$$
      with $P \in \mathsf{P}(\CC)$ and $Y \in \CC$. Dually, one can define  \emph{enough $\Ext$-inejective} and \emph{$\Ext$-injective cogenerator}  for $\CC.$
\end{definition}
 If $\CE$ has the Krull--Schmidt property, we may assume that the $\Ext$-progenerator or $\Ext$-injective cogenerator  in the above definition are basic, and hence uniquely determined up to isomorphism.  
In this case, we denote these basic objects by $ \mathsf{P}(\CC)$ and $\mathsf{I}(\CC)$, respectively.
\subsection { Auslander-Reiten theory in $\CP(\La)$}\label{Sub. remainder}
Since in this paper we are mostly dealing with the exact category $\CP(\La)$. Here, we specialize to this exact category.  From \cite[Proposition 3.6]{HE}, $\CP(\La)$ has almost split sequences. Therefore, by  \cite{INP}, $\CP(\La)$ has ARS-duality. which 
 is a pair $(\tau_{\CP}, \eta)$ consisting of an equivalence $\tau_{\CP}:\underline{\CP(\La)}\rt \overline{\CP(\La)}~$ together with a bi-natural isomorphism
$$\eta_{\mathsf{X, Y}}:{\Hom}_{\underline{\CP(\La)}}(\mathsf{X}, \mathsf{Y})\simeq D{\Ext}^1_{\CP}(\mathsf{Y}, \tau_{\CP}(\mathsf{X})) \ \ \ \text{for any}  \ \ \mathsf{X}, \mathsf{Y} \in \CP(\La).$$

 Throughout the paper, the cokernel functor  $\mathsf{Cok}:\CP(\La)\rt \mmod \La$ is defined by sending  $(P\st{f}\rt Q)\mapsto \mathsf{Cok}f.$

\begin{theorem} \cite[Theorem 4.2]{HE}\label{Thm-Cok-equiv}
The functor $\mathsf{Cok}:\CP(\La) \rt \mmod \La$ is full, dense and objective.  Thus, the cokernel functor makes the following diagram commute.
\[\xymatrix{
\CP(\La)\ar[r]^{\mathsf{Cok}}\ar[d]^{\pi} & \mmod \La\\
\frac{\CP(\La)}{\CV}\ar[ur]_{\overline{\mathsf{Cok}}}   \\
}\]
\noindent
Here, $\pi$ is the natural quotient map, and $\CV$ is the full subcategory of $\CP(\La)$ generated by all finite direct sums of objects of the forms $(P\st{1}\rt P)$ or $(P\rt 0)$, where  $P$ is a projective module.
\end{theorem}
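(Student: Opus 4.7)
The plan is to verify the three properties (dense, full, objective) and then deduce the factorisation through $\CP(\La)/\CV$ formally. I'll use the notation $\pi_{M}\colon Q\twoheadrightarrow M$ for the canonical surjection $Q\to \mathsf{Cok} f$ attached to an object $(P\st{f}\to Q)$.

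\textbf{Density.} Given any $M\in\mmod\La$, choose a finite projective presentation $P\st{f}\to Q\to M\to 0$; then $(P\st{f}\to Q)\in\CP(\La)$ and $\mathsf{Cok}(P\st{f}\to Q)\cong M$. This uses only that $\La$ is artin so that $\mmod\La$ has enough projectives.

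\textbf{Fullness.} Fix $\mathsf{A}=(P\st{f}\to Q)$ and $\mathsf{A}'=(P'\st{f'}\to Q')$ with $M=\mathsf{Cok} f$, $M'=\mathsf{Cok} f'$, and let $g\colon M\to M'$. Because $Q$ is projective and $\pi_{M'}$ is an epimorphism, the composite $g\circ\pi_M\colon Q\to M'$ lifts to some $\sigma_2\colon Q\to Q'$. Then $\pi_{M'}\circ \sigma_2\circ f=g\circ\pi_M\circ f=0$, so $\sigma_2\circ f$ factors through $\im f'$; again using that $P$ is projective and $P'\twoheadrightarrow\im f'$, I can lift to $\sigma_1\colon P\to P'$ with $f'\sigma_1=\sigma_2 f$. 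The pair $(\sigma_1,\sigma_2)$ is then a morphism in $\CP(\La)$ mapping under $\mathsf{Cok}$ to $g$.

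\textbf{Objectivity.} This is the main step. I need to show that any $(\sigma_1,\sigma_2)\colon\mathsf{A}\to\mathsf{A}'$ with $\mathsf{Cok}(\sigma_1,\sigma_2)=0$ factors through some object of $\CV$. The vanishing condition means $\pi_{M'}\circ\sigma_2=0$, hence $\sigma_2$ factors as $\sigma_2=f'h$ for some $h\colon Q\to P'$ (by projectivity of $Q$ and exactness of $P'\st{f'}\to Q'\to M'\to 0$). The idea is to split $(\sigma_1,\sigma_2)$ as a sum of two morphisms, each clearly factoring through a summand of $\CV$:
\begin{itemize}
\item The piece $(hf,f'h)=(hf,\sigma_2)$ factors as
\[
(P\st{f}\to Q)\xrightarrow{(f,\,1_Q)}(Q\st{1}\to Q)\xrightarrow{(h,\,f'h)}(P'\st{f'}\to Q'),
\]
which is a valid morphism in $\CP(\La)$ and passes through the object $(Q\st{1}\to Q)\in\CV$.
\item The residual piece $(\sigma_1-hf,0)$ is well-defined because $f'(\sigma_1-hf)=f'\sigma_1-\sigma_2 f=0$ by the commutative-square condition; it factors as
\[
(P\st{f}\to Q)\xrightarrow{(1_P,\,0)}(P\to 0)\xrightarrow{(\sigma_1-hf,\,0)}(P'\st{f'}\to Q'),
\]
passing through $(P\to 0)\in\CV$.
\end{itemize}
The sum of the two factorisations is $(\sigma_1,\sigma_2)$, and it factors through the direct sum $(Q\st{1}\to Q)\oplus(P\to 0)\in\CV$, as required.

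\textbf{The induced equivalence onto the image.} Since $\mathsf{Cok}(P\st{1}\to P)=0$ and $\mathsf{Cok}(P\to 0)=0$, the functor $\mathsf{Cok}$ vanishes on every object of $\CV$, hence vanishes on all morphisms factoring through $\CV$. Therefore $\mathsf{Cok}$ descends uniquely to a functor $\overline{\mathsf{Cok}}\colon\CP(\La)/\CV\to\mmod\La$ with $\overline{\mathsf{Cok}}\circ\pi=\mathsf{Cok}$. Density and fullness of $\overline{\mathsf{Cok}}$ are inherited from $\mathsf{Cok}$, and faithfulness is exactly the content of objectivity established above.

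The only genuinely nontrivial point is the objectivity step; the clever splitting $(\sigma_1,\sigma_2)=(hf,\sigma_2)+(\sigma_1-hf,0)$, with the former routed through $(Q\st{1}\to Q)$ and the latter through $(P\to 0)$, is where projectivity of the entries and the identity $f'\sigma_1=\sigma_2 f$ interact non-trivially.
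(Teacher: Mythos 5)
The paper does not prove this statement; it is quoted verbatim from \cite[Theorem 4.2]{HE}, so there is no in-paper proof to compare against. Your argument is correct and is the expected one: density and fullness are routine liftings along projective presentations, and your splitting $(\sigma_1,\sigma_2)=(hf,\sigma_2)+(\sigma_1-hf,0)$, with the first summand routed through $(Q\st{1}\rt Q)$ and the second through $(P\rt 0)$, correctly yields a factorization through $(Q\st{1}\rt Q)\oplus(P\rt 0)\in\CV$, which is exactly what objectivity requires; the descent to $\overline{\mathsf{Cok}}$ and its faithfulness then follow formally.
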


Note that, because of the characterization of injective objects in $\CP(\La)$, as stated in below,  the factor category $\frac{\CP(\La)}{\CV}$ in the above theorem is precisely the injectively stable category $\overline{\CP(\La)}$.

\begin{lemma}\label{proj-inde-p}\cite[Lemma 3.1]{HW}
Let $\mathsf{P}$ be an indecomposable object in the exact category $\CP(\La)$.
\begin{itemize}
\item [$(1)$]$\mathsf{P}$ is projective if and only if it is isomorphic to either  $(P\st{1}\rt P)$ or $(0 \rt P)$ for some  indecomposable projective  $\La$-module.
\item [$(2)$] $\mathsf{P}$ is injective if and only if it is isomorphic to either  $(P\st{1}\rt P)$ or $(P \rt 0)$ for some  indecomposable projective $\La$-module.
\end{itemize}
\end{lemma}

The following result shows that the functor $\mathsf{Cok}$ behaves well concerning the almost split sequences.

\begin{proposition}\label{Cok-ass}\cite[Proposition 5.6]{Ba}
Let $\eta: \mathsf{X}\st{\phi}\rightarrowtail  \mathsf{Y}\st{\psi}\twoheadrightarrow \mathsf{Z}$ be an almost split sequence in $\CP(\La)$. If  $\mathsf{Z}$ is not isomorphic to $(P\rt 0)$,  for some indecomposable projective module $P$,
 then
$$\mathsf{Cok}(\eta): \ 0 \rt \mathsf{Cok}{\rm X}\rt \mathsf{Cok}\mathsf{Y}\rt \mathsf{Cok}\mathsf{Z}\rt 0$$
is an almost split sequence in $\mmod \La$.
\end{proposition}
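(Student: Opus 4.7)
My plan is to split the proof into three claims: (i) $\mathsf{Cok}\eta$ is a short exact sequence in $\mmod\La$; (ii) it does not split; (iii) the morphism $\mathsf{Cok}(\psi):\mathsf{Cok}\mathsf{Y}\to\mathsf{Cok}\mathsf{Z}$ is right almost split. The workhorse throughout is Theorem \ref{Thm-Cok-equiv}: $\mathsf{Cok}$ is full and dense, annihilates exactly the morphisms factoring through $\CV$, and induces the equivalence $\overline{\CP(\La)}\simeq\mmod\La$. As a preliminary, I will observe that $M:=\mathsf{Cok}\mathsf{Z}$ is a nonzero indecomposable non-projective module, so that the AR sequence $\epsilon:0\to\tau M\to E\to M\to 0$ genuinely exists in $\mmod\La$. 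Nonzero-ness and indecomposability follow from the equivalence using that $\mathsf{Z}$ is indecomposable and non-injective (the hypothesis $\mathsf{Z}\not\cong(P\to 0)$, combined with $\mathsf{Z}$ being non-projective, also rules out the other indecomposable injective $(P\xrightarrow{1}P)$). Non-projectivity of $M$ is by contradiction: $M\cong Q$ projective would, through the equivalence, force $\mathsf{Z}\cong(0\to Q)$ in $\CP$, which is projective, contradicting the hypothesis.

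Granting exactness of $\mathsf{Cok}\eta$ (handled below), claims (ii) and (iii) follow purely formally from the fullness and density of $\mathsf{Cok}$ modulo $\CV$. For (iii): given $g:N\to M$ not split epi in $\mmod\La$, pick $\mathsf{N}\in\CP$ with $\mathsf{Cok}\mathsf{N}=N$ by density and lift $g$ to $\tilde g:\mathsf{N}\to\mathsf{Z}$ by fullness; $\tilde g$ cannot be split epi in $\CP$ (else $\mathsf{Cok}\tilde g=g$ would be in $\mmod\La$), so the AR property of $\eta$ factors $\tilde g=\psi\tilde h$, and applying $\mathsf{Cok}$ factors $g$ through $\mathsf{Cok}(\psi)$. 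For (ii), suppose $s:M\to\mathsf{Cok}\mathsf{Y}$ is a section of $\mathsf{Cok}(\psi)$. Lift $s$ to $\tilde s:\mathsf{Z}\to\mathsf{Y}$; then $\psi\tilde s-1_\mathsf{Z}$ is annihilated by $\mathsf{Cok}$, so it factors through an object of $\CV$. Since $\mathsf{Z}$ is indecomposable and non-injective, this forces $\psi\tilde s-1_\mathsf{Z}\in\mathrm{rad}\,\mathrm{End}_{\CP}(\mathsf{Z})$, hence $\psi\tilde s$ is an automorphism and $\psi$ is split epi, contradicting $\eta$ being almost split.

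The main obstacle is claim (i). Writing $\mathsf{X}=(X_1\xrightarrow{f_X}X_0)$, and similarly for $\mathsf{Y},\mathsf{Z}$, the snake lemma applied to the degree-wise short exact sequence $\eta$ yields
\begin{equation*}
0\to\Ker f_X\to\Ker f_Y\to\Ker f_Z\xrightarrow{\delta}\mathsf{Cok}\mathsf{X}\to\mathsf{Cok}\mathsf{Y}\to\mathsf{Cok}\mathsf{Z}\to 0,
\end{equation*}
so short exactness of $\mathsf{Cok}\eta$ amounts to vanishing of the connecting map $\delta$. My proposed route is to construct an AR sequence in $\CP$ ending at $\mathsf{Z}$ directly from $\epsilon$: lift $\epsilon$ through compatible minimal projective presentations of $\tau M$, $E$, $M$ that assemble into a degree-wise exact sequence of two-term complexes, producing a conflation $\tilde\epsilon:\mathsf{X}'\rightarrowtail\mathsf{Y}'\twoheadrightarrow\mathsf{Z}'$ in $\CP$ with $\mathsf{Cok}\tilde\epsilon=\epsilon$ (short exact by construction). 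Minimality ensures $\mathsf{Z}'$ carries no $(P\to 0)$-summand, so under the equivalence $\overline{\CP}\simeq\mmod\La$ the object $\mathsf{Z}'$ matches $\mathsf{Z}$ on the nose in $\CP$. Running the formal argument of the previous paragraph on $\tilde\epsilon$ verifies that it enjoys the AR property in $\CP$; uniqueness of AR sequences then gives $\eta\cong\tilde\epsilon$, whence $\mathsf{Cok}\eta\cong\epsilon$ is short exact. The delicate bookkeeping — identifying $\mathsf{Z}'$ with $\mathsf{Z}$ exactly rather than merely up to summands in $\CV$ — is exactly what the hypothesis $\mathsf{Z}\not\cong(P\to 0)$ is engineered to permit, and is the subtle step on which the whole argument rests.
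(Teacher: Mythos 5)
The paper gives no proof of this statement at all: it is imported verbatim from Bautista \cite[Proposition 5.6]{Ba}, so there is no in-text argument to compare yours against. On its own terms, most of your proposal is sound. The preliminary reductions, the radical argument for non-splitness in (ii), and the lift-and-factor argument for right almost splitness in (iii) are exactly the standard way to exploit a full, dense, objective functor; together with the indecomposability of $\mathsf{Cok}\mathsf{X}$ (which follows from $\overline{\CP}\simeq\mmod\La$ since $\mathsf{X}$ is indecomposable non-injective) they would indeed finish the proof once exactness of $\mathsf{Cok}\eta$ is secured. You are also right that exactness is the genuine issue ($\mathsf{Cok}$ of a conflation in $\CP$ is only right exact in general), and the plan of lifting $\epsilon$ by the horseshoe lemma to a conflation $\tilde\epsilon$ with $\mathsf{Cok}\tilde\epsilon=\epsilon$ and $\mathsf{Z}'\cong\mathsf{Z}$ is viable; the identification $\mathsf{Z}'\cong\mathsf{Z}$ that you flag as the delicate point is actually the easy part (both come from minimal presentations of $M$, so Lemma \ref{easyLemma}(i)(4) applies).

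The real gap is the sentence ``running the formal argument of the previous paragraph on $\tilde\epsilon$ verifies that it enjoys the AR property in $\CP$.'' That argument goes in the opposite direction and does not transfer. To show that $\psi'\colon\mathsf{Y}'\to\mathsf{Z}'$ is right almost split you must start from a non-split epimorphism $u\colon\mathsf{N}\to\mathsf{Z}'$ \emph{in} $\CP$, push it down, factor $\mathsf{Cok}u$ through $E\to M$ using that $\epsilon$ is almost split, and lift the factorization back; the lift agrees with $u$ only up to a morphism $w$ that factors through $\CV$, i.e.\ through an injective object of $\CP$. Summands of type $(b)$ are projective in $\CP$ and cause no trouble, but a morphism $(P\to 0)\to\mathsf{Z}'$ need \emph{not} factor through an arbitrary deflation onto $\mathsf{Z}'$: writing $\mathsf{Z}'=(Z_1\to Z_0)$, such a morphism is a map $\sigma\colon P\to\Ker(Z_1\to Z_0)$, and the obstruction to lifting it through $\psi'$ is exactly $\delta\circ\sigma$, where $\delta$ is the snake-lemma connecting map of $\tilde\epsilon$. (Some such verification is unavoidable: a non-split conflation ending at $\mathsf{Z}$ with indecomposable first term need not be almost split when the stable endomorphism ring of $\mathsf{Z}$ is not a division ring, so you cannot get $\eta\cong\tilde\epsilon$ for free from non-splitness alone.) The step is repairable — for your $\tilde\epsilon$ one has $\delta=0$ by construction, because $\mathsf{Cok}\tilde\epsilon=\epsilon$ is exact on the left, so every morphism into $\mathsf{Z}'$ factoring through $\CV$ does lift through $\psi'$ — but this is genuine content that your write-up asserts rather than proves, and it is precisely where the special shape of $\tilde\epsilon$, as opposed to an arbitrary non-split conflation ending at $\mathsf{Z}$, enters the argument.
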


\begin{definition}\label{Definition-types}
An indecomposable object $\mathsf{M}$ in $\CH(\La)$ is said to be of the type $(a)$ (resp. $(b)$ and  $(c)$)  if it is    isomorphic to  $(0\rt C)$ (resp.  $(C\st{1}\rt C)$ and $(C\rt 0)$) for some indecomposable module $C$.
\end{definition}

\begin{notation}

 Let ${\mathsf{M}}$ be an object in $\CH(\La)$, and ${\mathsf{M}}=\oplus_{i \in I} {\mathsf{M}}_i$ be a decomposition of ${\mathsf{M}}$ into indecomposable objects.
We denote by  ${\mathsf{M}}_0$ the maximal direct summand of ${\mathsf{M}}$ having no direct summands isomorphic to the types $(a), (b)$, or $(c)$. For every $ * \in \{a, b, c\} $, we define  $\mathsf{M}_{*}=\oplus_{i \in I_*} \mathsf{M}_i$, where $I_* \subseteq I$ consists of all $i \in I$ such that $M_i$ is of type $*.$ By using the Krull-Schmidt property of $\CH(\La)$ we have the following unique (up to isomorphism) decomposition 
$${\mathsf{M}}={\mathsf{M}}_0\oplus {\mathsf{M}}_a\oplus {\mathsf{M}}_b\oplus {\mathsf{M}}_c$$ of ${\mathsf{M}}$.

\end{notation}

\subsection{$\tau$-tilting theory in the  module category}
We recall some notions and terminologies related to $\tau$-tilting theory from \cite{AIR}, that are necessary for this paper.

\begin{definition}\cite[Definition 0.1]{AIR}
Let $M$ be a module in $\mmod \La$.
\begin{itemize}
\item [$(1)$] We call $M$ a $\tau$-rigid module  if ${\Hom_{\La}}(M, \tau M)=0$.
\item [$(2)$]  We call $M$ a  $\tau$-tilting module if $M$ is $\tau$-rigid and $|M|=|\La|$, where $|M|$ denotes the number of indecomposable direct summand of $M$.

\item [$(3)$] We call $M$ a support $\tau$-tilting  module if there exists an idempotent $e$ of $\La$ such that $M$ is a $\tau$-tilting module in $\mmod \frac{\La}{<e>}$.
\end{itemize}
\end{definition}

 We denote by $s\tau \mbox{-}{\rm tilt}\La$ the set of isomorphism classes of  support $\tau$-tilting $\La$-modules.
\begin{definition}\cite[Definition 0.3]{AIR}
Let $(M, P)$ be a pair with $M$ in $\mmod \La$ and $P$ a projective $\La$-module.
\begin{itemize}
\item [$(1)$] We call $(M, P)$ a $\tau$-rigid pair if $M$ is $\tau$-rigid and ${\Hom}_{\La}(P, M)=0$.
\item [$(2)$]  We call $(M, P)$ a  support $\tau$-tilting object if $({\rm M}, P)$ is $\tau$-rigid and $|M|+|P|=|\La|.$
\end{itemize}
\end{definition}

 As usual, a pair $(M, P)$ is called basic if $M$ and $P$ are both basic. According to \cite[Proposition 2.3]{AIR}, the concepts of support $\tau$-tilting modules and support $\tau$-tilting pairs are essentially equivalent.

\section{ ICE-closed subcategories in $\CP(\La)$}\label{Section 3}

In this section, motivated by \cite{En} and \cite{ES2}, we introduce the notions of 
ICE-closed and IE-closed subcategories in $\CP(\Lambda)$, as well as twin objects. 
Then, in our main Theorems~\ref{Thm. ICE-rigid-Mor} and~\ref{thm:main-twin-IEclosed123}, 
we establish bijections between these subcategories, under some additional conditions, 
and tilting objects and twin objects, accordingly.

\subsection{ICE-closed subcategories in $\CP(\La)$}

Motivated by \cite{En}, we define the notion of an ICE-closed subcategory for an exact category as follows.
\begin{definition}\label{ICE}
Let $\CE$ be an exact category. A subcategory  $\CC$ of  $\CE$ is called  ICE-closed if it  satisfies the following conditions:
\begin{itemize}
    \item  $\mathscr{C}$ is  {\em extension closed}, that is,   for any conflation $X\rightarrowtail Y\twoheadrightarrow Z$ with $X,Z\in \mathscr{C}$,  $Y\in \mathscr{C}$;
    \item   $\mathscr{C}$  is  {\em admissible cokernel closed}, that is, for every admissible morphism $f:M\rt N$ with $M, N \in \CC$,  $\mathsf{Cok}f \in \CC;$
 \item   $\mathscr{C}$  is  {\em admissible image closed}, that is, for every addmisible morphism $f:M\rt N$ with $M, N \in \CC$,  $\mathsf{Im}f \in \CC.$
   \end{itemize}
\end{definition}
 ${\rm ice}~\CE$ (resp. ${\rm ice}_p\CE$) denotes the set of isomorphic classes of ICE-closed subcategories (resp.  ICE-closed subcategories with enough $\Ext$-projectives) of $\CE$. By the definition of an ICE-closed subcategory, we can observe that it is closed under finite direct sums and direct summands.

\begin{proposition}\label{Prop. ext-pro-Cok}
    Let $\CC$ be an ICE-closed subcategory of $\CP(\La).$ If  $\CC$ has enough Ext-projectives, then the following statements hold.
    \begin{itemize}
        \item [$(1)$] There exists an $\Ext$-progenarator $\mathsf{P}(\CC)$ which is rigid;
        \item [$(2)$] $\CC=\mathsf{Cok}\mathsf{P}(\CC).$
    \end{itemize}
\end{proposition}
\begin{proof}
  Since any $\Ext$-projective object $\mathsf{N}$ in $\CC$ is rigid,  by \cite[Proposition 4.14]{GNP1}, we infer  $|\mathsf{N}|\leq 2 |\La|.$ Hence, in view of assumption that  $\CC$ has  enough Ext-projectives, it  has to possess  an  $\Ext$-progenerator $\mathsf{P}(\CC).$ As $\CC$ is  admissible cokernel closed, we get $\mathsf{Cok}\mathsf{P}(\CC) \subseteq \CC$. Conversely, by assumption thus $\CC$ has enough $\Ext$-pojectives,  we have  $\CC \subseteq \mathsf{Cok}\mathsf{P}(\CC)$.
\end{proof}

\begin{lemma}\label{lem. fac-sub-add}
    Let $\mathsf{M}$ be a rigid object in $\CP(\La).$ Then, $\mathsf{Fac}\mathsf{M}\cap \mathsf{Sub}\mathsf{M}=\mathsf{add}\mathsf{M}.$
\end{lemma}
 \begin{proof}
    It is plain that $\mathsf{add}\mathsf{M} \subseteq \mathsf{Fac}\mathsf{M}\cap \mathsf{Sub}\mathsf{M}$.   Conversely, let $\mathsf{N} \in \mathsf{Fac}\mathsf{M}\cap \mathsf{Sub}\mathsf{M}$. Take a left $\mathsf{add}\mathsf{M}$-approximation $\phi \colon  \mathsf{N}\to \mathsf{M}'$ with $\mathsf{M}'\in \mathsf{add}\mathsf{M}$. Since $ \mathsf{N} \in  \mathsf{Sub}\mathsf{M}$, $\phi$ factors through an inflation in $\CP$. This follows that $\phi$ is an inflation in $\CH.$ Then, we have the following conflation
    $ \epsilon: \mathsf{N}\st{\phi}\rightarrowtail \mathsf{M}'\twoheadrightarrow \mathsf{K}$ in $\CH$. By applying $\Hom_{\CH}(-,\mathsf{M})$, we obtain the following exact sequence 
  \[
  \begin{tikzcd}
    \Hom_{\CH}(\mathsf{M}', \mathsf{M}) \rar["{(-)\circ\phi}"] &  \Hom_{\CH}(\mathsf{N}, \mathsf{M}) \rar & \Ext^1_{\CH}(\mathsf{K}, \mathsf{M}) \rar &  \Ext^1_{\CH}(\mathsf{M}', \mathsf{M}).
  \end{tikzcd}
  \]
  Since $\phi$ is a left $\mathsf{add}\mathsf{M}$-approximation, $(-)\circ\varphi$ is a surjection.
  In addition, $\Ext_{\CH}(\mathsf{M}', \mathsf{M})=0$,  since $\mathsf{M}$ is rigid.  Hence $\Ext_{\CH}^1(\mathsf{K}, \mathsf{M}) =  0$. As $\mathsf{N}$ belongs to $\mathsf{Fac}\mathsf{M}$, then there exists a conflation $\la: \mathsf{D} \rightarrowtail \mathsf{M}''\twoheadrightarrow \mathsf{N}$ in $\CP$ with $\mathsf{M}'' \in \mathsf{add}\mathsf{M}$. As $\mathsf{D}$   has projective dimension at most one in $\CH$, we have the following exact sequence by using the conflation $\la$
  \[
  \begin{tikzcd}
    \Ext_{\CH}^1(\mathsf{K}, \mathsf{M}'') \rar & \Ext_{\CH}^1(\mathsf{K}, \mathsf{N}) \rar & 0.
  \end{tikzcd}
  \]
  Since we have $\Ext_{\CH}^1(\mathsf{K}, \mathsf{M}'') = 0$, we obtain $\Ext_{\CH}^1(\mathsf{K}, \mathsf{N}) = 0$. It follows that the conflation $\epsilon$ splits, which implies $\mathsf{N} \in \mathsf{add}\mathsf{M}$.
 \end{proof}
\begin{proposition}\label{Prop. Cok.Ice-subcatgory}
Let $\mathsf{M}$ be a basic rigid object in $\CP(\La).$
\begin{itemize}
    \item [$(1)$] The following conditions are equivalent for $\mathsf{X}$ in $\CP(\La)$;
    \begin{itemize}
        \item [$(a)$] $\mathsf{X} \in \mathsf{Cok}\mathsf{M},$
        \item [$(b)$] There exists a conflation
        $$ \mathsf{M}_1\rightarrowtail \mathsf{M}_0\twoheadrightarrow \mathsf{X},$$
        where $\mathsf{M}_0, \mathsf{M}_1 \in \mathsf{add}\mathsf{M}.$
    \end{itemize}
    \item [$(2)$] $\mathsf{Cok}\mathsf{M}$ is an ICE-closed subcategory of $\CP(\La);$
    \item [$(3)$] $\mathsf{P}(\mathsf{Cok}\mathsf{M})\simeq \mathsf{M}.$
\end{itemize}
\end{proposition}
\begin{proof}
    $(1)$ It is evident  that $(b)$ implies that $(a)$. Let $\mathsf{X}$ belong to $\mathsf{Cok}\mathsf{M},$ then we have $\mathsf{M}_1\st{f}\rt \mathsf{M}_0\twoheadrightarrow \mathsf{X}$, where $\mathsf{M}_0, \mathsf{M}_1 \in \mathsf{add}\mathsf{M}.$ Clearly, $\mathsf{Im} f \in \mathsf{Fac}\mathsf{M}\cap \mathsf{Sub}\mathsf{M}$. Hence, according to Lemma \ref{lem. fac-sub-add}, $\mathsf{Im} f \in \mathsf{add}\mathsf{M},$  leading to get the  desired conflation in $(b)$.
$(2)$ We first show that $\mathsf{Cok}\mathsf{M}$ is closed under extension. To this end, we take a conflation $\la: \mathsf{N}_1 \rightarrowtail \mathsf{N}_3\twoheadrightarrow \mathsf{N}_2$, where $\mathsf{N}_1, \mathsf{N}_2 \in \mathsf{Cok}\mathsf{M}$. Then, for $i=1, 2$, there exist conflations $\la_i: \mathsf{M}^i_1\rightarrowtail \mathsf{M}^i_0\twoheadrightarrow \mathsf{N}_i$, where $\mathsf{M}^i_1, \mathsf{M}^i_0 \in \mathsf{add}\mathsf{M}.$ In particular, for the case $\la_1$, applying the facts that $\CP$ is a hereditary exact category and $\mathsf{M}$ is a rigid object, we can show that the conflation $\la$ provides an exact sequence of abelian groups when we apply the functor $\Hom_{\CP}(\mathsf{M}, -)$ on it. This enables us to apply the horseshoe lemma, leading to a conflation  $\mathsf{M}^0_1\oplus  \mathsf{M}^0_1 \rightarrowtail \mathsf{M}^1_0 \oplus  \mathsf{M}^0_0\twoheadrightarrow \mathsf{N}_3$, so proving the claim. Now, we show that $\mathsf{Cok}\mathsf{M}$ is closed under admissible cokernels. Let $f:\mathsf{N}\rt \mathsf{K}$ be an admissible morphism in $\CP$ and both domain and codomain in $\mathsf{Cok}\mathsf{M}.$ Hence, $\mathsf{Im}f \in \CP$, and we have the factorization of $f$ through deflation $p:\mathsf{N}\twoheadrightarrow \mathsf{Im} f$ and inflation $i: \mathsf{Im} f\rightarrowtail \mathsf{K}.$ Since $ \mathsf{K} \in \mathsf{Cok}\mathsf{M}$, there is a conflation $ \mathsf{M}_1\rightarrowtail \mathsf{M}_0\twoheadrightarrow  \mathsf{K}$, where $\mathsf{M}_0, \mathsf{M}_1 \in \mathsf{add}\mathsf{M}.$ By taking pullback, we obtain the following diagram

  \begin{equation} \label{diag}
\xymatrix{\\	 & \mathsf{M}_1 \ar@{>->}[d] \ar@{=}[r] & \mathsf{M}_1 \ar@{>->}[d]
 &   & \\
	 & \mathsf{Q} \ar@{->>}[d] \ar@{>->}[r] & \mathsf{M}_0
	\ar@{->>}[d] \ar@{->>}[r] &   \mathsf{Cok} f \ar@{=}[d]  & \\
  & \mathsf{Im} f  \ar@{>->}^{i}[r] & \mathsf{K}
	 \ar@{->>}[r] & \mathsf{Cok} f    & \\
&  &   &  & }
\end{equation}
Taking pullback along the morphism $p: \mathsf{N}\rt \mathsf{Im}f$ and the conflation lying in the first column of the above diagram gives us the following diagram:

$$\xymatrix{
	 & \mathsf{M}_1 \ar@{=}[d] \ar@{>->}[r] & \mathsf{L}
	\ar@{->>}[d] \ar@{->>}[r] &   \mathsf{N} \ar@{->>}^p[d]  & \\
  & \mathsf{M}_1  \ar@{>->}[r] & \mathsf{Q}
	 \ar@{->>}[r] & \mathsf{Im} f  & \\
&   &   &  & }
$$
We have already proved that $\mathsf{Cok}\mathsf{M}$ is closed under extension. Hence,  the upper conflation in the above diagram implies that  $\mathsf{L}$ lies in $\mathsf{Cok}\mathsf{M}$. Consequently,  $\mathsf{Q} \in \mathsf{Fac}\mathsf{M}\cap \mathsf{Sub}\mathsf{M} $, and according to Lemma \ref{lem. fac-sub-add}, $\mathsf{Q} \in \mathsf{add}\mathsf{M}$. Hence, the second row of diagram \ref{diag} provides us a conflation such that the first two terms belong to $\mathsf{add}\mathsf{M}$. This completes the claim. To show that $\mathsf{Cok}\mathsf{M}$ is closed under images of admissible morphisms, we need to make a simple modification of the counterpart of \cite[Proposition 3.5(2)]{En}, which we leave to the reader.

$(3)$ Suppose $\mathsf{X}$ is any object in $\mathsf{Cok}\mathsf{M}$. According to $(1)$, there is a conflation $ \mathsf{M}_1\rightarrowtail \mathsf{M}_0\twoheadrightarrow \mathsf{X}$, where $\mathsf{M}_0$ and $\mathsf{M}_1$ are in $\mathsf{add}\mathsf{M}$. Using the long exact sequences and $\CP$ being a hereditary exact category, we find that $\mathsf{M}$ is an $\Ext$-projective object in $\mathsf{Cok}\mathsf{M}$. Thanks to Proposition \ref{Prop. ext-pro-Cok}, there exists the Ext-progenrator  $\mathsf{P}(\mathsf{Cok}\mathsf{M})$. In addition, from $(1)$, there is a conflation $\mathsf{M}_1\rightarrowtail \mathsf{M}_0\twoheadrightarrow \mathsf{P}(\mathsf{Cok}\mathsf{M})$ with  $\mathsf{M}_0$ and $\mathsf{M}_1$ are in $\mathsf{add}\mathsf{M}$. As $\mathsf{P}(\mathsf{Cok}\mathsf{M})$ is $\Ext$-projective, the sequence splits,  and because of this fact that both $\mathsf{M}$ and $\mathsf{P}(\mathsf{Cok}\mathsf{M})$ are basic,  we obtain the desired isomorphism in $(3)$.
\end{proof}
Now we are ready to state Enomoto's bijection for the hereditary exact category $\CP(\La)$.
\begin{theorem}\label{Thm. ICE-rigid-Mor}
  Let $\Lambda$ be an Artin algebra. Then we have the following bijections
  \[
  \begin{tikzcd}
    {\rm rigid} \CP(\La) \rar[shift left, "\mathsf{Cok}"] & {{\rm ice}_p\CP(\La)}\lar[shift left, "\mathsf{P}"]
  \end{tikzcd}   \]
  \end{theorem}
\begin{proof}
    Propositions \ref{Prop. ext-pro-Cok} and \ref{Prop. Cok.Ice-subcatgory} establish that the maps $\mathsf{Cok}$ and $\mathsf{P}$ are well-defined. Furthermore, these propositions also show that they are mutually inverse to each other.
\end{proof}

\subsection{IKE-closed subcategories in $\CP(\La)$}
Here, we define the dual notion of an ICE-closed subcategory introduced in Definition~\ref{ICE}.

\begin{definition}\label{IKE}
Let $\CE$ be an exact category. A subcategory  $\CC$ of  $\CE$ is called  an IKE-closed if it is closed under  admissible image, admissible kernel and extension. Here, we call a subcategory is closed under admissible kernel if   for every admissible morphism $f:M\rt N$ in the subcategory, then $\mathsf{Ker}f$ belongs to the subcategory.  We denote by  ${\rm ike} \CE$ (resp. ${\rm ike}_i \CE$) the set of equivalent class of IKE-closed subcategories (resp. IKE-closed subcategories with enough $\Ext$-injectives)  of $\CE$.
\end{definition}

Applying the duality \( (-)^* = \Hom_{\Lambda}(-,\Lambda) \) componentwise to objects in \(\CP(\Lambda)\), we obtain a duality between \(\CP(\Lambda)\) and \(\CP(\Lambda^{\mathrm{op}})\). We use the same notation \( (-)^* \) for this induced duality. By applying the duality \( (-)^* \) to the bijection in Theorem \ref{Thm. ICE-rigid-Mor}, now for the opposite Artin algebra \(\Lambda^{\mathrm{op}}\), we obtain:

\begin{itemize}
\item \({\rm rigid}\,\CP(\Lambda^{\mathrm{op}})\) is sent to \({\rm rigid}\,\CP(\Lambda)\);
\item the subcategories \({\rm ice}_p\,\CP(\Lambda^{\mathrm{op}})\) are sent to the subcategories \({\rm ike}_i\,\CP(\Lambda)\);
\item the bijections \(\mathsf{Cok}\) and \(\mathsf{P}\) correspond respectively to \(\mathsf{Ker}\) and \(\mathsf{I}\).
\end{itemize}

Combining all these facts, we obtain the following commutative diagram of bijections, where the vertical bijections are induced by the duality \( (-)^* \):

\[
\begin{tikzcd}[column sep = large]
{\rm rigid}\,\CP(\Lambda^{\mathrm{op}}) \dar[shift left, "(-)^*"] \rar["{\mathsf{Cok}}", shift left]
  & {\rm ice}_p\,\CP(\Lambda^{\mathrm{op}}) \dar[shift left, "(-)^{*}"] \lar["{\mathsf{P} }", shift left]\\
{\rm rigid}\,\CP(\Lambda) \rar["{\mathsf{Ker}}", shift left] \uar["(-)^*", shift left]
  & {\rm ike}_i\,\CP(\Lambda^{\mathrm{op}}) \lar["{\rm I}", shift left] \uar["(-)^{*}", shift left]
\end{tikzcd}
\]

According to above discussion, we have the following bijection between rigid objects in $\CP(\La)$ and another type of the subcategories.
\begin{theorem}\label{Thm. IKE-rigid-Mor}
  Let $\Lambda$ be an Artin algebra. Then, we have the following bijections
  \[
  \begin{tikzcd}
    {\rm rigid} \CP(\La) \rar[shift left, "\mathsf{Ker}"] & {{\rm ike}_i\CP(\La)}\lar[shift left, "\mathsf{I}"]
  \end{tikzcd}   \]
  \end{theorem}
\subsection{Twin objects in $\CP(\La)$}

Motivated by \cite[Definition 1.1]{ES2}, where twin modules are introduced, we define twin objects in $\CP(\La)$ analogously.

\begin{definition}\label{def:twin-rigid}
  Let $\mathsf{P}$ and $\mathsf{I}$ be two objects in $\CP(\La)$. We say that a pair $(\mathsf{P},\mathsf{I})$ is a \emph{twin rigid object} if the following conditions are satisfied.
  \begin{enumerate}
      \item $\mathsf{P}$ and $\mathsf{I}$ are rigid, that is, $\Ext_{\CP}^{1}(\mathsf{P},\mathsf{P})=0$ and $\Ext_{\CP}^{1}(\mathsf{I}, \mathsf{I})=0$ hold.
      \item There are two conflations
    \begin{equation}\label{eq:twin-rigid-1}     
\mathsf{P}  \rightarrowtail   \mathsf{I}^0 \twoheadrightarrow \mathsf{I}^1
    \end{equation}
    \begin{equation}\label{eq:twin-rigid-2}
    \mathsf{P}_1 \rightarrowtail \mathsf{P}_0 \twoheadrightarrow \mathsf{I}
    \end{equation}
  with $\mathsf{I}^{0}, \mathsf{I}^{1}\in\mathsf{add}~\mathsf{I}$ and $\mathsf{P}_{0}, \mathsf{P}_{1}\in\mathsf{add}~\mathsf{P}$.
  \end{enumerate}
  \end{definition}
A twin pair object $(\mathsf{P},\mathsf{I})$ is called basic if both $\mathsf{P}$ and $\mathsf{I}$ are basic. Similar to \cite[Example 1.1]{ES2} for the twin pair objects in the module category, we have the following three extreme types of examples of twin pair objects in the morphism category. 
\begin{example}

  \begin{itemize}
    \item $(\mathsf{T}, (\La\st{1}\rt \La)\oplus(\La\rt 0))$ is twin rigid if and only if $\mathsf{T}$ is tilting.
    \item $((\La\st{1}\rt \La)\oplus (0 \rt \La), \mathsf{U})$ is twin rigid if and only if $\mathsf{U}$ is cotilting.
    \item $(\mathsf{P},\mathsf{P})$ is twin rigid if and only if $\mathsf{P}$ is rigid.
  \end{itemize}
\end{example}
According to this example in two ways we can send tilting objects in $\CP(\La)$ into twin rigid objects. Here, we should mention that tilting objects and cotiliting objects coincide in $\CP(\La),$ see \cite[Theorem 4.3]{GNP1}.

\begin{definition}\label{IE1}
Let $\CE$ be an exact category. A subcategory  $\CC$ of  $\CE$ is called  IE-closed if it is  {\em admissible image closed} and {\em extension closed}.
\end{definition}

\begin{proposition}\label{prop:eic11}
Let $(\mathsf{P}, \mathsf{I})$ be a twin rigid object.
Set $\CC = \mathsf{Cok} \mathsf{P} \cap \mathsf{Ker}\mathsf{I}$. Then the following statements hold.
\begin{enumerate}
  \item $\CC$ is an IE-closed  subcategory of $\CP(\La)$;
    \item $\CC$ has an $\Ext$-progenerator $\mathsf{P}$ and an $\Ext$-procogenerator $\mathsf{I}$.
\end{enumerate}
\end{proposition}

\begin{proof}
$(1)$ According to Theorems \ref{Thm. ICE-rigid-Mor} and \ref{Thm. IKE-rigid-Mor},  the subcategories $\mathsf{Cok} \mathsf{P}$ and $\mathsf{Ker}\mathsf{I}$ are ICE-closed and IKE-closed respectively. Hence, the subcategory $\CC$ has to be  IE-closed. 

$(2)$  We only prove the statement for $\mathsf{P}.$ From the conflation \ref{eq:twin-rigid-1} in Definition \ref{def:twin-rigid}, we have $\mathsf{P} \in \mathsf{Ker}\mathsf{I}$. So, $\mathsf{P} \in \CC$.  On the other hand, from  Proposition \ref{Prop. Cok.Ice-subcatgory}, we know that $\mathsf{P}$ is an $\Ext$-progenerator for  $\mathsf{Cok} \mathsf{P}$. Let $\mathsf{X}$ be an arbitrary object in $\CC$. There exists a conflation in $\mathsf{Cok} \mathsf{P}$ 
 \begin{equation}\label{eq. conf. Ext.IE}   
\mathsf{K} \rightarrowtail \mathsf{P}_0 \twoheadrightarrow \mathsf{X}
    \end{equation}
with $\mathsf{P}_0 \in \mathsf{add} \mathsf{P}.$ Since  $\mathsf{Ker}\mathsf{I}$ is closed under kernel, we obtain $\mathsf{K} \in \mathsf{Ker}\mathsf{I}$. Hence, the conflation lies in $\CC$ as well. Hence,  $\mathsf{P}$ is also an $\Ext$-progenerator for $\CC$. 
\end{proof}
\begin{proposition}\label{prop:eic_pair2}
Let $\CC$ be a IE-closed subcategory of $\CP(\La)$. Assume that $\CC$ has an $\Ext$-progenerator $\mathsf{P}$ and an $\Ext$-progcoenerator $\mathsf{I}$. Then, the following statements hold.
\begin{enumerate}
    \item $(\mathsf{P}, \mathsf{I})$ is a twin rigid object; 
  \item $\CC = \mathsf{Cok}\mathsf{P} \cap \mathsf{Ker}\mathsf{I}$ holds.
\end{enumerate}
\end{proposition}

\begin{proof}  
$(1)$ By definition, $\mathsf{P}$ and $\mathsf{I}$ are rigid. Since $\mathsf{P}$ is an $\Ext$-injective cogenerator of $\CC$, there exists an exact sequence
\begin{equation*} 
\mathsf{P}_1\st{f}\rightarrowtail \mathsf{P}_0 \twoheadrightarrow \mathsf{I}
    \end{equation*}
with $\mathsf{P}_0, \mathsf{P}_1\in\mathsf{add}\mathsf{P}$. By Lemma~\ref{lem. fac-sub-add}, we have $\mathsf{Im} f\in\mathsf{add} \mathsf{P}$, and hence we obtain the sequence \eqref{eq:twin-rigid-2}. By the dual argument, we obtain the sequence \eqref{eq:twin-rigid-1}.
$(2)$
By use of Proposition \ref{Prop. Cok.Ice-subcatgory} and its dual, every object in $\CC$ is the image of some map $\mathsf{P}' \to \mathsf{I}'$ with $\mathsf{P}' \in \mathsf{add} \mathsf{P}$ and $\mathsf{I}' \in \mathsf{add}\mathsf{I}$. Since $\mathsf{P}', \mathsf{I}' \in \CC$ and $\CC$ is closed under images, we obtain $\mathsf{Cok}\mathsf{P} \cap \mathsf{Ker}\mathsf{I} \subseteq \CC$.
Conversely, every object in $\CC$ belongs to both $\mathsf{Cok}\mathsf{P}$ and $\mathsf{Ker}\mathsf{I}$ since $\mathsf{P}$ is an $\Ext$-progenerator and $\mathsf{I}$ is an $\Ext$-injective cogenerator of $\CC$. Thus $\CC =\mathsf{Cok}\mathsf{P} \cap \mathsf{Ker}\mathsf{I} $ holds.
\end{proof}

\begin{theorem}\label{thm:main-twin-IEclosed123}
  The assignments $\CC \mapsto (\mathsf{P}(\CC), \mathsf{I}(\CC))$ and $(\mathsf{P}, \mathsf{I}) \mapsto \mathsf{Cok} \mathsf{P} \cap \mathsf{Ker} \mathsf{I}$ give bijections between the following two sets:
  \begin{enumerate}
    \item The set of  equivalent classes of IE-closed subcategories $\CC$ of $\CP(\La)$ which  admit an $\Ext$-progenerator  and an $\Ext$-injective cogenerator;
    \item The set of isomorphism classes of basic twin rigid objects $(\mathsf{P}, \mathsf{I})$.
  \end{enumerate}
\end{theorem}
\begin{proof}
This follows immediately from Propositions \ref{prop:eic11} and \ref{prop:eic_pair2}.
\end{proof}

\section{Cotorsion-torsion triples}\label{Section 4}
In this section, we prove the existence of a bijection between cotorsion--torsion triples and tilting objects in $\CP(\Lambda)$. In Proposition~\ref{prop. mono-tor-free}, we show that the morphisms in the right-most subcategory of a cotorsion--torsion triple are monomorphisms. A dual version of our results is also considered.

Let us introduce the following notation to state our next lemma.

\begin{notation}
 Let $\CC$ and $\mathscr{D}$ be subcategories of $\CP(\La)$. We denote by $\mathsf{Cone}(\CC, \mathscr{D})$, the full subcategory of $\CP$ whose objects are those $\mathsf{X}$ for which there is a conflation
 $$ \mathsf{M}_1\rightarrowtail \mathsf{M}_0\twoheadrightarrow \mathsf{X}, $$
where $\mathsf{M}_0 \in \CC,  \mathsf{M}_1 \in \mathscr{D}.$ Dually, we denote by $\mathsf{Fib}(\CC, \mathscr{D})$, the full subcategory of $\CP(\La)$ whose objects $\mathsf{Y}$ admit a conflation $ \mathsf{Y}\rightarrowtail \mathsf{M}_0\twoheadrightarrow \mathsf{M}_1,$ where $\mathsf{M}_0 \in \CC, \mathsf{M}_1 \in \mathscr{D}.$ For an object $\mathsf{M}$ in $\CP(\La)$, we denote $\mathsf{Cone}(\mathsf{M}, \mathsf{M})$ and $\mathsf{Fib}(\mathsf{M}, \mathsf{M})$ respectively when $\CC=\mathscr{D}=\mathsf{addM}$.
\end{notation}

\begin{lemma}\label{Lem. tilt.cotilt.prep}
    Let $\mathsf{T}$ be a rigid object in $\CP(\La).$ Then, the following conditions are equivalent.
    \begin{itemize}
        \item [$(i)$] $\mathsf{T}$ is a tilting object;
        \item [$(ii)$]  $\mathsf{T}^{\bot}=\mathsf{Cone}(\mathsf{T}, \mathsf{T}); $
        \item [$(iii)$]  ${}^{\bot}\mathsf{T}=\mathsf{Fib}(\mathsf{T}, \mathsf{T}). $
    \end{itemize}
\end{lemma}

\begin{proof}
The  result is proved by the items $(iii), (vi)$ and $(vi)'$ of \cite[Theorem 4.3]{GNP1}.
\end{proof}
\begin{definition}
   Let $\CE = (\CA, \CS)$ be an exact category with enough projectives.
A triple $(\CC, \DD, \CF)$ is called a \emph{cotorsion--torsion triple} if $(\CC, \DD)$ is a  (complete) cotorsion pair and $(\DD, \CF)$ is a torsion pair.  
Dually, a \emph{torsion--cotorsion triple} is a triple $(\CC, \DD, \CF)$ such that $(\CC, \DD)$ is a torsion pair and $(\DD, \CF)$ is a cotorsion pair.  
Since the first and third subcategories of such a triple are uniquely determined by the middle subcategory, we may identify a triple with its middle subcategory.
\end{definition}
The  definitions of  cotorsion pairs and  torsion pairs in an exact category are defined in a similar way  for abelian categories, see \cite[Definitions 2.7 and 2.8]{PZZ}.

The following theorem establishes  a bijection between tilting subcategories and cotorsion-torsion triples.  For a proof we refer to \cite[Theorem 3.8 ]{PZZ} or \cite[Theorem 5.2]{GNP1}.

\begin{theorem}\label{bijections}
	Assume $\mathscr{E}$ is weakly idempotent complete. Then there are mutually inverse bijections:
	\begin{align*}
		\{\text{tilting subcategories}\} & \leftrightarrow\{\text{cotrsion-torsion triples}\}\\
		\mathscr{T}& \mapsto (^{\bot}\mathsf{Fac}\mathscr{T},\mathsf{Fac}\mathscr{T}, \mathsf{Fac}\mathscr{T}^{\perp_0})\\
		\mathscr{C} \cap \mathscr{D}& \mapsfrom (\mathscr{C},\mathscr{D}, \CF).
	\end{align*}
    
\end{theorem}

\begin{proposition}\label{pro. cot-tor.triple}
If $\mathsf{T}$ is a tilting object in $\CP(\La)$, then the following statements holds: 
\begin{itemize}
\item [$(1)$] $\mathsf{Cok}\mathsf{T}=\mathsf{Fac}\mathsf{T}$;
\item [$(2)$] $\mathsf{Cok}\mathsf{T}$ is a torsion subcategory; 
\item [$(3)$] $({}^{\perp}(\mathsf{Cok}\mathsf{T}),~~\mathsf{Cok}\mathsf{T})$ is a (complete) cotorsion pair.
\end{itemize}
In particular, the twin pair $(\mathsf{T}, (\La\st{1}\rt \La)\oplus (\La\rt 0) )$ is sent under the bijection in Theorem \ref{thm:main-twin-IEclosed123} to the cotrosion-torsion triple $({}^{\perp}(\mathsf{Cok}\mathsf{T}),~~\mathsf{Cok}\mathsf{T}, ~~(\mathsf{Cok}\mathsf{T})^{\perp_0})$
\end{proposition}
\begin{proof}
$(1)$  It is easy to see we have the following chain of subcategories:
\[\mathsf{Cone}(\mathsf{T}, \mathsf{T}) \subseteq \mathsf{Cok}\mathsf{T} \subseteq \mathsf{Fac}\mathsf{T} \subseteq \mathsf{T}^{\perp}. \]
By Lemma \ref{Lem. tilt.cotilt.prep}, we obtain $\mathsf{Cone}(\mathsf{T}, \mathsf{T})=\mathsf{T}^{\perp}.$
Hence, the identity in $(1)$ is proved.  $(2)$ and $(3)$  follow from Theorem \ref{bijections} in conjunction with $(1)$. The last statement follows from the fact that $\mathsf{Ker}(\La\st{1}\rt \La)\oplus (\La\rt 0) )=\CP(\La).$
\end{proof}

In the next result, we provide further partial information on the third subcategory in the cotorsion-torsion triple $({}^{\perp}\mathsf{Cok}\mathsf{T},\; \mathsf{Cok}\mathsf{T},\; \mathsf{Cok}\mathsf{T}^{\perp_0}).$

\begin{proposition}\label{prop. mono-tor-free}
With the same notation as in Proposition~\ref{pro. cot-tor.triple}, if $(P_1 \xrightarrow{\,f\,} P_0)$ is an object of\/ $\mathsf{Cok}\mathsf{T}^{\perp_0}$, then $ \mathsf{Ker} f=0$. In particular, the projective dimension of $\mathsf{Cok} f$ is at most one. 
\end{proposition}

\begin{proof}
    Let $K := \mathsf{Ker} f$, and let $P \xrightarrow{g} K \to 0$ be an epimorphism. 
Then we have a morphism in $\CP(\Lambda)$
\[
(ig,\,0)\colon (P \to 0) \to (P_1 \xrightarrow{f} P_0),
\]
where $i\colon K \hookrightarrow P_1$ is the inclusion.  
However, the injective object $(P \to 0)$ lies in $\mathsf{Cok}\,\mathsf{T}$. 
This follows from the fact that $\mathsf{Cok}\,\mathsf{T}$ is the left component of a cotorsion pair.  
Since $(\mathsf{Cok}\,\mathsf{T},\,\mathsf{Cok}\,\mathsf{T}^{\perp_0})$ is a torsion pair, the morphism $(ig,\,0)$ must be zero.  
Hence $g=0$, which implies that $K=0$, as desired.    
\end{proof} 

\begin{proposition}\label{prop. bij-twin-tilting-cotortor}
There exist mutually
inverse bijections between the following sets.
\begin{itemize}
\item [$(1)$] Isomorphism classes of basic twin objects with second component to be $(0\rt \La)\oplus (\La\st{1}\rt \La)$ in $\CP(\La)$;
\item [$(2)$] Isomorphism classes of basic tilting objects  in $\CP(\La)$; 
\item [$(3)$] Cotorsion-torsion triples.
\end{itemize}
The maps from $(1)$ to $(2)$  and the converse  are only given by projection  and injection, respectively.   The map from $(2)$ to $(3)$ is given by $\mathsf{T}\mapsto ({}^{\perp}(\mathsf{Cok}\mathsf{T}),\; \mathsf{Cok}\mathsf{T},\; (\mathsf{Cok}\mathsf{T})^{\perp_0})$, and the map from $(3)$ to $(2)$ is given by $ (\CC, \DD, \CF) \mapsto \mathsf{P}(\CC \cap \CF)$.  
\end{proposition}

\begin{proof}
    The maps between $(1)$ and $(2)$ are clearly mutually bijective. 
The mutual bijection between $(2)$ and $(3)$ follows from 
Theorem~\ref{bijections}. 
Here, we note that for a cotorsion--torsion triple $(\CC, \DD, \CF)$ in $\CP(\La)$, 
we have $
\CC \cap \CF = \mathsf{add}\,\mathsf{P}(\CC \cap \CF). $
Hence, we may identify tilting subcategories with basic tilting objects.
\end{proof}

The duality $(-)^* \colon \CP(\La) \to \CP(\La^{\rm op})$ yields dual statements of 
Propositions~\ref{pro. cot-tor.triple} and~\ref{prop. bij-twin-tilting-cotortor}.

\begin{proposition}\label{prop. ker-torsion-cotrosion}
Let $\mathsf{T}$ be a tilting object in $\CP(\La)$. Then the following statements hold: 
\begin{itemize}
\item [$(1)$] $\mathsf{Ker}\mathsf{T}=\mathsf{Sub}\mathsf{T}$, and $\mathsf{Ker}\mathsf{T}$ is a torsion-free subcategory; 
\item [$(2)$] $(\mathsf{Ker}\mathsf{T},~~(\mathsf{Ker}\mathsf{T})^{\perp})$ is a (complete) cotorsion pair.
\end{itemize}
In particular, the twin pair $((\La\st{1}\rt \La)\oplus (0\rt \La), \mathsf{T})$ is sent under the bijection in Theorem \ref{thm:main-twin-IEclosed123} to the torsion-cotorsion triple $({}^{\perp_0}(\mathsf{Ker}\mathsf{T}), ~~\mathsf{Ker}\mathsf{T},~~(\mathsf{Ker}\mathsf{T})^{\perp})$
\end{proposition}
Let $\CC$ be a subcategory in $\CP(\La)$. We denote by $\CC^*$ the image of $\CC$ in $\CP(\La^*)$  under the duality $(-)^*$. 

\begin{proposition}\label{prop. bij-twin-tilting-cotortor}
There exist mutually
inverse bijections between the following sets.
\begin{itemize}
\item [$(1)$] Isomorphism classes of basic twin objects with first component to be $(\La \rt 0)\oplus (\La\st{1}\rt \La)$ in $\CP(\La)$;
\item [$(2)$] Isomorphism classes of basic tilting objects  in $\CP(\La)$; 
\item [$(3)$] Torsion-cotorsion triples.
\end{itemize}
The maps from $(1)$ to $(2)$  and the converse  are only given by projection  and injection, respectively.   The map from $(2)$ to $(3)$ is given by $\mathsf{T}\mapsto ({}^{\perp_0}(\mathsf{Ker}\mathsf{T}), ~~\mathsf{Ker}\mathsf{T},~~(\mathsf{Ker}\mathsf{T})^{\perp})$, and the map from $(3)$ to $(2)$ is given by the composition:
$$ (\CC, \DD, \CF) \mapsto (\CF^*, \DD^*, \CC^*) \mapsto (\mathsf{P}(\CF^* \cap \CC^*))^*.$$  
\end{proposition}

Here, we provide a dual version of  Proposition~\ref{prop. mono-tor-free} formulated in terms of the transpose functor.

\begin{proposition}
    With the same notation as in Proposition~\ref{prop. ker-torsion-cotrosion}, if $(P_1 \xrightarrow{\,f\,} P_0)$ is an object of~${}^{\perp_0}(\mathsf{Ker}\mathsf{T})$, then projective dimension of $\mathsf{Tr}(\mathsf{Cok}f)$ is at most one.
    \end{proposition}
\begin{proof}
  Let $(P_1 \xrightarrow{\,f\,} P_0)$ be an object of~${}^{\perp_0}(\mathsf{Ker}\mathsf{T})$. Applying the duality $(-)^* \colon \CP(\La) \to \CP(\La^{\rm op})$, we obtain $((P_0)^*\st{f^*}\rt (P_1)^*)$ as an object of $({}^{\perp_0}(\mathsf{Ker}\mathsf{T}))^*=(\mathsf{Cok}\mathsf{T}^*)^{\perp_0}$. Hence, by Proposition \ref{prop. mono-tor-free}, $f^*$ is injective. It follows that $\mathsf{Cok}f^*$ has dimension at most one. Since $\mathsf{Tr}(\mathsf{Cok}f)$ is a direct summand of $\mathsf{Cok}f^*$, this completes the proof.  
\end{proof}

\section{$P$-right exact sequences}

In this section, we introduce the notion of $P$-right exact sequences, which is the main ingredient for the definition of rICE-closed subcategories, which will be defined in the next section. Over hereditary algebras, we provide a criterion to determine $P$-right exact sequences.

\vspace{.3 cm}

\subsection{$P$-right exact sequences}
Let $(A_3, J )$ be the quiver $A_3:v_2\st{a}\rt v_1\st{b}\rt v_0$  with relation $J$ generated by $ab$. By definition, a representation $M$ of $(A_3, J)$ over $\mmod \La$ is a diagram

$$M:M_2\st{f_2}\rt M_1\st{f_1}\rt M_1$$
of $\La$-modules and $\La$-homorphisms with $f_1f_2=0.$ A morphism between the representations $M$ and $N$ is a triple $\alpha=(\alpha_2, \alpha_1, \alpha_0)$ of $\La$-homorphisms such that the diagram

\[
\xymatrix{	  M_2 \ar[d]^{\alpha_2} \ar[r]^{f_2} &  M_1 \ar[d]^{\alpha_1}
\ar[r]^{f_1} &  M_0\ar[d]^{\alpha_0} & \\
	 N_2 \ar[r]^{g_2} & N_1
	 \ar[r]^{g_1} & N_0   & }
\]
is commutative. Denote by ${\rm rep}(A_3, J, \La)$ the category of all representations of $(A_3, J )$ over $\mmod \La.$

It is not difficult to see that indecomposable projective objects of ${\rm rep}(A_3, J, \La)$ are either $(P\st{1}\rt P\rt 0), (0 \rt P\st{1}\rt P)$ or $(0\rt 0\rt P)$, where $P$ is an indecomposable projective $\La$-module.  Let $\la: L \st{f }\rt M\st{g} \rt N \rt  0$ be a right exact sequence in $\mmod \La.$  We denote by  $\Omega^2\la:(L_2\st{\Omega^2f}\rt M_2\st{\Omega^2g}\rt N_2)$  the second syzygy of $\la$ in ${\rm rep}(A_3, J, \Lambda)$.

\hspace{ 1cm}

 The following construction is crucial in the rest of the paper.

\begin{construction}\label{Construction5}
    Let $\la: L \st{f }\rt M\st{g} \rt N \rt  0$ be a right exact sequence in $\mmod \La.$ Let $K:=\mathsf{Ker} g=\mathsf{Im} f.$ By using the horseshoe lemma, we get the following commutative diagram:

\[\begin{tikzcd}[column sep=10pt]
	&& 0 & 0 & 0 \\
	0 & 0 & {\Omega_{\Lambda}(K)\oplus Q'} & {\Omega_{\Lambda}(M)\oplus Q} & {\Omega_{\Lambda}(N)} & 0 \\
	{\Omega_{\Lambda}(L)} & 0 & {P_L} & {P_L\oplus P_N} & {P_N} & 0 \\
	{P_L} & 0 & K & M & N & 0 \\
	L && 0 & 0 & 0 \\
	0
	\arrow[draw=none, from=1-3, to=1-4]
	\arrow[from=1-3, to=2-3]
	\arrow[draw=none, from=1-4, to=1-5]
	\arrow[from=1-4, to=2-4]
	\arrow[from=1-5, to=2-5]
	\arrow[from=2-1, to=3-1]
	\arrow[from=2-2, to=2-3]
	\arrow["\psi", from=2-3, to=2-4]
	\arrow[from=2-3, to=3-3]
	\arrow[from=2-4, to=2-5]
	\arrow[from=2-4, to=3-4]
	\arrow[from=2-5, to=2-6]
	\arrow[from=2-5, to=3-5]
	\arrow["\phi", hook, from=3-1, to=2-3]
	\arrow[from=3-1, to=4-1]
	\arrow[from=3-2, to=3-3]
	\arrow[from=3-3, to=3-4]
	\arrow[from=3-3, to=4-3]
	\arrow[from=3-4, to=3-5]
	\arrow[from=3-4, to=4-4]
	\arrow[from=3-5, to=3-6]
	\arrow[from=3-5, to=4-5]
	\arrow[Rightarrow, no head, from=4-1, to=3-3]
	\arrow[from=4-1, to=5-1]
	\arrow[from=4-2, to=4-3]
	\arrow[from=4-3, to=4-4]
	\arrow[from=4-3, to=5-3]
	\arrow["g", from=4-4, to=4-5]
	\arrow[from=4-4, to=5-4]
	\arrow[from=4-5, to=4-6]
	\arrow[from=4-5, to=5-5]
	\arrow[two heads, from=5-1, to=4-3]
	\arrow[from=5-1, to=6-1]
	\arrow[draw=none, from=5-3, to=5-4]
	\arrow[draw=none, from=5-4, to=5-5]
\end{tikzcd}\]

Here, $Q$ and $Q'$ are projective. Now consider the following commutative diagram

\[\begin{tikzcd}[column sep=10pt]
	& 0 & 0 & 0 \\
	0 & {P_{\Omega_{\Lambda}(L)}} & {P_{\Omega_{\Lambda}(L)}\oplus P_Y} & {P_Y} & 0 \\
	0 & {\Omega_{\Lambda}(L)} & {\Omega_{\Lambda}(M)}\oplus Q & {Y:={\rm Cok} \psi\phi} & 0 \\
	0 & {\Omega_{\Lambda}(K) \oplus Q' } & {\Omega_{\Lambda}(M) \oplus Q} & {\Omega_{\Lambda}(N)} & 0 \\
	& 0 & 0 & 0
	\arrow[from=2-1, to=2-2]
	\arrow[from=2-2, to=2-3]
	\arrow[from=2-3, to=2-4]
	\arrow[from=2-4, to=2-5]
	\arrow[from=3-1, to=3-2]
	\arrow["{\psi\phi}", from=3-2, to=3-3]
	\arrow[from=3-3, to=3-4]
	\arrow[from=4-1, to=4-2]
	\arrow[from=4-2, to=4-3]
	\arrow[from=4-3, to=4-4]
	\arrow[from=4-4, to=4-5]
	\arrow[draw=none, from=5-2, to=5-3]
	\arrow[draw=none, from=5-3, to=5-4]
	\arrow[draw=none, from=1-2, to=1-3]
	\arrow[draw=none, from=1-3, to=1-4]
	\arrow[from=1-2, to=2-2]
	\arrow[from=2-2, to=3-2]
	\arrow[from=1-3, to=2-3]
	\arrow[from=1-4, to=2-4]
	\arrow["{p_Y}", two heads, from=2-4, to=3-4]
	\arrow[from=4-2, to=5-2]
	\arrow[from=4-3, to=5-3]
	\arrow[from=4-4, to=5-4]
	\arrow[from=2-3, to=3-3]
	\arrow[from=3-4, to=3-5]
	\arrow[hook, from=3-2, to=4-2]
	\arrow[Rightarrow, no head, from=3-3, to=4-3]
	\arrow["h", from=3-4, to=4-4]
\end{tikzcd}\]
Then, we have the following commutative diagram:

\begin{equation}\label{equa.thelast.construct.}
\begin{tikzcd}[column sep=10pt]
	0 & {P_{\Omega_{\Lambda}(L)}} && {P_{\Omega_{\Lambda}(L)}\oplus P_Y} && {P_Y} & 0 \\
	{\Omega_{\Lambda}(L)} && {\Omega_{\Lambda}(M)\oplus Q} && {\Omega_{\Lambda}(N)} \\
	0 & {P_L} && {P_L\oplus P_N} && {P_N} & 0 \\
	& L && M && N & 0 \\
	& 0 && 0 && 0
	\arrow[from=4-6, to=4-7]
	\arrow[from=4-6, to=5-6]
	\arrow[from=3-6, to=3-7]
	\arrow["h", from=3-6, to=4-6]
	\arrow["c", from=1-6, to=3-6]
	\arrow["{hp_Y}"', two heads, from=1-6, to=2-5]
	\arrow[hook, from=2-5, to=3-6]
	\arrow[two heads, from=1-4, to=2-3]
	\arrow[two heads, from=1-2, to=2-1]
	\arrow["{ }"', hook, from=2-1, to=3-2]
	\arrow[from=1-1, to=1-2]
	\arrow[from=1-2, to=1-4]
	\arrow[from=1-4, to=1-6]
	\arrow[from=3-1, to=3-2]
	\arrow[from=3-4, to=3-6]
	\arrow[from=3-2, to=3-4]
	\arrow["a", from=1-2, to=3-2]
	\arrow["b", from=1-4, to=3-4]
	\arrow[from=3-2, to=4-2]
	\arrow[from=4-2, to=5-2]
	\arrow[from=1-6, to=1-7]
	\arrow[from=4-4, to=5-4]
	\arrow[from=3-4, to=4-4]
	\arrow[from=4-2, to=4-4]
	\arrow[from=4-4, to=4-6]
	\arrow[hook, from=2-3, to=3-4]
\end{tikzcd}
\end{equation}

 \end{construction}

Hence,  the morphisms $a, b$ and $c$ provide projective presentations for $L, M$, and $N$, respectively.  By  applying the kernel functor to the above commutative diagram,  we obtain  the following left exact sequence

  $$\la': \  \ 0 \rt \Omega^2_{\La}(L) \rt \Omega^2_{\La}(M)\oplus P_1 \rt \Omega^2_{\La}(N)\oplus P_0 $$
  in $\mmod \La,$ where $P_1, P_0 \in {\rm proj}\mbox{-}\La.$  We may obtain different such left exact sequences by our construction. Subsequently, we investigate the uniqueness of one such left exact sequence. To do this, we must first make the necessary preparations as follows.

\vspace{1 mm}

We can regard the right exact sequence $\lambda: L \st{f }\rt M\st{g} \rt N \rt  0$ as an object in ${\rm rep}(A_3, J, \Lambda)$. Furthermore, the last diagram \eqref{equa.thelast.construct.} in the construction leads to the following projective presentation of $\lambda$ in ${\rm rep}(A_3, J, \Lambda)$:
\begin{equation}\label{equ. thelastCon2}
{\small (P_{\Omega_{\La}(L)}\st{1}\rt P_{\Omega_{\La}(L)}\rt 0)\oplus (0 \rt P_Y\st{1}\rt P_Y)\rt (P_L\st{1}\rt P_L\rt 0)\oplus (0 \rt P_N\st{1}\rt P_N)\rt \la \rt 0. }
\end{equation}
Thus, a minimal projective presentation of $\la$ in ${\rm rep}(A_3, J, \Lambda)$ is obtained by removing some projective summands from the two terms of the  projective presentation in  (\ref{equ. thelastCon2}).

 In particular, according to the above observation,  we will get  the following information for the terms of  $\Omega^2\la:(L_2\st{\Omega^2f}\rt M_2\st{\Omega^2g}\rt N_2)$, the second syzygy of $\la$  in ${\rm rep}(A_3, J, \Lambda)$:
 \begin{itemize}
     \item [$(1)$]  $L_2=\Omega^2_{\La}(L)$;
     \item [$(2)$] $M_2=\Omega^2_{\La}(M)\oplus Q_1$ and $N_2=\Omega^2_{\La}(N)\oplus Q_0$, where $Q_1$ and $Q_0$ are   direct summands of $P_0, P_1,$ respectively;
 \end{itemize}
 \noindent
 and moreover, $0 \rt L_2\st{\Omega^2f}\rt M_2\st{\Omega^2g}\rt N_2$ is a left exact sequence.  Due to the uniqueness of a minimal projective presentation, the projective direct summands $Q_1, Q_0$ are uniquely determined by $\la$ up to isomorphism. Therefore,  we can refer to them as $Q_1(\la)$ and $Q_0(\la)$, respectively.

\hspace{1 cm}

\begin{definition}\label{def. typ0-1-P-right-exact}
Let $P$ be a projective $\La$-module. A right short exact sequence $\la:L\rt M\rt N \rt 0$ is called $P$-right exact of type 0 (resp. type 1) if  $Q_0(\la)$ (resp. of $Q_1(\la))$  belongs to $\mathsf{add}P.$ We say that it is a $P$-right exact if it is of both types, i.e, if both $Q_0(\la)$ and  $Q_1(\la)$  belong to $\mathsf{add}P.$
\end{definition}

According to our definition, for any right exact sequence $\la$,  letting $P=Q_0(\la)\oplus Q_1(\la)$, the sequence $\la$ is a $Q$-right exact sequence for any projective module $Q$ such that $Q\in \mathsf{add}P.$ Moreover, the exact sequence in (\ref{equ. thelastCon2}) induces an exact sequence in $\CP(\La)$, as stated in the following lemma.

\begin{lemma}
Let $\la \colon L \rightarrow M \rightarrow N \rightarrow 0$ be a right short exact sequence in $\mmod \La$.  
Then there exists a short exact sequence
\[
\delta \colon 0 \rightarrow \mathsf{L} \rightarrow \mathsf{M} \rightarrow \mathsf{N} \rightarrow 0
\]
in $\CP(\La)$ such that $\mathsf{Cok}(\delta)=\la$, and $\mathsf{M}_c\simeq (Q_1(\la)\rightarrow 0),~\mathsf{N}_c\simeq (Q_0(\la)\rightarrow 0).$
\end{lemma}

It is natural to ask what more we can learn about the second syzygy $\Omega^2(\lambda)$ when $\lambda$ is a short exact sequence. Below, we provide some additional information on this matter.

\begin{proposition}\label{prop. short-exact-seq}
    Let $\la:0 \rt L \rt M \rt N \rt 0$ be a short exact sequence in $\mmod \La.$ Then, the second sygyzy $\Omega^2(\la)$ is also a short exact sequence and $Q_0(\la)=0$. In particular, $\Omega^2(\la)$ has the following form:
    $$0 \rt \Omega^2_{\La}(L)  \rt \Omega^2_{\La}(M)\oplus Q_1(\la) \rt \Omega^2_{\La}(N)\rt 0.$$
\end{proposition}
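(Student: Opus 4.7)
The plan is to exploit Construction \ref{Construction5} under the stronger hypothesis that $\lambda$ is short exact, show that the projective presentation of $\lambda$ in ${\rm rep}(A_3, J, \La)$ it produces is already minimal, and then read off both conclusions vertex by vertex.

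First, when $f$ is injective the canonical surjection $L \twoheadrightarrow K = \Ker g$ is an isomorphism, so in the construction one may take $Q'=0$ and $\phi:\Omega_{\La}(L)\to \Omega_{\La}(K)\oplus Q'$ to be the identity. Consequently $\psi\phi$ coincides with the horseshoe inclusion $\Omega_{\La}(L)\hookrightarrow \Omega_{\La}(M)\oplus Q$, whose cokernel is $\Omega_{\La}(N)$ by the horseshoe sequence. This identifies $Y = \Omega_{\La}(N)$ and $P_Y = P_{\Omega_{\La}(N)}$, so the presentation \eqref{equ. thelastCon2} specialises to
\[
 P_1 := \bigl(P_{\Omega_{\La}(L)}\stackrel{1}{\to} P_{\Omega_{\La}(L)}\to 0\bigr)\oplus\bigl(0\to P_{\Omega_{\La}(N)}\stackrel{1}{\to} P_{\Omega_{\La}(N)}\bigr)\to P_0\to \lambda \to 0,
\]
with $P_0=(P_L\stackrel{1}{\to} P_L\to 0)\oplus(0\to P_N\stackrel{1}{\to} P_N)$.

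Next, I would verify that this presentation is minimal by computing tops vertex by vertex. For $\lambda$ the top is $({\rm top}\,L,\,{\rm top}\,N,\,0)$, using $M/{\rm Im}\,f\cong N$ and the surjectivity of $g$. For $\Omega(\lambda)$, whose terms are $\Omega_{\La}(L)$, $\Omega_{\La}(M)\oplus Q$, $\Omega_{\La}(N)$ linked by the horseshoe maps, the top at $v_1$ is ${\rm top}\bigl((\Omega_{\La}(M)\oplus Q)/\Omega_{\La}(L)\bigr)={\rm top}\,\Omega_{\La}(N)$, and the top at $v_0$ vanishes because the horseshoe map $\Omega_{\La}(M)\oplus Q\twoheadrightarrow \Omega_{\La}(N)$ is surjective. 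These tops match the covers $P_0$ and $P_1$, so no summand can be stripped.

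With minimality in hand, $\Omega^2(\lambda)=\ker(P_1\to P_0)$ can be computed at each vertex. At $v_0$ the underlying $\La$-map is $P_{\Omega_{\La}(N)}\twoheadrightarrow \Omega_{\La}(N)\hookrightarrow P_N$, whose kernel is $\Omega^2_{\La}(N)$; hence $N_2=\Omega^2_{\La}(N)$, which gives $Q_0(\lambda)=0$. At $v_2$ the kernel is $\Omega^2_{\La}(L)$, and at $v_1$ a standard Schanuel-type comparison identifies the kernel as $\Omega^2_{\La}(M)\oplus Q_1(\lambda)$. To upgrade this to the short exactness of the resulting module sequence, I would apply the $3\times 3$ lemma to the short exact sequence of representations $0\to \Omega^2(\lambda)\to P_1\to \Omega(\lambda)\to 0$: each indecomposable summand of $P_1$, of the form $(P\stackrel{1}{\to} P\to 0)$ or $(0\to P\stackrel{1}{\to} P)$, is already a short exact sequence of $\La$-modules, and $\Omega(\lambda)$ is short exact by the horseshoe lemma, so the top row is short exact as well. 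The main technical point I expect to negotiate carefully is the identification $Y=\Omega_{\La}(N)$ together with the minimality verification; everything after that is a diagram chase.
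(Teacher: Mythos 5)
Your proof is correct and follows the same route as the paper: apply the horseshoe lemma to the short exact sequence $\la$ to produce a projective presentation of $\la$ in ${\rm rep}(A_3,J,\La)$ whose kernel has exactly $\Omega^2_{\La}(N)$ at the vertex $v_0$, then conclude $Q_0(\la)=0$ and read off short exactness from the resulting $3\times 3$ diagram. The one local difference is how minimality is handled: you verify directly, by computing tops vertex by vertex, that the constructed presentation is minimal, whereas the paper sidesteps this by noting that a minimal projective presentation is a direct summand of any projective presentation, so $\Omega^2(\la)$ is a direct summand of the computed kernel $\Delta=(\Omega^2_{\La}(L)\to\Omega^2_{\La}(M)\oplus P\to\Omega^2_{\La}(N))$, and Krull--Schmidt at $v_0$ forces $Q_0(\la)=0$. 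Your version costs the top computation but pins down $\Omega^2(\la)$ on the nose rather than up to a direct summand; both arguments are sound.
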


\begin{proof}
  Let $P_1\st{f}\rt P_0\rt L\rt 0 $ and $Q_1\st{g}\rt Q_0\rt N\rt 0$ be  minimal projective presentations of $L$ and $N$, respectively. Thanks to the horseshoe lemma, we obtain the following commutative diagram
$$\xymatrix{	0 \ar[r] & P_1 \ar[d]^f \ar[r] & P_1\oplus Q_1 \ar[d]^h
\ar[r] &  Q_1\ar[d]^g \ar[r] & 0\\
	0 \ar[r] & P_0 \ar[d] \ar[r] & P_0\oplus Q_0
	\ar[d] \ar[r] & Q_0 \ar[d] \ar[r] & 0\\
0 \ar[r]  & L \ar[d] \ar[r] & M
	\ar[d] \ar[r] & N   \ar[d] \ar[r] & 0\\
& 0  & 0  & 0 & }
$$

We then follow a similar observation discussed in Construction \ref{Construction5} to obtain a projective presentation as given in \eqref{equ. thelastCon2}. We can deduce that the diagram induces a projective presentation for \(\lambda\) as an object in \({\rm rep}(A_3, J, \Lambda)\). By taking the kernel of the obtained projective presentation in \({\rm rep}(A_3, J, \Lambda)\), we obtain the object \(\Delta = (\Omega^2_{\Lambda}(L) \to \Omega^2(M) \oplus P \to \Omega^2_{\Lambda}(N))\) for some projective module \(P\). Due to the property of a minimal projective presentation, which can be considered as a direct summand of any projective presentation, we infer that \(\Omega^2(\lambda)\) is isomorphic to a direct summand of \(\Delta\). This completes the proof.
\end{proof}

Deciding a right exact sequence is a $P$-right exact sequence for which projective module $P$ is somewhat difficult, since we have  to calculate the second syzygy in another category of the abelian ${\rm rep}(A_3, J, \La)$. However, the situation is easier when the algebra is hereditary.

\begin{proposition}\label{Prop. hereditary.P.right}
    Let $\La$ be a hereditary algebra, and let   $\la:L \st{f}\rt M \rt N \rt 0$ be a right  exact sequence in $\mmod \La.$ Set $P=\mathsf{Ker}f$. Then $\la$ is a $P$-right exact sequence.
\end{proposition}
\begin{proof}
Since $\La$ is a hereditary algebra, one can show that the abelian category
${\rm rep}(A_3, J, \La)$ has global dimension~$2$. 
We view $\la$ as an object in this abelian category. 

Since the global dimension of ${\rm rep}(A_3, J, \La)$ is two, 
the second syzygy $\Omega^2 \la$ must be a projective object. 
Using the characterization of projective objects in 
${\rm rep}(A_3, J, \La)$, we infer that $\Omega^2 \la$ is isomorphic to
\[
(P_1\st{1}\rt P_1 \rt 0)\oplus (0 \to P_2 \xrightarrow{\,1\,} P_2)\;\oplus\; (0 \to 0 \to P_3).
\]
However, $(P_1\st{1}\rt P_1 \rt 0)\oplus (0 \to P_2 \xrightarrow{\,1\,} P_2)$ is an injective object in 
${\rm rep}(A_3, J, \La)$. Therefore, $P_1, P_2$ must be zero; otherwise, this would
contradict the fact that $\Omega^2 \la$ is obtained from a minimal projective
resolution of $\la$. So, $Q_1(\la)=0$ and $Q_0(\la)=P_3=\mathsf{Ker} f$. We are done.
\end{proof}

\begin{corollary}\label{Coro. here. Pright.}
Let $\La$ be a hereditary algebra.
Then every short exact sequence
\[
    0 \rt L \rt   M \rt N \rt 0
\]
in $\mmod \La$ is a $P$-right exact sequence for any projective module $P$.

\end{corollary}

\section{ $r$ICE-closed subcategories}\label{Section 6}

In this section, we define the notion of rICE-closed subcategories in the module category $\mmod \Lambda$. Then, we show that an rICE-closed subcategory is indeed the image of an ICE-closed subcategory in $\CP(\Lambda)$. Additionally, we prove that rigid objects in $\CP(\Lambda)$ and $\tau$-rigid modules are in bijection. All of this leads to a version of Enomoto's theorem \cite[Theorem 2.3]{En} for general Artin algebras, providing a bijection between rICE-closed subcategories and basic $\tau$-rigid modules.

 \subsection{ rICE-closed subcategories in $\mmod \La$}
We first need to introduce $P$-morphism with respect to a basic projective module $P$

\begin{definition}\label{Def. P-morphism1245}
Let $P$ be a basic projective module.  A  morphism $f \colon M \to N$ is called a \emph{$P$-morphism} if it  admits a factorization
    \[
    \begin{tikzcd}
        {K } & M && N & {H} & 0 \\
        && D
        \arrow[from=1-1, to=1-2]
        \arrow["f", from=1-2, to=1-4]
        \arrow["g", two heads, from=1-2, to=2-3]
        \arrow[from=1-4, to=1-5]
        \arrow[from=1-5, to=1-6]
        \arrow["h", from=2-3, to=1-4]
    \end{tikzcd}
    \]
    such that the right exact sequences
    \[
        K\rightarrow M \xrightarrow{g} D \rightarrow 0
        \quad \text{and} \quad
        D \xrightarrow{h} N \rightarrow H \rightarrow 0
    \]
    are,  respectively, $P$-right exact and $P$-right exact sequence  of type $1$ (where $h$ need not be a monomorphism, but $g$ must be an epimorphism). Note that $H$ need not be the cokernel of $f$, but it is the cokernel of $h$.
\end{definition}

Now, we can define  the following kinds of subcategories  which play important role in our paper.

\begin{definition}\label{Def. ICE-Mor-Proj}
Let $P$ be a basic projective module and let $\CM$ be a subcategory of $\mmod \La$.
We say that $\CM$ is an \emph{ rICE-closed subcategory} if the following conditions hold.
\begin{itemize}
    \item[$(1)$] 
    For every $P$-right exact sequence of type $0$ in $\mmod \La$
    \[
        M \longrightarrow N \longrightarrow L \longrightarrow 0,
    \]
    if $M, L \in \CM$, then $N \in \CM$.
    In this case, we say that $\CM$ is \emph{closed under $P$-right exact sequences (of type 0)}.

    \item[$(2)$] 
    For every $P$-morphism $f \colon M \to N$ in $\CM$, every module $D$ (resp. $H$) appearing in the factorization as in Definition \ref{Def. P-morphism1245} belongs to $\CM$. In this case, we say that $\CM$ is \emph{closed under images of $P$-morphisms} (resp. {closed under cokernels of $P$-morphisms}).
    
    \item[$(3)$] Every $P$-right exact sequence of type $0$  with terms in $\CM$  is $P$-right exact.

    \item [$(4)$]  The projective module $P$ is \emph{maximal} subject to the properties $(1), (2)~ \text{and} ~(3)$,  that is,
    for every  projective module $Q$ that  $\CM$ satisfying these  three conditions, then   $Q \in \mathsf{add}\, P$.
\end{itemize}
\end{definition}

If $P = \Lambda$ in the above definition, any right exact sequence is automatically a \( P \)-right exact sequence. Hence, in this case, an rICE-closed subcategory  has the following properties. For every right exact sequence \( M \to N \to L \to 0 \) with \( M, L \in \mathscr{M} \), we have \( N \in \mathscr{M} \). Furthermore, for any morphism \( f: M \to N \) in $\CM$ with a factorization \( f: M \xrightarrow{g} D \xrightarrow{h} N \) such that \( g \) is an epimorphism, we have \( D \in \mathscr{M} \). In this case, one can easily see that $\CM$ is an ICE-closed subcategory in $\mmod \La$ (in the usual senses, see Definition \ref{ICE}).

 It may be of interest to determine  rICE-closed subcategories for certain classes of algebras. By Corollary \ref{Coro. here. Pright.}, on a hereditary algebra,  we can easily see that every rICE-closed subcategory is also an ICE-closed subcategory. The converse is  true, as we   will observe in the sequel. Hence, over hereditary algebras, ICE-closed subcategories coincide with rICE-closed subcategories.

\begin{remark}
  We note that if a subcategory $\CM$ is  rICE-closed, then by condition~$(3)$ in the definition, the projective module $P$ is uniquely determined up to isomorphism. In this case, we say $\CM$ is an rICE-closed subcategory with projective module $P.$ Moreover, due to  condition~$(3)$, the second  right exact sequences appearing in  condition~$(2)$ are indeed $P$-right exact.  We also note that the lowercase ``r'' reflects the fact that the notion of rICE-closed subcategories is defined via certain right exact sequences.   
\end{remark}

We also need to revert the notion of ICE-closed subcategories with enough Ext-progenerator in $\CP(\La)$ to $\mmod \La.$ To do this we define the following notation.

\begin{definition}\label{Def. Pext-projective}

Let  $\CM$ be  an rICE-closed subcategory with projective module $P.$  Let $Q$ be a  projective module  such that ${\Hom_{\La}}(Q, M)=0$ for every $M \in \CM.$
\begin{itemize}
\item We call a module $Z \in \CM$, \emph{$r\Ext$-projective in $\CM$} if for every $P$-right exact sequence $X\st{f}\rt Y\st{g}\rt Z\rt 0$ with $Q_0(\la) \in \mathsf{add} \ Q$ in $\CM$,  we  have $f$ is a section   and $g$ a retraction. We denote by $r\mathsf{P}(\CM)$ the subcategory of $\CM$ consisting of all  $r\Ext$-projective modules in $\CM.$

\item  We say $\CM$ has enough $r\Ext$-projective if for every module  $X \in \CM$, there is a $P$-right  exact sequence
$$\la: N\rt Y\rt X\rt 0, $$
such that $Q_1(\la) \in \mathsf{add}Q,\  N\in \CM$ and $Y \in r\mathsf{P}(\CM)$.
    \item We say that $\CM$ has a \emph{r$\Ext$-progenerator} if $\CM$ has enough  $r\Ext$-projectives, and there exists  $M \in \CM$ such that $\mathsf{add}\ M=r\mathsf{P}(\CM)$. Moreover, let $M_0$ be a maximal projective direct summand of $M$. There is a $M_0^*$-right exact sequence in $\mmod \La^{\rm op}$
\begin{equation}\label{eq. def-pexts}
       P^*\rt \mathsf{Tr}M_1\oplus Q^*_1 \rt \mathsf{Tr}M_2 \oplus Q^*_2,
  \end{equation}  
    where $Q_1, Q_2 \in \mathsf{add}\ Q$ and $M_1, M_2 \in \mathsf{add}\ M.$     In this case,  we say $M$ is a \emph{$r\Ext$-progenerator} (along with $Q$).
\end{itemize}
\end{definition}

We first need the next two lemmas.  

\begin{lemma}\label{lem. istau-tilting objects}
    Let $M$ be a module and $P$ a projective module. Then, ${\Hom_{\La}}(P, M)=0$ if and only if ${\Hom_{\La}}(M, \nu P)=0.$
\end{lemma}
\begin{proof}
  $(\Rightarrow)$ Assume that there exists a non-zero map $g: M \rightarrow \nu P$. Then we have a non-section morphism $(0, 1): (0 \rightarrow \nu P) \rightarrow (\text{Im} g \hookrightarrow \nu P)$ in the morphism category $\mathscr{H}$. Due to the almost split morphism given in \cite[Proposition 5.2]{HW}, we have the following:
\begin{equation}\label{diag. exa. lem}
\xymatrix@1{   (0 \rightarrow \nu P)
			\overset{(0,~1)}\rightarrowtail
			  (P\stackrel{f}\rightarrow \nu P)\overset{(1,~0)}\twoheadrightarrow
			(P\rightarrow 0). }
\end{equation}
The non-section morphism $(0, 1)$ has to be factored through the monomorphism in the almost split sequence \eqref{diag. exa. lem}. This gives us a non-zero map from $P$ to $\text{Im} g$, which in turn lifts to a non-zero map from $P$ to $M$. However, this contradicts the hypothesis that ${\Hom_{\Lambda}}(P, M) = 0$.

$(\Leftarrow)$ Assume that there exists a non-zero map $h: P \rightarrow M$. It induces a non-retraction $(1, 0): (P \stackrel{h}{\rightarrow} M) \rightarrow (P \rightarrow 0)$. Hence, it factors through the epimorphism in the sequence \eqref{diag. exa. lem} via a morphism, denoted as $(1, d): (P \stackrel{h}{\rightarrow} M) \rightarrow (P \stackrel{f}{\rightarrow} \nu P)$. This gives $dh = f$. However, according to our hypothesis, $d = 0$. It follows that $f = 0$, which is a contradiction.
\end{proof}

\medskip

\begin{lemma}\label{lem. vanishing-Hom-Ext}
Let $P$ be a projective module and $\mathsf{M}$ an object in $\CP(\La)$. Then, ${\Hom_{\La}}(P, \mathsf{Cok}\mathsf{M})=0$ if and only if ${\Ext^1_{\CP}}((P\rt 0), \mathsf{M})=0$.
\end{lemma}

\begin{proof}
 Let $\mathsf{P}=(P\rt 0)$. By  \cite[Proposition 5.2]{HW},  $\tau_{\mathscr{P}} \mathsf{P}=(P_1\st{d}\rt P_0)$, with $\mathsf{Cok} d=\nu P.$ We have 
\begin{align*}
     \Ext^1_{\CP}( \mathsf{P}, \mathsf{M}) & \simeq D\overline{{\rm Hom}}_{\mathscr{P}}(\mathsf{M}, \tau_{\mathscr{P}} \mathsf{P}) \\ 
     & \simeq D\Hom_{\Lambda}(\mathsf{Cok}\mathsf{M}, \mathsf{Cok}\tau_{\mathscr{P}}\mathsf{P})\\ & = D{\Hom_{\Lambda}}(\mathsf{Cok}\mathsf{M}, \nu P). 
\end{align*}
         
Hence, $\Ext^1_{\CP}(\mathsf{P}, \mathsf{M})=0$ if and only if ${\Hom_{\La}}(\mathsf{Cok}\mathsf{M}, \nu P)=0$. On the other hand, by Lemma \ref{lem. istau-tilting objects}, we know that ${\Hom_{\La}}(\mathsf{Cok}\mathsf{M}, \nu P)=0$ if and only if ${\Hom_{\La}}(P, \mathsf{Cok}\mathsf{M})=0$. Thus, we obtain the desired result.
\end{proof}

Let \( M \) be a module with a minimal projective presentation \( P_1 \xrightarrow{f} P_0 \to M \to 0 \). We denote by \( \mathsf{X}_M \) the object \( (P_1 \xrightarrow{f} P_0) \) in \( \CP(\Lambda) \). Let \( \CM \) be an rICE-closed subcategory with projective module $P$.  We associate the following subcategory in \( \CP(\Lambda) \) to \( \CM \):

$$\mathsf{ICE}(\CM)=\mathsf{add}\{ \mathsf{X}_M\mid M \in \CM\}\cup \{(P\rt 0), (\La\st{1}\rt \La)\}.$$

\begin{proposition}\label{prop. ICE-Mor-relative-ICE}
Let  $\CM$ be  an rICE-closed subcategory  with  projective module $P$ (resp.  has a $r\Ext$-progenerator) in  \( \mmod \Lambda \). Then $\mathsf{ICE}(\CM)$ is an ICE-closed subcategory (resp. has a $\Ext$-progenerator) in $\CP(\La).$
\end{proposition}
\begin{proof} Assume that \(\CM\) is an rICE-closed subcategory with  projective module \(P\). We show that \(\mathsf{ICE}(\CM)\) is closed under extensions. Consider a conflation

\[
\lambda: \mathsf{X} \rightarrowtail \mathsf{Y} \twoheadrightarrow \mathsf{Z}
\]
\noindent
in $\CP$ with \(\mathsf{X}, \mathsf{Z} \in \mathsf{ICE}(\CM)\). By taking the cokernel, we obtain the right exact sequence

\[
\mathsf{Cok}\lambda: \mathsf{Cok}\mathsf{X} \to \mathsf{Cok}\mathsf{Y} \to \mathsf{Cok}\mathsf{Z} \to 0
\]
\noindent
in \(\mmod \Lambda\). Since $\mathsf{Z}_c \in \mathsf{add}(P\rt 0),$ we infer   that the right exact sequence  $\mathsf{Cok}\lambda$ is a $P$-right exact sequence of type $0$.  This implies that \(\mathsf{Cok}\mathsf{Y}\) lies in \(\CM\), due to the first condition of an rICE-closed subcategory. Hence, \(\mathsf{Y}_0 \oplus \mathsf{Y}_a \in \mathsf{ICE}(\CM)\). Also, $\mathsf{Y}_b \in \mathsf{ICE}(\CM)$  since $(\La \st{1} \rt  \La)$ is in $\mathsf{ICE}(\CM)$. It remains to show that $\mathsf{Y}_c \in \mathsf{ICE}(\CM).$

The sequence \(\lambda\) induces a projective presentation of the right exact sequence \(\mathsf{Cok}\lambda\) as an object in \({\rm rep}(A_3, J, \Lambda)\). Indeed, assume that $\mathsf{X}=(P_1\st{f}\rt P_0), \mathsf{Z}=(Q_1\st{g}\rt Q_0)$ and $\mathsf{Y}=(P_1\oplus Q_1 \st{h}\rt P_0\oplus Q_0)$. The  sequence $\la$, together with  the snake lemma,  yields  the following communicative diagram:

$$\xymatrix{0 \ar[r] & \mathsf{Ker} f \ar[d] \ar[r] & \mathsf{Ker} h \ar[d]
\ar[r] &  \mathsf{Ker} g\ar[d] & \\	0 \ar[r] & P_1 \ar[d]^f \ar[r] & P_1\oplus Q_1 \ar[d]^h
\ar[r] &  Q_1\ar[d]^g \ar[r] & 0\\
	0 \ar[r] & P_0 \ar[d] \ar[r] & P_0\oplus Q_0
	\ar[d] \ar[r] & Q_0 \ar[d] \ar[r] & 0\\
  & \mathsf{Cok}\ f \ar[d] \ar[r] & \mathsf{Cok}\ h
	\ar[d] \ar[r] & \mathsf{Cok}\ g   \ar[d] \ar[r] & 0\\
& 0  & 0  & 0 & }
$$
The second and third rows (from the top)  provide a projective presentation of $\mathsf{Cok}\la$ as an object in \({\rm rep}(A_3, J, \Lambda)\).
Using  the properties of  minimal projective presentations and the classification of projective objects in \({\rm rep}(A_3, J, \Lambda)\), we  deduce  the following isomorphism in  \({\rm rep}(A_3, J, \Lambda)\):

$$(\mathsf{Ker}\ f \rt \mathsf{Ker}\ h \rt \mathsf{Ker}\ g)\simeq \Omega^2(\la) \oplus (Q\st{1}\rt Q\rt 0)\oplus (0\rt Q'\st{1}\rt Q')\oplus (0\rt 0\rt Q'').$$
\noindent
Since $\mathsf{X}$ and $\mathsf{Z}$ belong to  \(\mathsf{ICE}(\CM)\), it follows that  $\mathsf{X}_c$ and $\mathsf{Z}_c$ are in $\mathsf{add} (P\rt 0).$ Hence, $Q\oplus Q'\oplus Q'' \in \mathsf{add}P$. By the third condition of an rICE-closed subcategory, the exact right sequence $\mathsf{Cok}\  \la$ is also of type $1$. Therefore, $Q_1(\mathsf{Cok}\ \la) \in \mathsf{add}P$.  From the above isomorphism, we  obtain $\mathsf{Y}_c \simeq (Q_1(\mathsf{Cok}\ \la)\oplus Q\oplus Q'\rt 0)$, as desired. This shows  that $\mathsf{Y}$ is an object in  \(\mathsf{ICE}(\CM)\).

Now we show that $\mathsf{ICE}(\CM)$ is admissible cokernel-closed and admissible image-closed.  
Assume that $f \colon \mathsf{M} \to \mathsf{N}$ is an admissible morphism in $\mathsf{ICE}(\CM)$. 
Thus, we have a factorization
\[
f \colon \mathsf{M} \xrightarrow{g} \mathsf{D} \xrightarrow{h} \mathsf{N}
\]
in $\CP$, where $g$ is a deflation and $h$ is an inflation.  
According to this factorization, we obtain the following conflations:
\[
\delta_1: \mathsf{K} \rightarrowtail \mathsf{M} \st{g}\twoheadrightarrow \mathsf{D}
\]
\[
\delta_2: \mathsf{D} \st{h}\rightarrowtail \mathsf{N} \twoheadrightarrow \mathsf{H}
\]
\noindent
Taking cokernels of the conflations $\delta_1$ and $\delta_2$, we obtain the induced factorization
\[
\begin{tikzcd}
    \mathsf{Cok}\,\mathsf{K} & \mathsf{Cok}\,\mathsf{M} && \mathsf{Cok}\,\mathsf{N} & \mathsf{Cok}\,\mathsf{H} & 0 \\
    && \mathsf{Cok}\,\mathsf{D}
    \arrow[from=1-1, to=1-2]
    \arrow["\mathsf{Cok}\,f", from=1-2, to=1-4]
    \arrow["\mathsf{Cok}\,g", two heads, from=1-2, to=2-3]
    \arrow[from=1-4, to=1-5]
    \arrow[from=1-5, to=1-6]
    \arrow["\mathsf{Cok}\,h", from=2-3, to=1-4]
\end{tikzcd}
\]

\noindent
Since $\mathsf{M}_c$ belongs to $\mathsf{add}(P \to 0)$, it follows that $\mathsf{Cok}\,\delta_1$ is a $P$-morphism of type~$1$.  
On the other hand, since $\mathsf{D}_c$ is an injective object, the conflation $\delta_2$ implies that $\mathsf{D}$ is a direct summand of $\mathsf{N}$.  
Because $\mathsf{N} \in \mathsf{ICE}(\CM)$, we obtain that $\mathsf{N}_c \in \mathsf{add}(P \to 0)$, and hence also
\[
\mathsf{D}_c \in \mathsf{add}(P \to 0).
\]
This shows that $\mathsf{Cok}\,\delta_1$ is also of type~$0$. Therefore, $\mathsf{Cok}\,f$ is a $P$-morphism.

This allows us to apply the second condition in Definition~\ref{Def. ICE-Mor-Proj}.  
By the $P$-morphism $\mathsf{Cok}\,f$, it follows that $\mathsf{Cok}\,\mathsf{D}$ lies in $\CM$.  
This implies that $\mathsf{D}_0 \oplus \mathsf{D}_a \in \mathsf{ICE}(\CM)$.  
The direct summand $\mathsf{D}_b$ automatically lies in $\mathsf{ICE}(\CM)$.  
Moreover, as discussed earlier, $\mathsf{D}_c \in \mathsf{ICE}(\CM)$.  
Hence, $\mathsf{D}$ is an object of $\mathsf{ICE}(\CM)$. Consequently, $\mathsf{ICE}(\CM)$ is closed under admissible images.

Now we show that $\mathsf{H}$ also lies in $\mathsf{ICE}(\CM)$.  
Since $\mathsf{N}_c$ belongs to $\mathsf{add}(P \to 0)$, it follows that the right exact sequence 
$\mathsf{Cok}\,\delta_2$ is of type~$1$.  
Hence, the second condition in Definition~\ref{Def. ICE-Mor-Proj} guarantees that 
$\mathsf{Cok}\,\mathsf{H}$ lies in $\CM$.  
This implies that $\mathsf{H}_0 \oplus \mathsf{H}_a \in \mathsf{ICE}(\CM)$.  

For the direct summand $\mathsf{H}_b$, there is no issue.  
Moreover, since all terms of the right exact sequence $\mathsf{Cok}\,\delta_2$ belong to $\CM$, 
we obtain that $\mathsf{Cok}\,\delta_2$ is also of type~$0$.  
It follows that $\mathsf{H}_c$ lies in $\mathsf{ICE}(\CM)$.  Hence, $\mathsf{H} \in \mathsf{ICE}(\CM)$, as desired.

Finally, let $\CM$ be an rICE-closed subcategory with projective module $P$ that admits an $r\Ext$-progenerator $M$ along with projective module $Q$ (as in Definition~\ref{Def. Pext-projective}). We show that $\mathsf{ICE}(\CM)$ is an ICE-closed subcategory which has an $\Ext$-progenerator in $\CP(\La)$. It suffices to prove the existence of an $\Ext$-progenerator.

Set
\[
\mathsf{M}=\mathsf{X}_M\oplus (Q\rt 0)\oplus (\La\st{1}\rt \La).
\]
We prove that $\mathsf{M}$ acts as an $\Ext$-progenerator for $\mathsf{ICE}(\CM)$. First, we show that $\mathsf{M}$ is $r\Ext$-projective in $\CM$. The case of the direct summand $(\La\st{1}\rt \La)$ is clear. Since ${\Hom_{\La}}(Q,M)=0$, Lemma~\ref{lem. vanishing-Hom-Ext} implies that
\[
{\Ext^1_{\CP}}((Q\rt 0),\mathsf{N})=0
\]
for every $\mathsf{N}\in \mathsf{ICE}(\CM)$. Hence the claim also holds for the direct summand $(Q\rt 0)$.

For the direct summand $\mathsf{X}_M$, let
\[
\la:\quad 0 \rt \mathsf{Y}\st{f}\rt \mathsf{Z}\st{g}\rt \mathsf{X}_M\rt 0
\]
be an extension in $\CP$. The term $\mathsf{X}_M$ clearly has no injective direct summand, and without loss of generality we may assume that $\mathsf{Y}$ has no injective direct summand. In this case, the sequence $\la$ induces a projective presentation in ${\rm rep}(A_3,J,\Lambda)$ of the right exact sequence
\[
\mathsf{Cok}\,\la:\quad 
\mathsf{Cok}\,\mathsf{Y}\st{\mathsf{Cok}f}\rt 
\mathsf{Cok}\,\mathsf{Z}\st{\mathsf{Cok}g}\rt 
\mathsf{Cok}\,\mathsf{X}_M\rt 0.
\]
Note that $\mathsf{Cok}\,\mathsf{X}_M=M$. By assumption, $\mathsf{Cok}f$ and $\mathsf{Cok}g$ are a section and a retraction, respectively. Hence we obtain an isomorphism in ${\rm rep}(A_3,J,\Lambda)$:
\[
\mathsf{Cok}\,\la 
\simeq 
(A\st{1}\rt A\rt 0)\oplus (0\rt B\st{1}\rt B)
\]
for some modules $A$ and $B$. Computing the minimal projective presentation of the right-hand side in ${\rm rep}(A_3,J,\Lambda)$, we see that it has the form
\[
(P_1\st{1}\rt P_1\rt 0)\oplus (0\rt Q_1\st{1}\rt Q_1)
\lrt 
(P_0\st{1}\rt P_0\rt 0)\oplus (0\rt Q_0\st{1}\rt Q_0)
\rt 
\mathsf{Cok}\,\la \rt 0.
\]
By the uniqueness of minimal projective presentations up to isomorphism, the minimal projective presentation induced by $\la$ is isomorphic to the above one. It follows that the sequence $\la$ splits, as desired.

Let $\mathsf{Y}$ be an object in $\mathsf{ICE}(\CM)$. 
We need to show that there exists a short exact sequence in $\CP$
\begin{equation}\label{eq. short55}
    0 \to \mathsf{K} \to \mathsf{N} \to \mathsf{Y} \to 0 
\end{equation}    
such that $\mathsf{N} \in \mathsf{add} \,\mathsf{M}$. 

The case when $\mathsf{Y} = \mathsf{Y}_b$ is clear. 
Assume that $\mathsf{Y} = \mathsf{Y}_0 \oplus \mathsf{Y}_a$. 
By assumption, in $\mmod \La$, there is a $Q$-right exact sequence
\[
K \to N \to \mathsf{Cok}\, \mathsf{Y} \to 0
\]
with $Q_1(\lambda) \in \mathsf{add}\, Q$, $K \in \CM$, and $N \in \mathsf{add}\, M$. 
In view of Construction~\ref{Construction5}, there exists a short exact sequence as in \eqref{eq. short55} such that 
$\mathsf{N}_c \in \mathsf{add}\,(Q \to 0)$, 
$\mathsf{N}_0 \oplus \mathsf{N}_a \in \mathsf{add}\, \mathsf{X}_M$, 
and $\mathsf{K} =\mathsf{X}_K$. 
Thus, $\mathsf{N} \in \mathsf{add}\, \mathsf{M}$ and $\mathsf{K} \in \mathsf{ICE}(\CM)$, 
and we obtain the desired short exact sequence in $\mathsf{ICE}(\CM)$.

If $\mathsf{Y} = \mathsf{Y}_c = (P \to 0)$, then by assumption and 
in view of Construction~\ref{Construction5}, we obtain the following short exact sequence in $\CP(\La^{\rm op})$:
\begin{equation}\label{eq. shorexact4}    
0 \to (0 \to P^*) \to (P_0^* \xrightarrow{f^*} P_1^*) \oplus (0 \to Q_1^*) 
\to ((P_0')^* \xrightarrow{g^*} (P_1')^*) \oplus (0 \to Q_2^*) \to 0,
\end{equation}
where $Q_1, Q_2 \in \mathsf{add}\, Q$, and 
$P_1 \xrightarrow{f} P_0 \to M_1 \to 0$ and 
$P_1' \xrightarrow{g} P_0' \to M_2 \to 0$ are minimal projective presentations. 

By applying the duality functor $(-)^* : \CP(\La^{\rm op}) \to \CP(\La)$ on (\ref{eq. shorexact4}), 
we obtain the desired short exact sequence in \eqref{eq. short55}. 
For the case when $\mathsf{Y}$ is an object in $\mathsf{add}\,(P \to 0)$, 
the same short exact sequence as in \eqref{eq. shorexact4} exists, 
and the same argument applies. Therefore, we conclude that $\mathsf{M}$ is an $\Ext$-progenerator in $\mathsf{ICE}(\CM)$.
\end{proof}
\begin{remark}
 According to the construction of $\mathsf{ICE}(\CM)$, it is the maximal ICE-closed subcategory of $\CP(\Lambda)$ whose image under the cokernel functor is equal to $\CM$.
\end{remark}
Let $\CC$ be an ICE-closed subcategory of $\CP(\La)$. We denote by $\mathsf{icep}(\CC)$ the basic projective module $Q$ such that the injective object $(Q \to 0)$ belongs to every ICE-closed subcategory $\CC'$  such that  $\CC \subseteq \CC' \subseteq \CP(\La)$ and    $ \mathsf{Cok}\,\CC = \mathsf{Cok}\,\CC'.$

\begin{proposition}\label{prop. ICE-Mor-relative-ICE2}
 
Let $\CC$ be an ICE-closed subcategory (resp.  has a  $\Ext$-progenerator) in $\CP(\La)$. Then, $\mathsf{Cok} \CC$ is an rICE-closed subcategory with the  projective module \(\mathsf{icep}(\mathscr{C})\) (resp.  has a    $r\Ext$-progenerator) in $\mmod \La$.
\end{proposition}

\begin{proof}
     Assume that \(\CC\) is an ICE-closed subcategory of \(\CP\). 
Without loss of generality, we may assume that \(\CC\) contains the projective object 
\((\Lambda \xrightarrow{1} \Lambda)\). 
Set \(P := \mathsf{icep}(\CC)\). 
We show that \(\mathsf{Cok}\,\CC\) is an rICE-closed subcategory with projective module \(P\).

Let 
\begin{equation}\label{eq. shorttype0}
 \lambda:\ M \rightarrow N \rightarrow L \rightarrow 0
\end{equation}
be a \(P\)-right exact sequence of type \(0\) in $\mmod \La$ 
with \(M, L \in \mathsf{Cok}\CC\). 
According to Construction~\ref{Construction5}, we obtain a conflation  
\[
\lambda':\  \mathsf{M} \rightarrowtail \mathsf{N} \twoheadrightarrow \mathsf{L}
\]
in \(\CP\) such that \(\mathsf{Cok}\,\lambda' = \lambda\), \(\mathsf{M} \in \CC\), and 
\(\mathsf{L}_0 \oplus \mathsf{L}_a \oplus \mathsf{L}_b \in \CC\). 
Since \(\lambda\) is of type \(0\), we have  $\mathsf{L}_c \in \mathsf{add}(P \to 0).$
By the definition of the projective module \(P\), there exists an ICE-closed subcategory 
\(\CC'\) of \(\CP\) containing \(\CC\) and such that \((P \to 0) \in \CC'\). 
Hence, we may regard the short exact sequence \(\lambda'\) as lying in \(\CC'\). 
Since \(\CC'\) is closed under extensions, it follows that $\mathsf{N} \in \CC'.$ Consequently, $
\mathsf{Cok}\,\mathsf{N} \in \mathsf{Cok}\,\CC' 
= \mathsf{Cok}\,\CC.$ This proves that \(\mathsf{Cok}\,\CC\) is closed under \(P\)-right exact sequences (of type 0).

Let $f \colon M \to N$ be a $P$-morphism. Then it admits a factorization as in Definition~\ref{Def. P-morphism1245}:
 \begin{equation*}
         \begin{tikzcd}
        {K } & M && N & {H} & 0 \\
        && D
        \arrow[from=1-1, to=1-2]
        \arrow["f", from=1-2, to=1-4]
        \arrow["g", two heads, from=1-2, to=2-3]
        \arrow[from=1-4, to=1-5]
        \arrow[from=1-5, to=1-6]
        \arrow["h", from=2-3, to=1-4]
    \end{tikzcd}
    \end{equation*}
such that the right exact sequences
\[
\delta_1:\ K \rightarrow M \xrightarrow{g} D \rightarrow 0
\quad \text{and} \quad
\delta_2:\ D \xrightarrow{h} N \rightarrow H \rightarrow 0
\]
are $P$-right exact, and $\delta_2$ is of type~$1$.

By Construction~\ref{Construction5}, there exist conflations 
\[
\delta'_1:\   \mathsf{K} \rightarrowtail \mathsf{M} 
\st{g'}\twoheadrightarrow \mathsf{D}' 
\]

and
\[
\delta'_2:\  \mathsf{D} \st{h'}\rightarrowtail 
\mathsf{N} \twoheadrightarrow \mathsf{H} 
\]
in $\CP$ such that  $ \mathsf{Cok}\,\delta'_1 = \delta_1,   \mathsf{Cok}\,\delta'_2 = \delta_2,$
and 
\[
\mathsf{D}' = \mathsf{D} \oplus \bigl(Q_0(\delta_1) \to 0\bigr).
\]
By adding the split exact sequence
\[
0 \longrightarrow (Q_0(\delta_1) \to 0)
\xrightarrow{\mathrm{id}}
(Q_0(\delta_1) \to 0)
\longrightarrow 0 \longrightarrow 0
\]
to $\delta'_2$, we obtain the following short exact sequence in $\CP$:
\[
\delta''_2:\ 
0 \longrightarrow 
\mathsf{D} \oplus (Q_0(\delta_1) \to 0)
\xrightarrow{h''}
\mathsf{N} \oplus (Q_0(\delta_1) \to 0)
\longrightarrow 
\mathsf{H}
\longrightarrow 0.
\]
\noindent
Combining the sequences $\delta'_1$ and $\delta''_2$, we obtain the following factorization in $\CP$:
\begin{equation*}
   \begin{tikzcd}
\mathsf{M} \arrow[r, "f'"] \arrow[d, two heads, "g'"'] 
& \mathsf{N} \oplus (Q_0(\delta_1) \to 0) \\
\mathsf{D}' \arrow[ur, tail, "h''"'] &
\end{tikzcd}
\end{equation*}
where $f' = h'' g'$. Hence $f'$ is an admissible morphism in $\CC$. Since $\CC$ is closed under admissible images and cokernels, it follows that $\mathsf{D}'$ and $\mathsf{H}$ both belong to $\CC$. Consequently,
\[
D = \mathsf{Cok}\,\mathsf{D}' 
\quad \text{and} \quad 
H = \mathsf{Cok}\,\mathsf{H}
\]
belong to $\mathsf{Cok}\,\CC$, as desired.

Now we verify condition~$(3)$ in Definition~\ref{Def. ICE-Mor-Proj}. 
Let \(\lambda\) be a \(P\)-right exact sequence of type 0 as in~(\ref{eq. shorttype0}). 
By a similar argument as above, there exists a conflation
\[
\lambda':\  \mathsf{M} \rightarrowtail \mathsf{N} \twoheadrightarrow \mathsf{L}
\]
in \(\CP\) such that \(\mathsf{Cok}\,\lambda' = \lambda\) and $\mathsf{N}_c$ belongs to \(\mathsf{add}(P \to 0)\). 
This implies that \(\lambda\) is also of type~\(1\), hence a \(P\)-right exact sequence. 

Finally, by the definition of the projective module $P$, it follows that 
$P$ is the maximal projective module required in 
Definition~\ref{Def. ICE-Mor-Proj}. 
This completes the proof that  \(\mathsf{Cok}\,\CC\) is an rICE-closed subcategory with projective module \(P\).

Now assume that $\CC$ admits an $\Ext$-progenerator $\mathsf{M}$. 
Let $\mathsf{M}_c = (Q \to 0)$. We show that $M := \mathsf{Cok}\,\mathsf{M}$
is an $r\Ext$-progenerator, along with $Q$, for $\mathsf{Cok}\,\CC$. Let  $
\lambda:~X \xrightarrow{f} Y \xrightarrow{g} Z \to 0$ be a $P$-right exact sequence in $\CM$ with $Q_0(\lambda) \in \mathsf{add}\, Q$.  
By Construction~\ref{Construction5}, there exists a short exact sequence
\[
\lambda':\quad 
0 \to \mathsf{X} \xrightarrow{f'} \mathsf{Y} \xrightarrow{g'} \mathsf{Z} \to 0
\]
in $\CP$ such that $\mathsf{Cok}\,\lambda' = \lambda$. Since $Q_0(\lambda) \in \mathsf{add}\, Q$ and $\lambda$ is $P$-right exact, we may assume that $\lambda'$ lies in $\CC$ and that 
$\mathsf{Z}_c \in \mathsf{add}\,(Q \to 0)$.  As $\mathsf{Z}$ is $\Ext$-projective in $\CM$, and therefore  $\lambda'$ splits in $\CP$.  Consequently, $f$ is a section and $g$ is a retraction, as desired. 
Let $N \in \mathsf{Cok}\,\CC$. Then $\mathsf{X}_N$ lies in $\CC$. 
Since $\mathsf{M}$ is an $\Ext$-progenerator for $\CC$, there exists a short exact sequence
\[
\delta:\quad 0 \to \mathsf{K} \to \mathsf{S} \to \mathsf{X}_N \to 0
\]
with $\mathsf{K}, \mathsf{S} \in \mathsf{add}\,\mathsf{M}$. Because $\mathsf{S}_c \in \mathsf{add}(Q \to 0)$, it follows that 
$\mathsf{Cok}\,\delta$ satisfies the required condition in 
Definition~\ref{Def. Pext-projective}. Finally, since $(P \to 0)$ is an object of $\CC$ and $\CC$ has enough $\Ext$-projective objects, there exists a short exact sequence in $\CC$ ending at $(P \to 0)$. Applying the duality functor $(-)^* : \CP(\Lambda) \to \CP(\Lambda^*),$ 
we obtain the required exact sequence~\eqref{eq. def-pexts} 
from Definition~\ref{Def. Pext-projective}.
\end{proof}

\subsection{The bijection: $\tau$-rigid modules and rICE-closed subcategories}

\begin{lemma}\label{rigid-tau-rigid}
The following statements hold.
\begin{itemize}
   \item [$(i)$] Let $\mathsf{M}$ be  a rigid object in $\CP(\La)$. Then $\mathsf{ Cok}\mathsf{M}$ is a $\tau$-rigid module;
   \item [$(ii)$] Let  $M$ be  a module in $\mmod \La$ with a minimal projective presentation $P\st{f}\rt Q\rt M \rt 0$. If $M$ is a $\tau$-rigid module, then  $\mathsf{X}_M=(P\st{f}\rt Q)$ is rigid in $\CP(\La).$
    \end{itemize}
\end{lemma}
  \begin{proof}
  $(i)$  If $\mathsf{M}_0=0$,  there is nothing to prove. So, we assume that $\mathsf{M}_0\neq 0$. In view of Proposition \ref{Cok-ass}, we have
    \begin{align*}
   \tau\mathsf{Cok \mathsf{M}}&= \tau\mathsf{Cok}(\mathsf{M}_0\oplus \mathsf{M}_a\oplus \mathsf{M}_b\oplus \mathsf{M}_c)&\\
   &=\tau\mathsf{Cok}\mathsf{M}_0\oplus \tau \mathsf{Cok}\mathsf{M}_a\oplus \tau \mathsf{Cok}\mathsf{M}_b\oplus \tau\mathsf{Cok}\mathsf{M}_c\\
   &= \mathsf{Cok}\tau_{\CP}\mathsf{M}_0.
    \end{align*}
   Note that  $\mathsf{Cok}\mathsf{M}_a$ is projective, so $\tau \mathsf{Cok}\mathsf{M}_a=0$.  Let $f: \mathsf{Cok}\mathsf{M} \rt \tau \mathsf{Cok}\mathsf{M}$ be a morphism in $\mmod \La$, which is indeed a morphism from $\mathsf{Cok}\mathsf{M}$ to $\mathsf{Cok} \tau_{\CP}\mathsf{M}_0$ by the above observation.  We can lift $f$ to the projective presentations of $\mathsf{Cok}\mathsf{M}$ and $\mathsf{Cok}\tau_{\CP}\mathsf{M}_0$ induced by the objects $\mathsf{M}$ and $\tau_{\CP}\mathsf{M}_0$. Denote by $g:\mathsf{M}\rt \tau_{\CP}\mathsf{M}_0$ the lifted map. Hence $\mathsf{Cok}g=f$. Since $\mathsf{M}$ is rigid, by the  ARS-duality in $\CP$ (see Subsection \ref{Sub. remainder}), we have  $\overline{{\rm Hom}}_{\CP}(\mathsf{M}, \tau_{\CP} \mathsf{M}_0)=0.$ This implies  that $g$ factors through an injective object in $\CP$. Since the cokernel of injective objects is zero, we infer that $\mathsf{Cok}g=0$. Consequently, $f=0$, as desired.
   
   $(ii)$ Let $M$ be a $\tau$-rigid module. According to ARS-duality in $\CP$, we have:
$$\text{Ext}^1_{\mathscr{P}}(\mathsf{X}_M, \mathsf{X}_M) \simeq D\underline{\text{Hom}}_{\mathscr{P}}(\tau^{-1}_{\mathscr{P}}\mathsf{X}_M, \mathsf{X}_M) \simeq D\overline{\text{Hom}}_{\mathscr{P}}(\mathsf{X}_M, \tau_{\mathscr{P}}\mathsf{X}_M).$$
By Theorem \ref{Thm-Cok-equiv}, $\overline{\text{Hom}}_{\mathscr{P}}(\mathsf{X}_M, \tau_{\mathscr{P}}\mathsf{X}_M) \simeq \text{Hom}_{\Lambda}(\mathsf{Cok}\mathsf{X}_M, \mathsf{Cok}\tau_{\mathscr{P}}\mathsf{X}_M)$. Using Proposition \ref{Cok-ass}, we know that $\mathsf{Cok}\tau_{\mathscr{P}}\mathsf{X}_M\simeq \tau M$. Since $M$ is $\tau$-rigid, we can conclude that $\text{Ext}^1_{\mathscr{P}}(\mathsf{X}_M, \mathsf{X}_M) \simeq D \text{Hom}_{\Lambda}(M, \tau M) = 0$, as desired.
  \end{proof}

In the next result, the first $\mathsf{Cok}\ \overline{M}$  means that the subcategory of $\CP(\La)$ consisting of all cokernels of morphisms in $\mathsf{add} \ \overline{M}$. But, the second $\mathsf{Cok}$ means that applying the cokernel functor on the subcategory $\mathsf{Cok} \ \overline{M}$.

\begin{theorem}\label{mainTheorem5}
Let $\La$ be an Artin algebra.
Then there exist mutually inverse bijections between the following two sets:
\begin{itemize}
    \item[(1)] Isomorphism classes of $\tau$-rigid modules;
    \item[(2)]  rICE-closed subcategories of $\mmod \La$ that admit an
    $r\Ext$-progenerator.
\end{itemize} 
The map from $(1)$ to $(2)$ is given by $M \mapsto \mathsf{Cok}\bigl(\mathsf{Cok}\,\overline{M}\bigr),$ where $ \overline{M}=\mathsf{X}_M\oplus(Q\rt 0)\oplus(\La\st{1}\rt \La)$ and $Q$ is the maximal projective module such that ${\Hom_{\La}}(Q, M)=0$.  Conversely, the map from $(2)$ to $(1)$ is given by $ \CM \mapsto  \mathsf{Cok}\bigl(\mathsf{P}(\mathsf{ICE}(\CM))\bigr)$.
\end{theorem}
\begin{proof}
First, we note that by Lemmas~\ref{lem. vanishing-Hom-Ext} 
and~\ref{rigid-tau-rigid}, the object $\overline{M}$ is rigid in $\CP$. 
Using Propositions~\ref{prop. ICE-Mor-relative-ICE} 
and~\ref{prop. ICE-Mor-relative-ICE2} together with 
Theorem~\ref{Thm. ICE-rigid-Mor}, we see that the maps defined in the statement are well defined. 
It is straightforward to verify that these maps are mutually inverse.
\end{proof}

In the rest, we try to provide some more information for the subcategory $\mathsf{Cok}\bigl(\mathsf{Cok}\,\overline{M}\bigr)$.

\begin{proposition}\label{Propp. inc}
Let $M$ be a $\tau$-rigid module, and  $ \overline{M}=\mathsf{X}_M\oplus(Q\rt 0)\oplus(\La\st{1}\rt \La)$ as defined in Theorem \ref{mainTheorem5}.  Then  $\mathsf{Cok}\bigl(\mathsf{Cok}\,\overline{M}\bigr) \subseteq \mathsf{Cok}\ M.$ 
\end{proposition}
\begin{proof}
 For every object $\mathsf{X}$ in  $\mathsf{Cok} \ \overline{M}$  there is a conflation $\mathsf{M}_1\rightarrowtail \mathsf{M}_0\twoheadrightarrow \mathsf{X},$
        where $\mathsf{M}_0, \mathsf{M}_1 \in \mathsf{add}\overline{M}.$ After applying the  cokernel functor, we obtain the following right exact sequence 
        $$\mathsf{Cok} \ \mathsf{M}_1\rt \mathsf{Cok} \ \mathsf{M}_0\rt \mathsf{Cok}\ \mathsf{X}\rt 0.$$
  Since $\mathsf{Cok}\ \overline{M}=M$, we obtain   $\mathsf{Cok}\ \mathsf{M}_1, \mathsf{Cok}\ \mathsf{M}_0 \in \mathsf{add} M $. Hence,  $\mathsf{Cok}\ \mathsf{X} \in \mathsf{Cok}~M$, and consequently $\mathsf{Cok}\bigl(\mathsf{Cok}\,\overline{M}\bigr) \subseteq \mathsf{Cok}\ M.$ 
\end{proof}

We consider in which cases the equality in the above proposition holds.

\begin{proposition}\label{prop. her-eaulity-rigid}
Assume $\La$ is a hereditary algebra. Let  $ \overline{M}=\mathsf{X}_M\oplus(Q\rt 0)\oplus(\La\st{1}\rt \La)$ be  as defined in Theorem \ref{mainTheorem5}.  Then  $\mathsf{Cok}\bigl(\mathsf{Cok}\,\overline{M}\bigr)= \mathsf{Cok}\ M.$ 
\end{proposition}
\begin{proof}
 By Proposition \ref{Propp. inc}, it suffices show   that $ \mathsf{Cok}\ M \subseteq \mathsf{Cok}\bigl(\mathsf{Cok}\,\overline{M}\bigr) $.  
Let $N \in \mathsf{Cok} \  M$. Then, by \cite[Proposition 3.5]{En},  there is a  short  exact sequence in $\mmod \La$
  \[
  \la: 0 \rt \ M_1\rt M_0\rt N\rt 0
  \]
  with $M_1, M_0 \in \mathsf{add} \ M.$ By Proposition \ref{Prop. hereditary.P.right}, the sequence $\la$ is 0-right exat sequence, i.e., $Q_0(\la)=Q_1(\la)=0$. Hence, in view of Construction \ref{Construction5}, there exists  a conflation 
  \[
  \la': \mathsf{K}\rightarrowtail \mathsf{L}\twoheadrightarrow \mathsf{N}
  \]
  with $\mathsf{Cok}\ \la'=\la$. By  the construction, we have $\mathsf{K}=\mathsf{X}_{K} \in \mathsf{add}\ \overline{M}$. Since $\la$ is a $0$-right exact sequence, it follows that   $\mathsf{L}_c=\mathsf{N}_c=0$, and  $$\mathsf{L}_0\oplus\mathsf{L}_a \oplus \mathsf{L}_b \in \mathsf{add} \ \overline{M}.$$ 
  Therefore, $\mathsf{N}$ is the cokernel of a morphism  in $\mathsf{add} \ \overline{M}$. As $\mathsf{Cok} \ \overline{M}$ is closed under cokernls, we infer  that $\mathsf{N} \in \mathsf{Cok} \ \overline{M}$. Consequently, $N=\mathsf{Cok} \ \mathsf{N} \in \mathsf{Cok}\bigl(\mathsf{Cok}\,\overline{M}\bigr)$, as desired.
\end{proof}

By \cite[Theorem 3.2]{En}, every ICE-closed subcategory of $\mmod \La$ with enough $\Ext$-projectives is of the form $\mathsf{Cok}\, M$ for some rigid module $M$. We note that over hereditary algebras, rigid modules coincide with $\tau$-rigid modules. Hence, by Proposition~\ref{prop. her-eaulity-rigid}, it follows that ICE-closed subcategories of $\mmod \La$ with enough $\Ext$-projectives coincide with rICE-closed subcategories of $\mmod \La$ that admit an $r\Ext$-progenerator. 

As a consequence, Theorem~\ref{mainTheorem5} can be regarded as a generalization of the bijection in \cite[Theorem 3.2]{En} to arbitrary Artin algebras, not necessarily hereditary.

In the last section (see Proposition \ref{prop. supp-eaulity-rigid}), we will observe for when $M$ is a support $\tau$-tilting module, the equality in Proposition \ref{Propp. inc} holds. Moreover, we show in Proposition \ref{Prop. cok=fac} that $\mathsf{Cok} \ M=\mathsf{Fac} \ M.$  Therefore, the above theorem can be regarded as an extension of  \cite[Theorem~2.7]{AIR}.

\begin{remark}
  Here, the question arises whether this equality holds in general. Let us explain where the argument breaks down. Let $N \in \mathsf{Cok}\, M$. Then, by \cite[Proposition 3.5]{En}, there exists a short exact sequence in $\mmod \La$
\[
\lambda:\ \ M_1 \rt M_0 \rt N \rt 0
\]
with $M_1, M_0 \in \mathsf{add}\, M$.  By Construction~\ref{Construction5}, there exists a conflation
\[
\lambda':\ \mathsf{K} \rightarrowtail \mathsf{L} \twoheadrightarrow \mathsf{N}
\]
such that $\mathsf{Cok}\, \lambda' = \lambda$. By the construction, we have $\mathsf{K} = \mathsf{X}_{M_1}$, and $
\mathsf{L}_0 \oplus \mathsf{L}_a \oplus \mathsf{L}_b\in \mathsf{add}\, \overline{M}.$
The main difficulty here is that we have no control over the direct summands $\mathsf{L}_c$ and $\mathsf{N}_c$.  In conclusion, if for all right exact sequences $\lambda$ as above the injective objects $(Q_1(\lambda)\rt 0)$ and $(Q_0(\lambda)\rt 0)$ belong to $\mathsf{Cok}\, \overline{M}$, then the desired equality holds.

The bijection established in this section is a reflection of 
Theorem~\ref{Thm. ICE-rigid-Mor}. In Section~\ref{Section 3}, we obtained 
two additional bijections in $\CP(\Lambda)$, namely those in 
Theorems~\ref{Thm. IKE-rigid-Mor} and \ref{thm:main-twin-IEclosed123}. 
It is natural to ask what kinds of bijections in $\mmod \Lambda$ are 
reflected by these results. We plan to investigate these reflections 
in forthcoming work.
\end{remark}

\section{Applications}
  In this section, we prove a bijection between tilting objects in $\CP(\Lambda)$ and support $\tau$-tilting modules in $\mmod \Lambda$. We define r-cotorsion--torsion triples and r-cotorsion--r-torsion triples in $\mmod \Lambda$. Then, as an application of our results from Section~4, we prove our final bijection in this paper between these triples and support $\tau$-tilting modules. We conclude this section with further applications, particularly Theorem~\ref{final}, which highlights the significance of right exact sequences within the framework of support $\tau$-tilting theory.

\subsection{Bijection: tilting objects in $\CP(\La)$ and support $\tau$-tilting modules}
\begin{definition}\label{Def. tilting. CP}
Following  \cite[Definition 4.1 and Theorem 5.3]{S} for the case $n=1$, 
 an additive  subcategory $\mathscr{T}$ of an exact category  $\mathscr{E}$ with enough projectives  is called  {\em tilting} (1-tilting) if it satisfies:
	\begin{enumerate}
		\item $\mathscr{T}$ is closed under summands and self-orthogonal (i.e. $\mathscr{T} \subseteq \mathscr{T}^{\bot}$).
		\item ${\rm pd}_{\mathscr{E}}\mathscr{T}\leq 1$.
		\item For any projective object $P \in \mathscr{E}$, there exists a conflation $P\rightarrowtail C_0\twoheadrightarrow C_1,$ with $C_0, C_1 \in \mathscr{T}.$
	\end{enumerate}
    \end{definition}
If  $\mathscr{T}$ satisfies $(1)$ and $(2)$, then we call it a {\em partial tilting} subcategory. We call an object $T$ in $\mathscr{E}$ a (resp. partial) tilting object if $\mathsf{add}T$ is a  (resp. partial) tilting subcategory.

 We specialize the above definition for the exact category $\CP(\La)$. By noting that $\CP$ has global dimension one, we can infer that an object $\mathsf{M}$ in $\CP$ is a partial tilting object if and only if it is rigid, i.e.,   $\Ext^1_{\CP}(\mathsf{M}, \mathsf{M})=0$. Moreover,  a partial tilting object $\mathsf{M}$ in $\CP(\La)$ is tilting if for any projective object $\mathsf{P}$ in $\CP(\La)$, there exists a conflation
    $$ \mathsf{P}\rightarrowtail \mathsf{M}_0\twoheadrightarrow \mathsf{M}_1$$ such that $\mathsf{M}_0, \mathsf{M}_1\in\mathsf{add}\mathsf{M}$.

We denote by ${\rm tilt}\CP(\La)$  the set of isomorphism classes of basic tilting  objects for $\CP(\La)$.

\begin{lemma}\label{lem. tilt.size}
    Let $\mathsf{T}=(P\st{f}\rt Q)$ be a tilting  object in $\CP(\La)$. Then, $|\mathsf{T}_b|=|\La|$. In particular, $(\La\st{1}\rt \La)$ lies in $\mathsf{add} \ \mathsf{ T}$.
\end{lemma}
\begin{proof}
    Assume $\mathsf{T}=(P\st{f}\rt Q)$ is a tilting object.  Hence,   there exists the following conflation $ \mathsf{X}\rightarrowtail  \mathsf{P}\twoheadrightarrow \mathsf{Q} $, where   $\mathsf{X}=(\La\st{1}\rt \La)$ and $\mathsf{P}, \mathsf{Q} \in \mathsf{add}\mathsf{T}$. Since $\mathsf{X}$ is an injective object in the exact category $\CP$, it is a direct summand of $\mathsf{P}$.  This follows that $\mathsf{X} \in {\rm add}\mathsf{T}$, and therefore   $|\mathsf{T}_b|=|\La|$.
\end{proof}

The proof of the following lemma is inspired by the proof given in \cite[Proposition 4.6]{PZZ}.
\begin{lemma}\label{Grpthen}
   Let $\mathsf{P}=(P\st{d}\rt Q)$ be a tilting object in $\CP(\La)$. Then $|\mathsf{P}|=2| \La |$.
\end{lemma}
\begin{proof}
We begin by noting that there is a triangle equivalence  $\mathbb{D}^{\rm b}(\CP)\simeq \mathbb{K}^{\rm b}({\rm proj}\mbox{-}\CP)$, where  $\mathbb{D}^{\rm b}(\CP)$ is the bounded derived category of the exact category $\CP$, and  ${\rm K}^{\rm b}({\rm proj}\mbox{-}\CP)$ is the homotopy category of bounded complexes of projective objects in $\CP$. This equivalence follows from the fact that $\CP$ is an exact category with finite global dimension. Moreover, by  \cite[Lemma 4.7]{S}, we have a  triangle equivalence
$$ \mathbb{K}^{\rm b}(\mathsf{add}\mathsf{P})\simeq \mathbb{D}^{\rm b} (\CP),$$
\noindent
where $ \mathbb{K}^{\rm b}(\mathsf{add}\mathsf{P})$ is the subcategory of the homotopy category $\mathbb{K}(\CP)$ consisting of all bounded complexes with terms in $\mathsf{add}\mathsf{P}$. Then, by \cite[Lemma 4.1.17]{K22}, we have
$$ {\rm K}_0(\mathsf{add}\mathsf{P})\simeq {\rm K}_0(\mathbb{K}^{\rm b}(\mathsf{add}\mathsf{P}))\simeq {\rm K}_0(\mathbb{D}^{\rm b} (\CP))\simeq {\rm K}_0(\mathbb{K}^{\rm b}({\rm proj}\mbox{-}\CP))\simeq {\rm K}_0(\CP),$$
where the split Grothendieck groups ${\rm K}_0(\mathsf{add}\mathsf{P})$ and ${\rm K}_0(\CP)$ are  free abelian groups with
ranks $|\mathsf{P}|$ and $|\CP|=2 | \La|$, respectively. Thus, the lemma is proved.
\end{proof}

\begin{proposition}\label{Prp. tily-tau-tilt}
 Let $\mathsf{T}$ be an object in $\CP(\La)$, and $M$ a module in $\mmod \La$ with minimal projective presentation $P_1\st{f}\rt P_0\rt M \rt 0$.    The following statements hold.
     \begin{itemize}
         \item [$(i)$]  If $\mathsf{T}$ is a tilting object in $\CP(\La)$, then,  $(\mathsf{Cok}\mathsf{T}, P)$, where $\mathsf{T}_c=(P\rt 0)$, is a support $\tau$-tilting pair in $\mmod \La;$
         \item [$(ii)$] If $(M, Q)$ is a support $\tau$-tilting pair in $\mmod \La, $  then  $(P_1\st{f}\rt P_0)\oplus (Q\rt 0)\oplus (\La\st{1}\rt \La)$ is a tilting object in $\CP.$
     \end{itemize}
\end{proposition}
\begin{proof}
    $(i)$
Assume $\mathsf{T}=(P_1\st{d}\rt P_0)$ is a tilting object in $\CP$, we can observe from Lemma \ref{rigid-tau-rigid} that $\mathsf{Cok}\mathsf{T}$ is a $\tau$-rigid module, where $P$ as stated in the statement. Since $\mathsf{T}$ has no self-extension, we get ${\Ext_{\CP}}((P\rt 0), \mathsf{T})=0$. Then, by Lemma \ref{lem. istau-tilting objects}, we obtain ${\Hom_{\La}}(P, \mathsf{Cok}\mathsf{T})=0$.    Furthermore, Lemma \ref{lem. tilt.size} says that $|\mathsf{T}_b|=|\La|$ and  by Lemma \ref{Grpthen},
$$2|\La|=|\mathsf{T}|=|\mathsf{T}_0|+|\mathsf{T}_a|+|\mathsf{T}_b|+|\mathsf{T}_c|$$
This  implies that $|\mathsf{Cok}\mathsf{T}|+|P|=|\mathsf{T}_0|+|\mathsf{T}_a|+|\mathsf{T}_c|=|\La|$. Therefore, we can conclude that $(\mathsf{Cok}\mathsf{T}, P)$ forms a $\tau$-tilting pair.

$(ii)$  
Let $\mathsf{M}=(P_1\stackrel{f}{\rightarrow} P_0)\oplus (Q\rightarrow 0)\oplus (\Lambda\stackrel{1}{\rightarrow} \Lambda)$. Since $|\mathsf{M}|=2|\Lambda|$, it is sufficient to prove that $\mathsf{M}$ is rigid. According to Subsection \ref{Sub. remainder}, we have,

$$\Ext^1_{\mathscr{P}}(\mathsf{M}, \mathsf{M})\simeq D\underline{{\rm Hom}}_{\mathscr{P}}(\tau^{-1}_{\mathscr{P}}\mathsf{ M}, \mathsf{M})\simeq D\overline{{\rm Hom}}_{\mathscr{P}}(\mathsf{M}, \tau_{\mathscr{P}}\mathsf{M}).$$  Hence, we need to show that $\overline{{\rm Hom}}_{\mathscr{P}}(\mathsf{M}, \tau_{\mathscr{P}}\mathsf{M})=0$. By investigating each type of direct summand of $\mathsf{M}$, we can establish the following:
\begin{itemize}
\item [$(1)$] Let $\mathsf{M}_1:=(P_1\st{f}\rt P_0 )$. It is evident that $\mathsf{Cok}\mathsf{M}_1=M$,
$$\overline{{\rm Hom}}_{\mathscr{P}}(\mathsf{M}, \tau_{\mathscr{P}} \mathsf{M}_1)\simeq \Hom_{\Lambda}(\mathsf{Cok}\mathsf{M}, \mathsf{Cok}\tau_{\mathscr{P}}\mathsf{M}_1)\simeq {\Hom_{\Lambda}}(M, \tau_{\Lambda}M)=0,$$
where the second isomorphism follows from Theorem \ref{Cok-ass}.
     \item [$(2)$] Let $\mathsf{Q}=(Q\rt 0)$. By use of Proposition \ref{Cok-ass},  $\tau_{\mathscr{P}} \mathsf{Q}=(P_1\st{d}\rt P_0)$, with $\mathsf{Cok} d=\nu Q.$ Similar to  the preceding case we have, 
\begin{align*}
     {\Ext^1_{\CP}}( \mathsf{Q}, \mathsf{M}) & \simeq D\overline{{\rm Hom}}_{\mathscr{P}}(\mathsf{M}, \tau_{\mathscr{P}} \mathsf{Q}) \\ 
     & \simeq D\Hom_{\Lambda}(\mathsf{Cok}\mathsf{M}, \mathsf{Cok}\tau_{\mathscr{P}}\mathsf{Q})\\ & = D{\Hom_{\Lambda}}({ M}, \nu Q) \\ & =0,
\end{align*}
          where the last vanishing follows from Lemma \ref{lem. istau-tilting objects}.
\item [$(3)$] Given that $(\Lambda \stackrel{1}{\rightarrow} \Lambda)$ is an injective-projective object in $\CP$, the case is straightforward.
\end{itemize}
\end{proof}

\begin{theorem}\label{Thm. the first-bijection}
Let $\La$ be an Artin algebra.  Then there exist bijections
\[     \begin{tikzcd}[column sep = large]
   {\rm tilt}\CP(\La) \rar["{\mathsf{Cok}}", shift left] & s\tau\mbox{-}{\rm tilt}\La.\lar["{\mathsf{Min}}", shift left]
  \end{tikzcd}     \]

These bijections are  defined as follows: 
 \begin{itemize} 
 \item  $\mathsf{Min}(M, P):=(P_1\st{f}\rt P_0)\oplus (P\rt 0)\oplus ( \La \st{1}\rt \La)$, where $(M, P) \in s\tau\mbox{-}{\rm tilt}\La$ and $P_1\st{f}\rt P_0\rt M\rt 0 $  is a minimal projective presentation of $M$;
 \item $\mathsf{Cok} \mathsf{M}:=(\mathsf{Cok}\mathsf{M}, P')$, where $\mathsf{M}=(P\st{f}\rt Q) \in {\rm tilt}\CP(\La)$ and  $\mathsf{M}_c= (P'\rt 0)$.
 \end{itemize}
 \end{theorem}
 
The cotilting objects in an exact category can be defined dually. In particular, a rigid object $\mathsf{M}$ in the exact category $\CP(\La)$  is said to be cotilting if for every injective object $\mathsf{I}$ in the exact category $\CP(\La)$, there exists a conflation 
$$ \mathsf{M}_1\rightarrowtail \mathsf{M}_0\twoheadrightarrow \mathsf{I},$$
where $\mathsf{M}_1,  \mathsf{M}_0\in\mathsf{add}\mathsf{M}$. In fact, for every object $(P\st{f}\rt Q)$ in $\CP(\La)$ there exists the following conflation 
 $$\xymatrix@1{  (P\st{f}\rt Q)\overset{( \left[\begin{smallmatrix} 1\\ f\end{smallmatrix}\right], ~1)}
			\rightarrowtail
			 (P\rt 0)\oplus (Q\st{1}\rt Q)\overset{([-f~~1]~ 0)}\twoheadrightarrow (Q\rt 0). }$$    
 Therefore, in the definition of cotilting objects in $\CP(\La)$, the injective dimension assumption is automatically satisfied.
Later on,  we will need an analog of Bongartz's lemma in our setting as follows:

\begin{lemma}\label{BongartzLemma}
  Let $\mathsf{T}$ be a partial tilting object in $\CP(\La)$. Then, there exists an object $\mathsf{E}$ in $\CP(\La)$ such that $\mathsf{T}\oplus \mathsf{E}$ is a tilting object in $\CP.$ In particular, $|\mathsf{T}|\leq 2|\La|.$
\end{lemma}
\begin{proof}
Let $\mathsf{Q}=(0 \rt \La) \oplus (\La\st{1}\rt \La).$ $\mathsf{Q}$ is a projective generator for the exact category $\CP.$  Since $\Ext^1_{\CP}(\mathsf{T}, \mathsf{Q})$ is finitely generated as $R$-module, there exist a positive integer $n$ and for each $1\leq i\leq n$, $\la_i: \mathsf{P}\rightarrowtail \mathsf{E}_i\twoheadrightarrow \mathsf{T}$ which generate the $R$-module $\Ext^1_{\CP}(\mathsf{T}, \mathsf{Q})$. We then have the following commutative diagram:

$$\xymatrix{		
		\oplus^n_{i=1} \la_i :& \oplus^{n}_{i=1} \mathsf{Q} \ar[d]^q\ar@{>->}[r] & \oplus^n_{i=1}\mathsf{E}_i
			\ar[d] \ar@{->>}[r] & \oplus^n_{i=1}\mathsf{T} \ar@{=}[d] &\\ \alpha:&	\mathsf{Q}\ar@{>->}[r] & \mathsf{X}
			\ar@{->>}[r] & \oplus^n_{i=1} \mathsf{T}}	$$
where $q$ is the codiagonal morphism.
   A similar argument to the one used to prove  Bongartz's lemma for the module category also works here to show that $\mathsf{T}\oplus \mathsf{X}$ is rigid (see e.g. \cite[\S VI, Lemma 2.4]{ASS}). Then,  the fact that the global dimension of $\CP$ is one shows that $\mathsf{T}\oplus \mathsf{X}$ is the desired tilting object. The last assertion is an immediate consequence of Lemma \ref{Grpthen}. 
\end{proof}

\begin{lemma}\label{Lem. cot=tilt}
An object in $\CP(\La)$ is tilting if and only if so is cotilting. 
\end{lemma}
\begin{proof}
   Assume $\mathsf{T}$ is a cotilting object in $\CP(\La)$. By the dual of Lemma \ref{Grpthen}, we have $\mathsf{T}=2|\La|$. Applying Lemma \ref{BongartzLemma}, we deduce that $\mathsf{T}$ is a tilting object in $\CP(\La)$.  Conversely, suppose $\mathsf{T}$ is a tilting object in $\CP(\La)$. Using the duality $(-)^*$ (see the discussion immediately following Definition~\ref{IKE}), we obtain $(\mathsf{T})^*$ as a cotilting object in $\CP(\La^{\rm op})$, which is also tilting, as shown in the first part of the proof. Therefore, $\mathsf{T}\simeq (\mathsf{T}^*)^*$ is a cotilting object in $\CP(\La)$.
 \end{proof}

Now we are   ready to prove  the result that we promised to show in Section \ref{Section 6}.
\begin{proposition}\label{prop. supp-eaulity-rigid}
Assume $M$ is a support $\tau$-tilting module. Let  $ \overline{M}=\mathsf{X}_M\oplus(Q\rt 0)\oplus(\La\st{1}\rt \La)$ be  as defined in Theorem \ref{mainTheorem5}.  Then  $\mathsf{Cok}\bigl(\mathsf{Cok}\,\overline{M}\bigr)= \mathsf{Cok}\ M.$ 
\end{proposition}
\begin{proof}
By Proposition~\ref{Propp. inc}, it suffices to show that $
\mathsf{Cok}\, M \subseteq 
\mathsf{Cok}\bigl(\mathsf{Cok}\,\overline{M}\bigr).$ Let $N \in \mathsf{Cok}\, M$. Then there exists a right exact sequence in 
$\mmod \Lambda$
\[
\lambda:\quad M_1 \to M_0 \to N \to 0
\]
with $M_1, M_0 \in \mathsf{add}\, M$. By Construction~\ref{Construction5}, there exists a conflation
\[
\lambda':\quad 
\mathsf{K} \rightarrowtail 
\mathsf{L} \twoheadrightarrow 
\mathsf{N}
\]
such that $\mathsf{Cok}\,\lambda' = \lambda$.
By the construction, we have $\mathsf{K} = \mathsf{X}_{M_1}$, and $
\mathsf{L}_0 \oplus \mathsf{L}_a \oplus \mathsf{L}_b 
\in \mathsf{add}\,\overline{M}.$ Theorem~\ref{Thm. the first-bijection} implies that  $\overline{M}$ is a tilting 
object in $\CP$. By Lemma~\ref{Lem. cot=tilt}, it is also cotilting. 
In particular, $(\Lambda \to 0)$ belongs to 
$\mathsf{Cok}\,\overline{M}$. It follows that $\mathsf{L}_c \in \mathsf{Cok}\,\overline{M}.$
Hence $\mathsf{N} \in \mathsf{Cok}\,\overline{M}$, and therefore
\[
N = \mathsf{Cok}\,\mathsf{N} 
\in \mathsf{Cok}\bigl(\mathsf{Cok}\,\overline{M}\bigr),
\]
as desired.
\end{proof}
The following result shows that the bijection in 
Theorem~\ref{mainTheorem5} extends the bijection proved by 
Adachi, Iyama, and Reiten in~\cite{AIR}. 

We remark that this result may can be proved through  the 
module category. However, we choose to prove it using the morphism 
category, as this approach is of independent interest.
\begin{proposition}  \label{Prop. cok=fac}

Let  $M$ be a support $\tau$-tilting module. Then $\mathsf{Cok}\ M=\mathsf{Fac} \ M.$
 \end{proposition}
\begin{proof}
Assume $(M, P)$ is a support $\tau$-tilting pair. It is clear that $\mathsf{Cok}\ M\subseteq \mathsf{Fac}\ M$. So, it is enough to show the other side.  According to Proposition \ref{prop. supp-eaulity-rigid}, $\mathsf{Cok} \ M$ is obtained by taking cokerenel on the ICE-closed subcategory $\CC=\mathsf{Cok} \ \overline{M}$ in $\CP$,  where $\overline{M}$ as defined there.   Since $\overline{M}$ is a tilting object in $\CP$, by Theorem \ref{Thm. the first-bijection}, we have the equality $\mathsf{Cone}(\overline{M},\overline{M})=(\overline{M})^{\perp}$, by Lemma \ref{Lem. tilt.cotilt.prep}. By keeping in mind the chain $\mathsf{Cone} \ \mathsf{N} \subseteq \mathsf{Cok} \ \mathsf{N} \subseteq  \mathsf{Fac} \ \mathsf{N}\subseteq \mathsf{N}^{\perp}$ which holds for every object $\mathsf{N} \in \CP$, we obtain $\mathsf{Cok} \ \overline{M}=\mathsf{Cone} \ (\overline{M}, \overline{M})=\mathsf{Fac} \ \overline{M}.$ Now, we turn to prove the reverse inclusion.  Take $N \in \mathsf{Fac} \ M$. It is not difficult to see that $\mathsf{X}_M$ lies in $\mathsf{Fac}\ \overline{M}$. Hence, $\mathsf{X}_N \in \mathsf{Cok} \ \overline{M}=\mathsf{Cone} \ (\overline{M}, \overline{M})$, consequently, there exists a conflation
$$ \mathsf{M}_1\rightarrowtail \mathsf{M}_0\twoheadrightarrow \mathsf{X}_N, $$
where $\mathsf{M}_1,  \mathsf{M}_0 \in \mathsf{add} \overline{M}$. By taking a cokernel of the above conflation, we get $N\in \mathsf{Cok}M.$
\end{proof}

\subsection{ r-cotorsion pair}
We first recall  the notion of left weak cotorsion pair from \cite{BZ}.
\begin{definition}\label{def. lw-cotrsion-pair}
A pair $(\CC,\CT)$ of subcategories of $\mathrm{mod}\,A$ 
is called a \emph{left weak cotorsion pair} (or \emph{lw-cotorsion pair} 
for short) if the following conditions hold:

\begin{enumerate}
    \item $\Ext^1_{\La}(\CC,\CT) = 0$;
    
    \item For any $M \in \mmod \La$, there exist exact sequences
    \[
    0 \rightarrow Y_M \rightarrow X_M 
    \xrightarrow{\,f_M\,} M \rightarrow 0
    \]
    and
    \[
    M \xrightarrow{\,g_M\,} Y^M \rightarrow X^M \rightarrow 0,
    \]
    where $X_M, X^M \in \CC$, 
    $Y_M, Y^M \in \CT$, 
    $f_M$ is a right $\CC$-approximation of $M$, 
    and $g_M$ is a left $\CT$-approximation of $M$.
\end{enumerate}
\end{definition}
In the sequel, we investigate the cotorsion--torsion triples described in Proposition~\ref{prop. bij-twin-tilting-cotortor}, and determine which triple subcategories of $\mmod \Lambda$ they reflect.

\begin{definition}\label{def. r-cotorsion-torsion}
A pair $(\CC,\CT)$ of subcategories of $\mmod \La$ 
is called a \emph{r-cotorsion pair}  if the following conditions hold:

\begin{enumerate}
    \item ${\Hom_{\La}}(\CC,\tau \CT) = 0$, that is, for every $A \in \CC$ and $B \in \CT$, ${\Hom_{\La}}(A, \tau B)=0$;
    
    \item Let $P$ be the maximal basic projective module such that ${\Hom_{\La}}(P, \CT)=0$. For any $M \in \mmod \La$, there exist $P$-right exact sequences
    \[
      Y_M \rightarrow X_M 
    \rt M \rightarrow 0
    \]
    and
    \[
    M \rt Y^M \rightarrow X^M \rightarrow 0,
    \]
    where $X_M, X^M \in \CC$, 
    $Y_M, Y^M \in \CT$.
    \item Let $Q$ be the maximal basic projective module in $\CC$, that is, the direct sum of one representative from each isomorphism class of indecomposable projective objects in $\CC$.
There is a $Q^*$-right exact sequence in $\mmod \La^{\rm op}$
    \begin{equation}\label{eq. right. exact. r-cot}        
    \La^*\rt \mathsf{Tr} M\oplus P_0\rt \mathsf{Tr} N\oplus P_1\rt 0,
    \end{equation}
    with $M \in \CC, \ N \in \CT$ and $P_0 \in \mathsf{add} P$ and $P_1$ a projective module (where $P$ as defined in $(2)$). 
\end{enumerate}
\end{definition}

In Below, we show that the image of a cotorsion pair as in Proposition \ref{pro. cot-tor.triple} is a r-cotorsion.

\begin{proposition}\label{Prop. r-cotor-in Mor}
Let $\mathsf{T}$ be a tilting object in $\CP(\La)$. Then,   $(\mathsf{Cok}\big( {}^{\perp}(\mathsf{Cok}\mathsf{T})\big),~~\mathsf{Cok}\big(\mathsf{Cok}\mathsf{T}\big))$ is a r-cotorsion pair. 
\end{proposition}
\begin{proof}
  By Theorem~\ref{pro. cot-tor.triple}, we obtain the cotorsion pair 
$({}^{\perp}\mathsf{Cok}\mathsf{T},\, \mathsf{Cok}\mathsf{T})$ in $\CP$. 
Let $P$ be a maximal projective module such that $
{\Hom_{\Lambda}}\bigl(Q, \mathsf{Cok}(\mathsf{Cok}\mathsf{T})\bigr)=0.$
By ARS-duality in $\CP$, as in the proof of Proposition~\ref{Prp. tily-tau-tilt} we did, the vanishing $
\Ext^1_{\CP}\bigl({}^{\perp}\mathsf{Cok}\mathsf{T},\, \mathsf{Cok}\mathsf{T}\bigr)=0$
implies the required Hom-vanishing in Definition~\ref{def. r-cotorsion-torsion}. 
Let $M$ be a module in $\mmod  \Lambda$. 
Since $({}^{\perp}\mathsf{Cok}\mathsf{T},\, \mathsf{Cok}\mathsf{T})$ is a cotorsion pair in $\CP$, 
there exists a conflation
\[
\lambda:\ 
\mathsf{N}\rightarrowtail \mathsf{L}\twoheadrightarrow \mathsf{X}_M,
\]
where $\mathsf{L} \in {}^{\perp}\mathsf{Cok}\mathsf{T}$ and 
$\mathsf{N} \in \mathsf{Cok}\mathsf{T}$.  Let $\mathsf{L}_c = (Q \to 0)$. Since 
${\Ext^1_{\CP}}(\mathsf{L}_c,\, \mathsf{Cok}\mathsf{T}) = 0,$ Lemma~\ref{lem. vanishing-Hom-Ext} yields $
{\Hom_{\Lambda}}\bigl(Q, \mathsf{Cok}(\mathsf{Cok}\mathsf{T})\bigr)=0.$
It follows that $\mathsf{Cok}\,\lambda$ is a $P$-right exact sequence, 
which is one of the required right exact sequences in the definition. 
In a similar manner, we obtain the second right exact sequence required in the definition.

The required right exact sequence (\ref{eq. right. exact. r-cot}) in the definition follows from this fact that for the object $(\La\rt 0)$ in $\CP$ lies in a conflation 
\[
\mathsf{N}\rightarrowtail \mathsf{M}\twoheadrightarrow (\La\rt 0)
\]
with $\mathsf{M} \in  {}^{\perp}(\mathsf{Cok}\mathsf{T})$ and $\mathsf{N} \in \mathsf{Cok}\mathsf{T}$.
By applying the duality functor $(-)^*$ and then taking cokernel, we get the desired right exact sequence.
\end{proof}
 Let $(\CC, \CT)$ be a r-cotorsion pair, and let $P$ be the same projective module in Definition \ref{def. r-cotorsion-torsion}. We introduce two following subcategories in $\CP$:

 $$\mathsf{r}(\CC)=\mathsf{add}\{ \mathsf{X}_M\mid M \in \CC\}\cup \{(P\rt 0), (\La\st{1}\rt \La)\},$$
 and 
 $$\mathsf{r}(\CT)=\mathsf{add}\{ \mathsf{X}_M\mid M \in \CT\}\cup \{(\La\rt 0), (\La\st{1}\rt \La)\}.$$
\begin{proposition}\label{Prop. r-cotrsion-pair-induced}
Let $(\CC,~ \CT)$ be a r-cotorsion pair in $\mmod \La$.  Then, $(\mathsf{r}(\CC),~ \mathsf{r}(\CT))$ is a cotorsion pair in $\CP(\La)$.   
\end{proposition}
\begin{proof}
Let $(\CC,~ \CT)$ be a r-cotorsion pair in $\mmod \La$  together with projective modules $P$ and $Q$ as defined in Definition \ref{def. r-cotorsion-torsion}. First, we  note that both subcategories  $\mathsf{r}(\CC)$ and $ \mathsf{r}(\CT)$ are closed under direct summands. By ARS-duality in conjunction with Lemma \ref{lem. vanishing-Hom-Ext}, the Hom-vanishing   ${\Hom_{\La}}(\CC,\tau \CT) = 0$ implies the required $\Ext$-vanishing in the definition of a cotorsion pair.

Now, let $\mathsf{M}$ be an object in $\CP.$ Since $(\La \st{1}\rt \La)$ lies in both subcategories $\CC$ and $\CT$, we may assume that $\mathsf{M}_b=0$. Set $M=\mathsf{Cok} \mathsf{M}$. First assume that $\mathsf{M}_c=0$. Hence, $\mathsf{M}=\mathsf{X}_M$. By assumption, there is a $P$-right exact sequence 
$$\la: \ M\rt K\rt L\rt 0$$
with $K \in \CT$ and $L \in \CC.$ In view of Construction \ref{Construction5}, we obtain the following conflation in $\CP$
\[
\lambda':\
\mathsf{M}\rightarrowtail \mathsf{K}\twoheadrightarrow \mathsf{L},
\]
where $\mathsf{Cok} \la'=\la$ and $\mathsf{K}_c, \mathsf{L}_c \in \mathsf{add}(P\rt 0)$. Hence, $\mathsf{K} \in r(\CT)$ and $\mathsf{L} \in r(\CC)$. Thus, the conflation $\la'$ gives us one of the conflations in the definition of a cotorsion pair.

Furthermore,  our assumption implies the following $P$-right exact sequence 
  $$\delta: \ F\rt G\rt M\rt 0$$
  with $G \in \CC$ and $F \in \CT.$ In view of Construction \ref{Construction5}, we get the following conflation in $\CP$
\[
\delta':\
\mathsf{F}\rightarrowtail \mathsf{G}\twoheadrightarrow \mathsf{M}',
\]
 with $\mathsf{Cok} \delta'=\delta$ and  $\mathsf{G}_c, \mathsf{M}'_c \in \mathsf{add}(P\rt 0)$. Clearly, $\mathsf{F} \in r(\CT)$. Since $\mathsf{Cok} \mathsf{M}=\mathsf{Cok} \mathsf{M}'=M$, we have $\mathsf{M}'=\mathsf{M}\oplus (P'\rt 0)$, for some $P' \in \mathsf{add} P.$ Consider the following pullback diagram:
 \begin{equation*} 
\xymatrix{\\	 & \mathsf{F} \ar@{=}[d] \ar@{>->}[r] & \mathsf{U} \ar@{>->}[d]\ar@{->>}[r]
 & (P'\rt 0) \ar@{>->}[d]  & \\
	 & \mathsf{F}  \ar@{>->}[r] & \mathsf{G}
	\ar@{->>}[d] \ar@{->>}[r] &   \mathsf{M}' \ar@{->>}[d]  & \\
  &  & \mathsf{M}
	 \ar@{=}[r] & \mathsf{M}    & \\
&  &   &  & }
\end{equation*}
Since $\mathsf{F}$ and $(P'\rt 0)$ are in $r(\CT)$,  the conflation in the first row implies that $\Ext^1_{\CP}(r(\CC), \mathsf{U})=0.$ From the beginning of the proof, there exists the conflation 
\[
\mathsf{U}\rightarrowtail \mathsf{A}\twoheadrightarrow \mathsf{B}
\]
with $\mathsf{A} \in r(\CT)$ and $\mathsf{B} \in r(\CC)$. Hence, this conflation splits. Thus,  $\mathsf{U}$ is a direct summad of $\mathsf{A}$. This implies  that $\mathsf{U} \in r(\CT)$. Therefore, the conflation in the middle column in the above diagram  yields the second required conflation in the definition of a cotorsion pair.

Consider the case where $\mathsf{M}=\mathsf{M}_c$. Thus, $\mathsf{M}=(A\rt 0)$ for some $A \in \mathsf{add} \La$. By the right exact sequence (\ref{eq. right. exact. r-cot}) and Construction \ref{Construction5}, we obtain the following conflation in $\CP(\La^{\rm op})$:
\[
(0\rt A^*)\rightarrowtail \mathsf{M}^*\oplus (0 \rt P_0^*)\twoheadrightarrow \mathsf{N}^*\oplus (0 \rt P_1^*),
\]
where $\mathsf{M} \in r(\CC)$, $\mathsf{N} \in r(\CT)$, $\mathsf{M}^*_c, \mathsf{N}^*_c \in \mathsf{add} Q^*$, and $P_0 \in \mathsf{add} P$. Applying the duality $(-)^*$ yields the desired conflation ending at $\mathsf{M}$. The remaining conflation holds automatically since $\mathsf{M} \in r(\CT)$. This completes the proof.
\end{proof}

We need to determine which pair of subcategories in $\mmod \La$ corresponds to the torsion pair in the cotorsion-torsion triple 
\[
({}^{\perp}(\mathsf{Cok}\,\mathsf{T}),\ \mathsf{Cok}\,\mathsf{T},\ (\mathsf{Cok}\,\mathsf{T})^{\perp_0})
\]
in $\CP(\La)$, where $\mathsf{T}$ is a tilting object.

\begin{definition}\label{Def. r-torsion-pair}
   A pair $(\CC,\CT)$ of subcategories of $\mmod \La$ 
is called a \emph{r-torsion pair}  if 
$$\CC=\mathsf{Cok}(\mathsf{Cok}\mathsf{T}) \ \ \ \ \text{and} \ \ \ \CT=\mathsf{Cok}((\mathsf{Cok}\mathsf{T})^{\perp_0}),$$
where $\mathsf{T}$ is a tilting object in $\CP(\La)$.
\end{definition}
 It should be emphasized that the outer `'$\mathsf{Cok}$'' in the above definition denotes the application of the cokernel functor, while the inner `'$\mathsf{Cok}$'' refers to taking the cokernel of an admissible morphisms in $\mathsf{add}\mathsf{T}$.

The main challenge lies in relating the vanishing of Hom-spaces in $\CP(\La)$ to Hom-vanishing in $\mmod \La$. For this reason, we define an r-torsion pair in this manner. However, in the next result, we provide some necessary conditions for an r-torsion pair.

\begin{proposition}
Let $(\CC, \CT)$ be an r-torsion pair. Then the following conditions hold:
\begin{itemize}
    \item [$(i)$] $\Hom_{\La}(\CC, \CT)=0;$
    \item [$(ii)$] The projective dimension of any module in $\CT$ is at most one;
    \item [$(iii)$] For any module in $\mmod \La$, there exists a $0$-right exact sequence 
    \[
    K \rt M \rt L \rt 0,
    \]
    where $K \in \CC$ and $L \in \CT.$
\end{itemize}
\end{proposition}

\begin{proof}
By definition, $\CC$ and $\CT$ are induced by the torsion pair $(\mathsf{Cok}\,\mathsf{T}, (\mathsf{Cok}\,\mathsf{T})^{\perp_0})$ in $\CP$, where $\mathsf{T}$ is a tilting object in $\CP$. 

$(i)$ This follows directly from the definition of the torsion pair $(\mathsf{Cok}\,\mathsf{T}, (\mathsf{Cok}\,\mathsf{T})^{\perp_0})$. 

$(ii)$ This follows from Proposition \ref{prop. mono-tor-free}. 

$(iii)$ Let $M$ be a module in $\mmod \La$. The torsion pair $(\mathsf{Cok}\,\mathsf{T}, (\mathsf{Cok}\,\mathsf{T})^{\perp_0})$ yields the following conflation in $\CP$:
\[
\mathsf{A} \rightarrowtail \mathsf{X}_M \twoheadrightarrow \mathsf{B},
\]
where $\mathsf{A} \in \mathsf{Cok}\,\mathsf{T}$ and $\mathsf{B} \in (\mathsf{Cok}\,\mathsf{T})^{\perp_0}$. Since $(\La \rt 0) \in \mathsf{Cok}\,\mathsf{T}$, the orthogonality condition implies $\Hom_{\CP}((\La \rt 0), \mathsf{B}) = 0$, which yields $\mathsf{B}_c = 0$. Applying the cokernel functor on the conflation gives the desired right exact sequence.
\end{proof}

An important feature of an r-torsion pair is that it is uniquely determined by the associated torsion pair in $\CP(\La)$. A short proof is provided below.

\begin{lemma}\label{Lem. r-torsion-bij}
    Let $(\CC, \CT)$ be an r-torsion pair  associated to basic tilting object $\mathsf{T}$, see Definition \ref{Def. r-torsion-pair}. If there exists a basic tilting object $\mathsf{T}'$ such that 
    \[
    \CC =\mathsf{Cok}(\mathsf{Cok}\,\mathsf{T}') \quad \text{and} \quad \CT = \mathsf{Cok}((\mathsf{Cok}\,\mathsf{T}')^{\perp_0}),
    \]
    then $\mathsf{T} \simeq \mathsf{T}'$.
\end{lemma}

\begin{proof}
    Since $\mathsf{Cok}(\mathsf{Cok}\,\mathsf{T}) = \mathsf{Cok}(\mathsf{Cok}\,\mathsf{T}')$, the result follows from the bijections in Theorems \ref{mainTheorem5} and \ref{Thm. the first-bijection}.
\end{proof}

 According to the notions of r-cotorsion and r-torsion,  we can define the following two types of triples of subcategories.

\begin{definition}\label{Def. r-cot-r-tor-triple2}
A triple $(\CC, \DD, \CF)$ of subcategories is called:
\begin{itemize} 
    \item an \emph{r-cotorsion-torsion triple} if $(\CC, \DD)$ is an r-cotorsion pair and $(\DD, \CF)$ is a torsion pair;
    \item an \emph{r-cotorsion-r-torsion triple} if $(\CC, \DD)$ is an r-cotorsion pair and $(\DD, \CF)$ is an r-torsion pair.
\end{itemize}
\end{definition}

We also recall from \cite{BZ} a triple of subcategories $(\CC, \DD, \CF)$ is called a \emph{lw-cotorsion-torsion triple} if $(\CC, \DD)$ is a lw-cotorsion pair and $(\DD, \CF)$ is a torsion pair. 

Now we are ready to state the main theorem in this subsection.

\begin{theorem}\label{Thm. r-cot-r-torsion-supp}
There exist bijections between the following sets.
\begin{itemize}
\item [$(1)$] Isomorphism classes of basic support $\tau$-tilting modules;
\item [$(2)$]  r-Cotorsion-torsion triples; 
\item [$(3)$] r-Cotorsion-r-torsion triples;
\item [$(4)$] lw-cotrsion-torsion triples.
\end{itemize}
\end{theorem}
\begin{proof}
From Propositions \ref{Prop. r-cotor-in Mor} and \ref{Prop. r-cotrsion-pair-induced}, we obtain that r-cotorsion pairs in $\mmod \La$ are in bijection with cotorsion pairs in $\CP$. On the other hand, by Lemma \ref{Lem. r-torsion-bij}, we have a bijection between r-torsion pairs in $\mmod \La$ and torsion pairs in $\CP$. Together, these results establish the bijection between $(2)$ and $(3)$. Furthermore, by Theorem \ref{Thm. the first-bijection} and Proposition \ref{prop. bij-twin-tilting-cotortor}, we obtain the first three bijections in the statement. The bijection between $(1)$ and $(4)$ follows from \cite[Corollary 4.7]{BZ}.

\end{proof}

We remark that the subcategories involved in an r-cotorsion pair determine each other, in contrast to an lw-cotorsion pair (see \cite[Example 4.2]{BZ}). In fact, by Proposition \ref{Prop. r-cotrsion-pair-induced}, an r-cotorsion pair is induced by a cotorsion pair in $\CP(\La)$. Since the subcategories involved in a cotorsion pair determine each other, it follows that this property also holds for an r-cotorsion pair. This suggests that the notion of an r-cotorsion pair might be more appropriate than that of an lw-cotorsion pair.

There are other bijections in Proposition \ref{prop. bij-twin-tilting-cotortor} related to torsion-cotorsion triples in $\CP(\Lambda)$. These kinds of triples also reflect a kind of triples in $\mmod \Lambda$ and then we will get more bijections with the class of basic support $\tau$-tilting modules. We skip it and leave it to the reader to formulate it.

\subsection{Further application}

As an application, we present the subsequent characterizations of a $\tau$-rigid pair that serves as a support $\tau$-tilting module.

\begin{theorem}
Let $(M, P)$ be a $\tau$-rigid pair. Then, the following statements are equivalent.
\begin{itemize}
\item [$(1)$] $(M, P)$ is a support $\tau$-tilting pair;
\item [$(2)$] $\mathsf{Cok} M$ satisfies the following properties:
\begin{itemize}
\item [$\bullet$] It is extension closed under right exact sequences, .i.e., for every right exact sequence $L\rt M\rt N\rt 0$ with $L, N \in \mathsf{Cok}M$, we have $M \in \mathsf{Cok}M$;
\item [$\bullet$] For every morphism $f: M\rt N$ in $\mathsf{Cok}M$ with the following factorization

\[\begin{tikzcd}
	M && N \\
	& D
	\arrow["f", from=1-1, to=1-3]
	\arrow["g", two heads, from=1-1, to=2-2]
	\arrow["h", from=2-2, to=1-3]
\end{tikzcd}\]
 we have $D \in \mathsf{Cok}M.$
\end{itemize}

\end{itemize}

\end{theorem}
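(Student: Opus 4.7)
The implication (1) $\Rightarrow$ (2) will follow by invoking Theorem \ref{Prop. supp-tau-funct-fact}(1), which identifies $\mathsf{Cok}M$ with the torsion class $\mathsf{Fac}M$. The two bullets are then immediate consequences of the closure of $\mathsf{Fac}M$ under quotients and extensions: in a right exact sequence $L \to Y \to N \to 0$ with $L, N \in \mathsf{Fac}M$, the image of $L \to Y$ is a quotient of $L$ and thus lies in $\mathsf{Fac}M$, so $Y$ is an extension of two objects in $\mathsf{Fac}M$; and in a factorization $X \twoheadrightarrow D \to Y$ with $X, Y \in \mathsf{Fac}M$, the middle object $D$ is itself a quotient of $X$.

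For (2) $\Rightarrow$ (1), the key observation I will exploit is that the second bullet already forces $\mathsf{Cok}M$ to be closed under quotients: given any surjection $X \twoheadrightarrow D$ with $X \in \mathsf{Cok}M$, the zero morphism $X \to 0$ factors as $X \twoheadrightarrow D \to 0$ with $0 \in \mathsf{Cok}M$, so the second bullet yields $D \in \mathsf{Cok}M$. Combined with extension closure (already supplied by Proposition \ref{prop. exten-cokernel-maximalproj}(2), and also immediate from the first bullet specialized to short exact sequences), this makes $\mathsf{Cok}M$ a torsion class. Since $M \in \mathsf{Cok}M \subseteq \mathsf{Fac}M$ always, quotient closure forces $\mathsf{Cok}M = \mathsf{Fac}M$.

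Next I will use the classical Auslander--Smal\o{} result that $\mathsf{Fac}M$ is functorially finite whenever $M$ is $\tau$-rigid, and apply Theorem \ref{Prop. supp-tau-funct-fact}(2) to obtain a basic support $\tau$-tilting module $N$ with $\mathsf{Cok}N = \mathsf{Fac}M$. That $M \in \mathsf{add}N$ will follow from the $\Ext$-projectivity of $M$ in $\mathsf{Fac}N$ combined with the AIR characterization of $\Ext$-projectives of a functorially finite torsion class. The reverse containment $N \in \mathsf{add}M$ I expect to obtain from Proposition \ref{prop. exten-cokernel-maximalproj}(4): $M$ is a relative $\Ext$-progenerator of $\mathsf{Cok}M$, so $N$ has a presentation by $\mathsf{add}M$ that splits by $\Ext$-projectivity of $N$ in $\mathsf{Fac}N$.

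The main obstacle I anticipate is the final step of matching the projective part $P$ with the AIR-complement $Q$ of $N=M$, since condition (2) references only $\mathsf{Cok}M$ and not $P$; the hypothesis $\Hom_\Lambda(P, M) = 0$ together with counting constraints should force $|P| = |Q|$, but the details will require some care. A cleaner alternative I would explore is to lift the entire argument to $\CP(\Lambda)$ via Lemma \ref{Lem. tilt.cotilt.prep}: namely, to show directly that hypothesis (2) yields the inclusion $\overline{M}^\perp \subseteq \mathsf{Cone}(\overline{M}, \overline{M})$ in $\CP$, which together with rigidity of $\overline{M}$ makes it a tilting object, and then to invoke Theorem \ref{Thm. the first-bijection} to conclude.
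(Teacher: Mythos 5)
Your $(1)\Rightarrow(2)$ is the same as the paper's: both reduce immediately to Theorem \ref{Prop. supp-tau-funct-fact}(1), which gives $\mathsf{Cok}M=\mathsf{Fac}M$, and read off the two bullets from quotient- and extension-closure of a torsion class. For $(2)\Rightarrow(1)$ you take a genuinely different route. The paper lifts the hypothesis back to $\CP(\La)$: it forms $\CC=\mathsf{add}\{\mathsf{X}_N\mid N\in\mathsf{Cok}M\}\cup\{(\La\rt 0),(\La\st{1}\rt\La)\}$, uses the two bullets to check that $\CC$ is ICE-closed (extension-closure via the first bullet, image/cokernel-closure via the second), and then concludes through the ICE-closed/rigid/tilting bijections of Sections 5 and 6. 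You stay entirely in $\mmod\La$: your observation that the second bullet applied to the zero morphism $X\rt 0$ (with $0\in\mathsf{Cok}M$) forces $\mathsf{Cok}M$ to be closed under \emph{arbitrary} quotients is correct and is the decisive simplification. Combined with extension-closure (Proposition \ref{prop. exten-cokernel-maximalproj}(2)) and $M\in\mathsf{Cok}M\subseteq\mathsf{Fac}M$, this gives $\mathsf{Cok}M=\mathsf{Fac}M$, a functorially finite torsion class since $M$ is $\tau$-rigid. The identification $\mathsf{add}M=\mathsf{P}(\mathsf{Fac}M)$ then follows as you indicate: one inclusion from Ext-projectivity of the $\tau$-rigid module $M$ in $\mathsf{Fac}M$, the other by splitting a presentation $0\rt K\rt M_0\rt N\rt 0$ of an Ext-projective $N$, where $K$, being a quotient of some $M_1\in\mathsf{add}M$, already lies in the torsion class (this is cleaner than invoking the relative Ext-progenerator of Proposition \ref{prop. exten-cokernel-maximalproj}(4), whose splitting behaviour is only formulated for $P$-right exact sequences). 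Your argument is more elementary and more explicit at the final step than the paper's, which is quite compressed after establishing ICE-closedness; the paper's version has the advantage of situating the statement inside its general ICE framework.

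Both arguments share the same loose end, and you are right to flag it: condition $(2)$ does not see $P$, so one can only conclude that $M$ is a support $\tau$-tilting \emph{module}, i.e.\ that $(M,Q)$ is a support $\tau$-tilting pair for the maximal projective $Q$ with $\Hom_{\La}(Q,M)=0$; if $P$ is a proper summand of that $Q$ the pair $(M,P)$ itself fails $|M|+|P|=|\La|$. This is an imprecision in the statement (the paper's own proof ignores it, whereas Theorem \ref{final}(2) is worded more carefully), not a defect of your argument, and it cannot be repaired by the counting you suggest. Apart from this inherited issue your proposal is sound.
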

\begin{proof}
 Assume that $(M, P)$ is a support $\tau$-tilting pair. By Proposition \ref{Prop. cok=fac}, we have  $\mathsf{Fac}M=\mathsf{Cok}M$. Hence, $\mathsf{Cok}M$ is closed under quotients, which trivially implies $(2)$. Conversely,  Assume \( \mathsf{Cok}M \) satisfies the conditions in (2).
Define \[
\CC = \mathsf{add}\{ \mathsf{X}_M \mid M \in \mathsf{Cok}M \} \cup \{ (\La \to 0), (\La \xrightarrow{1} \La) \}.
\]
We can verify that \( \CC \) is an ICE-closed subcategory in \( \CP \). We will show that it is closed under extensions; closed under images and cokernels of admissible morphisms follows similarly. Let \[
 \mathsf{X} \rightarrowtail \mathsf{Y} \twoheadrightarrow \mathsf{Z}
\] be a conflation in \( \CC \). Taking the cokernel of this sequence yields the right exact sequence
\[
 \mathsf{Cok}\mathsf{X} \to \mathsf{Cok}\mathsf{Y} \to \mathsf{Cok}\mathsf{Z} \to 0.
\]
By our assumption, \( \mathsf{Cok}\mathsf{Y} \) belongs to \( \mathsf{Cok}M \) since \( \mathsf{Cok}\mathsf{X}, \mathsf{Cok}\mathsf{Z} \in \mathsf{Cok}M \). Hence, \( \mathsf{Y}_0 \oplus \mathsf{Y}_a \) lies in \( \CC \). Since \( \mathsf{add}(\La \to 0) \oplus (\La \xrightarrow{1} \La) \subseteq \CC \), we do not need to worry about whether the direct summand \( \mathsf{Y}_b \oplus \mathsf{Y}_c \) is in \( \CC \) or not. Now, since $\mathsf{Cok}M=\mathsf{Cok}\CC$, this completes the proof.
\end{proof}

In classical tilting theory, it is known that a partial tilting module \(M\) is considered tilting if there exists a short exact sequence \(0 \to \La \to M_0 \to M_1 \to 0\), where \(M_0, M_1 \in \mathsf{add}M\). In the following, we will explore a similar result in the context of support $\tau$-tilting theory.

\begin{theorem}\label{final}
Let $(M, P)$ be a $\tau$-rigid pair. Then, the following statements hold.
\begin{itemize}
 \item [$(1)$]If $(M, P)$ is  a support $\tau$-tilting pair, then there exists the following exact sequence

 $$0 \rt \Omega^2_{\La}(M_1)\oplus Q\rt \Omega^2_{\La}(M_0)\rt \La\rt M_1\rt M_0\rt 0,$$

 where $M_1, M_0 \in  \mathsf{add}M$, and $Q \in \mathsf{add}P$.
 \item [$(2)$] If there exists a right exact sequence $$\La \to N_1 \to N_0 \to 0,$$
 where $N_1, N_0 \in  \mathsf{add}M$,  then the pair \((M, P)\) can be completed to support \(\tau\)-tilting pair by adding a projective module to the second component. Specifically, there exists a projective module \(P'\) such that \((M, P \oplus P')\) is a support \(\tau\)-tilting pair.

 \end{itemize}
\end{theorem}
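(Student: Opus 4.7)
The plan for part~(1) is to translate the support $\tau$-tilting pair $(M,P)$ into the associated tilting object $\mathsf{T} = \mathsf{Min}(M,P) = X_M \oplus (P\rightarrow 0) \oplus (\La\stackrel{1}{\rightarrow}\La)$ in $\CP$ via Theorem~\ref{Thm. the first-bijection}, and then use that $\mathsf{T}$ is also cotilting by Lemma~\ref{Lem. cot=tilt}. Applying the cotilting condition to the injective object $(\La\rightarrow 0)\in\CP$ yields a conflation $\mathsf{T}^1 \rightarrowtail \mathsf{T}^0 \twoheadrightarrow (\La\rightarrow 0)$ with $\mathsf{T}^0, \mathsf{T}^1 \in \mathsf{add}\mathsf{T}$. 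Decomposing each $\mathsf{T}^i \simeq X_{M_i}\oplus (Q_i\rightarrow 0)\oplus (\La\stackrel{1}{\rightarrow}\La)^{n_i}$ with $M_i\in\mathsf{add}M$ and $Q_i\in\mathsf{add}P$, regarding the conflation as a short exact sequence of two-term projective complexes, and taking the long exact sequence in cohomology (noting that $(\La\stackrel{1}{\rightarrow}\La)$ is acyclic, $X_{M_i}$ contributes $\Omega^2_{\La}(M_i)$ in degree $-1$ and $M_i$ in degree $0$, while $(Q_i\rightarrow 0)$ contributes $Q_i$ only in degree $-1$) produces
\[
0 \rightarrow \Omega^2_{\La}(M_1)\oplus Q_1 \rightarrow \Omega^2_{\La}(M_0)\oplus Q_0 \rightarrow \La \rightarrow M_1 \rightarrow M_0 \rightarrow 0.
\]
Setting $Q := Q_1$, it remains to arrange $Q_0 = 0$ by choosing a suitably minimal right $\mathsf{add}\mathsf{T}$-approximation $\mathsf{T}^0\twoheadrightarrow(\La\rightarrow 0)$.

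For part~(2), my plan is to construct the required projective $P'$ from the given right exact sequence. By Theorem~\ref{Thm. rigid. tau-pair}, $(M,P)$ corresponds to the rigid object $\overline{M} = X_M \oplus (P\rightarrow 0) \oplus (\La\stackrel{1}{\rightarrow}\La)$ in $\CP$, so by Theorem~\ref{Thm. the first-bijection} it suffices to extend $\overline{M}$ to a tilting object in $\CP$ by adding only summands of type~$(c)$. Applying Construction~\ref{Construction5} to $\La\rightarrow N_1\rightarrow N_0\rightarrow 0$ yields a conflation $(0\rightarrow\La) \rightarrowtail \mathsf{U}^0 \twoheadrightarrow \mathsf{U}^1$ in $\CP$ whose terms, after splitting off acyclic pieces, lie in $\mathsf{add}\bigl(X_{N_1}\oplus X_{N_0}\oplus (\La\stackrel{1}{\rightarrow}\La)\oplus (Q\rightarrow 0)\bigr)$ for some projective $Q$ encoded by $Q_0(\lambda)$ and $Q_1(\lambda)$ of the construction. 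I then take $P'$ to be the maximal direct summand of $Q$ satisfying $\Hom_{\La}(P', M) = 0$, and verify that $\overline{M}\oplus(P'\rightarrow 0)$ is both rigid (since $(P'\rightarrow 0)$ is injective in $\CP$, and $\Ext^1_{\CP}((P'\rightarrow 0),\overline{M})=0$ reduces via ARS duality and Lemma~\ref{lem. istau-tilting objects} to the chosen Hom-vanishing) and satisfies the tilting condition for the projective $(0\rightarrow\La)$ via the constructed conflation (the defining conflation for $(\La\stackrel{1}{\rightarrow}\La)$ is trivial, as it already lies in $\mathsf{add}\overline{M}$). Invoking Theorem~\ref{Thm. the first-bijection} in reverse then yields that $(M, P\oplus P')$ is support $\tau$-tilting.

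The principal technical obstacle lies in part~(1): although minimality of the right approximation eliminates summands mapping trivially to $(\La\rightarrow 0)$, the Hom-space $\Hom_{\CP}((Q_0\rightarrow 0),(\La\rightarrow 0)) \simeq \Hom_{\La}(Q_0, \La)$ is generically nonzero, so type-$(c)$ summands can legitimately appear in a minimal right approximation. Overcoming this will likely require exchanging $(Q_0\rightarrow 0)$-summands for $X$-type summands via the identification $\Hom_{\CP}((Q_0\rightarrow 0), X_{M_i}) \simeq \Hom_{\La}(Q_0, \Omega^2_{\La}(M_i))$, factoring any surviving contribution through the $X_{M_i}$-component of $\mathsf{T}^0$ and absorbing $Q_0$ into the syzygy of $M_0$. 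In part~(2), a parallel subtlety is ensuring that restricting $Q$ to its Hom-vanishing submodule $P'$ preserves the validity of the conflation provided by Construction~\ref{Construction5}; this should follow from the structural constraints on $Q$ inherent in the construction, together with an application of the Bongartz-type completion Lemma~\ref{BongartzLemma} to fill in any deficit by type-$(c)$ summands only.
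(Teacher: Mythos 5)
Your part~(1) runs the argument at the wrong end of the conflation, and the obstacle you flag is genuine and is not removable within your setup. The paper applies the \emph{tilting} condition to the projective object $(0\rt \La)$, obtaining $(0\rt\La)\rightarrowtail\mathsf{M}_0\twoheadrightarrow\mathsf{M}_1$ with $\mathsf{M}_0,\mathsf{M}_1\in\mathsf{add}\overline{M}$, pulls back along the inclusion $(\mathsf{M}_1)_0\oplus(\mathsf{M}_1)_a\hookrightarrow\mathsf{M}_1$, and then takes the snake lemma. The reason this works is an asymmetry in the target sequence: the projective correction $Q$ is allowed only on the $\Omega^2_{\La}(M_1)$ term, which in the long exact sequence comes from the \emph{middle} term of the conflation, whereas the term that must be a clean second syzygy comes from the \emph{end} term. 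An end term can be replaced by its minimal-presentation part via a pullback without disturbing the subobject $(0\rt\La)$, which is exactly how the paper kills the unwanted projective. In your conflation $\mathsf{T}^1\rightarrowtail\mathsf{T}^0\twoheadrightarrow(\La\rt 0)$ the roles are reversed: the term that must be purified of type-$(c)$ summands is the middle term $\mathsf{T}^0$, which cannot be modified by a pullback or pushout while keeping both ends fixed; and, as you yourself note, $\Hom_{\CP}((Q_0\rt 0),(\La\rt 0))\simeq\Hom_{\La}(Q_0,\La)\neq 0$, so minimality of the approximation does not expel such summands either. Your proposed ``exchange'' of $(Q_0\rt 0)$-summands for $X$-type summands is not carried out, so part~(1) is not proved; switching to the conflation starting at $(0\rt\La)$ fixes this structurally.

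Part~(2) is strategically the same as the paper's: the paper builds the conflation $(0\rt\La)\rightarrowtail(P_1\rt\La\oplus P_0)\twoheadrightarrow(P_1\rt P_0)$ by an explicit pullback along the minimal presentation of $N_0$, which is a hands-on instance of your appeal to Construction~\ref{Construction5}. But your last two steps do not cohere. If you shrink $Q$ to its Hom-vanishing part $P'$, the conflation you constructed no longer has all of its terms in $\mathsf{add}\bigl(\overline{M}\oplus(P'\rt 0)\bigr)$, so it no longer witnesses the tilting condition for $(0\rt\La)$; and Lemma~\ref{BongartzLemma} gives no control over the type of the complement it produces, so it cannot be invoked to ``fill the deficit by type-$(c)$ summands only.'' The paper does the opposite: it keeps the entire type-$(c)$ part $\mathsf{X}_c=(Q\rt 0)$ of the middle term, uses the long exact sequence of the constructed conflation to show that adjoining $(Q\rt 0)$ preserves rigidity (equivalently, that the required $\Hom$-vanishing against $Q$ holds for the $Q$ the construction produces), and only at the end concludes $P\in\mathsf{add}Q$ by maximality. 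You should prove the vanishing for the $Q$ you are handed rather than discard the part of $Q$ for which you cannot prove it.
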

\begin{proof}

 $(1)$ Assume that $M$ is support $\tau$-tilting module. We know that $\overline{M}$ is a tilting object in $\CP$, see Theorem \ref{Thm. the first-bijection}. Hence, by definition of a tilting object,  there exists  a conflation
$$(0 \rt \La)\rightarrowtail \mathsf{M}_0\twoheadrightarrow \mathsf{M}_1,$$
where $\mathsf{M}_0, \mathsf{M}_1 \in \mathsf{add}\mathsf{M}$.
Set \(\mathsf{M}'_1 = (\mathsf{M}_1)_0 \oplus (\mathsf{M}_1)_a\). Consider the following pull-back diagram:\[
\xymatrix{
 & (0 \to \La) \ar@{=}[d] \ar@{>->}[r] & \mathsf{M}'_0 \ar[d] \ar@{>>}[r] & \mathsf{M}'_1 \ar[d] &  \\
 & (0 \to \La) \ar@{>->}[r] & \mathsf{M}_0 \ar@{>>}[r] & \mathsf{M}_1  &
}
\]
where \(\mathsf{M}'_1 \to \mathsf{M}_1\) is the inclusion map. Setting \(\mathsf{M}'_0 = (P_1 \xrightarrow{f} P_0)\) and \(\mathsf{M}'_1 = (Q_1 \xrightarrow{g} Q_0)\), the conflation in the first row of the above diagram induces the following commutative diagram in \(\mmod \La\):
\[
\xymatrix{
0 \ar[r] & 0 \ar[d] \ar[r] & P_1 \ar[d]^{f} \ar[r] & Q_1 \ar[d]^{g} \ar[r] & 0 \\
0 \ar[r] & \La \ar[r] & P_0 \ar[r] & Q_0 \ar[r] & 0
}
\]
Applying the snake lemma to the above diagram gives us the desired exact sequence in (1). Note that \(\mathsf{M}'_1\) has no direct summand of form (b) or (c), hence it provides us a minimal projective presentation for \(M_1 = \mathsf{Cok} \mathsf{M}_1'\). Also, since \((\mathsf{M}_0')_c \in \mathsf{add}(P \to 0)\), we deduce that the projective module \(Q\) appearing in the desired exact sequence lies in \(\mathsf{add} P\).

$(2)$ Assume that there is a right   exact sequence
\begin{equation}\label{seq5sect1}
\La\st{f}\rt N_1\st{g}\rt N_0\rt 0
\end{equation}
with $N_1, N_0 \in \mathsf{add}M.$ Let $P_1\st{h}\rt P_0\st{d}\rt N_0\rt 0$ be a minimal projective presentation. Consider the pullback of the sequence (\ref{seq5sect1}) along $d$, and complete
it to the following commutative diagram with exact rows and columns:

{\small $$\xymatrix{&&0\ar[d]&0\ar[d]&\\	 &  & \Omega_{\La}(N_0) \ar[d]^j
\ar@{=}[r] &  \Omega_{\La}(N_0)\ar[d]^i & \\
	0 \ar[r] & \La\ar@{=}[d] \ar[r]^{\left[\begin{smallmatrix} 1\\  0\end{smallmatrix}\right]} & \La\oplus P_0
	\ar[d]^{[f~~b]} \ar[r]^{\left[\begin{smallmatrix} 0 & 1\end{smallmatrix}\right]} & P_0 \ar[d]^d \ar[r]\ar@{.>}[dl]_b & 0\\
 & \La \ar[d] \ar[r]^f & N_1
	\ar[d] \ar[r]^g & N_0 \ar[d] \ar[r] & 0\\
& 0  & 0  & 0 & }
$$}
\noindent
Here $i$ denotes the inclusion in the factorization $h:P_1\st{l}\rt \Omega_{\La}(N)\st{i}\rt P_0.$ By using the above commutative diagram, we can build the following conflation
\begin{equation}\label{eq.lem55}
(0 \rt \La) \st{(0~~\left[\begin{smallmatrix} 1\\  0\end{smallmatrix}\right] )}\rightarrowtail (P_1\st{jl}\rt \La \oplus P_0 ) \st{(1~~[0~~1])}\twoheadrightarrow (P_1\st{h}\rt P_0)
\end{equation}
\noindent
Set $\mathsf{X}=(P_1\st{jl}\rt \La \oplus P_0 )$ and $\mathsf{N}=(P_1\st{h}\rt P_0)$.  The same proof as in Proposition \ref{Prp. tily-tau-tilt} shows that $\Ext^1_{\CP}(\mathsf{N}, \mathsf{N})=0$. Consequently, $\mathsf{N}'=\mathsf{N}\oplus (\La\st{1}\rt \La)$ is a rigid object.  Using the long exact sequence associated with conflation \ref{eq.lem55}, we can infer that $\Ext^1_{\La}(\mathsf{X}_c, \mathsf{N}')=0$.  Since $\mathsf{Cok}\mathsf{X}=N_1$, we have $\mathsf{N}_0\oplus \mathsf{N}_a \in \mathsf{add}\mathsf{N}$. Let $\mathsf{N}''=\mathsf{N}\oplus\mathsf{X}_c\oplus(\La\st{1}\rt \La)$. Finally, $\mathsf{N}''$ is a rigid object in $\CP$,  and based on conflation \ref{eq.lem55}, for every projective object in $\CP$ we have the desired conflation in $\CP$ for the definition of a tilting object. Hence, $\mathsf{N}''$  is a tilting object in $\CP$. According to Theorem \ref{Thm. the first-bijection}, $(\mathsf{Cok}\mathsf{N}'', Q)=(M, Q)$, where $\mathsf{X}_c=(Q\rt  0)$,  is a support $\tau$-tilting module. Since $Q$ is a maximal projective module such that ${\Hom}_{\La}(Q, M)=0$, it follows that  $P\in \mathsf{add}Q$. This completes the proof.
\end{proof}

\section*{Acknowledgments}
The research of the second author was in part supported by a grant from IPM (No. 1403160416). The work of the second author is
based upon research funded by Iran National Science Foundation (INSF) under project No. 4001480. The third author is partially supported by NSFC Grant No. 12271249.

\end{document}